\documentclass[11pt, reqno]{amsart}

\usepackage[utf8]{inputenc}
\usepackage[T1]{fontenc}
\usepackage{lmodern}
\usepackage{amsmath, amssymb, amsthm}
\usepackage{mathabx}
\usepackage[colorlinks=true, linkcolor=black, citecolor=blue, urlcolor=blue]{hyperref}
\usepackage{todonotes}

\usepackage{geometry}
\usepackage{mathrsfs}
\usepackage{enumerate}
\usepackage{cite}
\usepackage{algorithm}
\usepackage{cleveref}
\usepackage{graphicx}
\usepackage{float} % für [H]
\usepackage[noend]{algpseudocode}
\usepackage{tikz}
\usetikzlibrary{arrows.meta, positioning, fit,backgrounds}

\newtheorem{theorem}{Theorem}[section]
\theoremstyle{definition}
\newtheorem{lemma}[theorem]{Lemma}
\theoremstyle{definition}
\newtheorem{corollary}[theorem]{Corollary}
\theoremstyle{definition}
\newtheorem{proposition}[theorem]{Proposition}
\newtheorem{assumption}{A}[section]
\theoremstyle{definition}
\newtheorem{definition}[theorem]{Definition}
\newtheorem{example}[theorem]{Example}
\theoremstyle{remark}
\newtheorem{remark}[theorem]{Remark}

\title[Optimization on Weak Riemannian Manifolds]{Optimization on Weak Riemannian Manifolds}

\newcommand{\R}{\mathbb{R}}
\newcommand{\SSS}{\mathbb{S}}
\newcommand{\ev}{\mathrm{ev}}
\newcommand{\Hess}{\mathrm{Hess}}
\DeclareMathOperator{\grad}{grad}
\newcommand{\Dt}{\frac{\mathrm{D}}{\mathrm{d}t}}
\newcommand{\dt}{\frac{\mathrm{d}}{\mathrm{d}t}}

\author{Valentina Zalbertus}
\address{Georg-August-University Göttingen, Institute for Applied and Numerical Mathematics, Lotzestr.~16-18, 37083 Göttingen}
\email{v.zalbertus@stud.uni-goettingen.de}

\author{Max Pfeffer}
\address{Georg-August-University Göttingen, Institute for Applied and Numerical Mathematics, Lotzestr.~16-18, 37083 Göttingen}
\email{m.pfeffer@math.uni-goettingen.de}

\author{Alexander Schmeding}
\address{Norwegian University of Science and Technology,
Department of Mathematical Sciences,
Alfred Getz' vei 1, Trondheim}
\email{alexander.schmeding@ntnu.no}

\date{\today}
\subjclass[2020]{49K27, 58B20, 58C20, 90C48, 58D30, 49Q10}
\keywords{(weak) Riemannian manifold, infinite-dimensional optimization, first-order conditions, variational analysis, shape analysis, shape optimization}

\begin{document}

\begin{abstract}
Riemannian structures on infinite-dimensional manifolds arise naturally in shape analysis and shape optimization. These applications lead to optimization problems on manifolds which are not modeled on Banach spaces. The present article develops the basic framework for optimization via gradient descent on weak Riemannian manifolds leading to the notion of a Hesse manifold. Further, foundational properties for optimization are established for several classes of weak Riemannian manifolds connected to shape analysis and shape optimization.  
\end{abstract}

\maketitle

\tableofcontents

\section{Introduction}

In recent years, standard first and second order methods from continuous optimization in euclidean space have been generalized to Riemannian manifolds, thus kick-starting the very active field of Riemannian optimization. In particular, much research has been done for matrix manifolds \cite{absil2008optimization,Bou23}. Even nonsmooth optimization on smooth Riemannian manifolds has been studied extensively \cite{Chen2020,Si2024}. In higher dimensions, it has been recognized that tensor trees form Riemannian manifolds, allowing for the adaptation of methods on matrix manifolds \cite{Kressner2014}. 

However, things are much less clear when it comes to Riemannian manifolds of {\em infinite} dimension. 
%These are observed, for example, in shape optimization \cite{???} or ???. 
For the special case of Hilbert manifolds, optimization using gradient descent is classical, see, e.g., the literature overview in \cite{SaZ04}. There are natural geometric applications for gradients and their flows on Hilbert (sub-)manifolds: morse theory \cite{Pal63,PaS64}, energy functions \cite{DaF25} for knot deformations, optimal transport on Wasserstein space (see e.g. \cite{Otto01,Lott07,NEURIPS2021} for discussions). Beyond Hilbert manifolds gradient descent techniques typically use conjugate gradients in (reflexive) Banach spaces, see e.g. \cite{Fee20,FaM20,T25}.

In the present article we discuss basic theory for optimization on infinite-dimensional manifolds using gradients and Hessians beyond the setting of Hilbert manifolds. One of several challenges arising in the passage to infinite-dimensions is the split between different regimes of Riemannian geometry: Hilbert manifolds admit {\em strong} Riemannian metrics but manifolds modeled on more general spaces only admit {\em weak} Riemannian metrics, see \cite{Sch23}.

For the strong Riemannian metrics, the theory develops along the finite-dimensional lines, see e.g. \cite{Pal63,Eli74,Kli95}. Since infinite-dimensional manifolds are not locally compact, extra conditions (e.g. Palais-Smale condition (C), \cite{PaS64}) are required to ensure convergence of the gradient sequences. Second order theory using the Riemannian Hessian gets more involved on Hilbert manifolds.

Beyond Hilbert manifolds, every Riemannian metric is necessarily a weak Riemannian metric, i.e., the induced inner products on the tangent spaces are only continuous and do not induce the native topology. Even on an open subset of an infinite-dimensional Hilbert space, the inner product induced by a weak Riemannian metric is in general not equivalent to the Hilbert space product of the model space. Weak Riemannian metrics arise in many applications. We list several settings where gradients, gradient flows and questions from optimization are of central interest in an infinite-dimensional setting:
\begin{itemize}
\item As pioneered by V.I. Arnold, certain partial differential equations (PDEs) lift to geodesic equations on manifolds of Sobolev mappings (cf. \cite[Chapter 7]{Sch23}). These are Hilbert manifolds with weak Riemannian metrics, cf. e.g. \cite{Otto01}.
\item Shape analysis studies invariant metrics and flows on weak Riemannian manifolds of mappings and diffeomorphism groups, cf. e.g.  \cite{MM07,Mic15,BBM14,MMM13,You19}. Here optimization is relevant in large deformation diffeomorphic metric matching (LDDMM), \cite{Tro98}. See e.g. \cite{BKK23} for a concrete example involving the gradient flow.
\item Shape optimization studies gradients for weak Riemannian metrics on infinite-dimensional manifolds, see e.g. \cite{LaW25,LaPaW25}.
\item (time-)evolving embedded manifolds and evolution equations on them lead to gradient flows on weak Riemannian manifolds. The curve-shortening flow studied by Gage and Hamilton and related flows are of this type, cf. \cite{GaH86,SWW23}
\end{itemize}
The state of the art to treat these problems is to employ one of the following strategies:
Treat the qualitative behavior of gradient flows. For example for time-evolving and shape manifolds, development of singularities of the gradient flows and geodesic equations are studied without directly employing numerical methods, \cite{GaH86,Otto01}. 

For optimization schemes base on infinite-dimensional manifolds, there are two main approaches: In many relevant examples, the infinite-dimensional gradient equations can be translated to finite-dimensional (partial) differential equations. These are then numerically solved using PDE methods (e.g. \cite{BaCaKaKaNaP24,LaW25,LaPaW25}). In the Hilbert manifold setting, discretization of the equations are applied together with conditions assuring convergence and convergence rate, see e.g. \cite{Eli74,Pal63,SaZ04}. These techniques have been generalized to Banach manifolds (e.g. \cite{FaM20,Fee20,Tro98,DaF25,T25}) using weaker notions of gradients and dualities not necessarily induced by (weak) Riemannian metrics. 
These approaches either require strong settings (strong metrics, Hilbert manifolds) or exploit connections to finite dimensional geometry for the discretization and computation of the descent scheme. To the best of our knowledge, a general investigation of basic optimization algorithms, and in particular discretization, for weak Riemannian manifolds is so far missing.

One aim of the present article is to provide an introduction to basic optimization techniques on infinite-dimensional manifolds in the weak setting. We highlight pitfalls and challenges arising on Riemannian manifolds beyond the Hilbert setting. 
Further, fundamental optimality conditions and convergence results for optimization on weak Riemannian manifolds are provided. We will not tackle the more involved question of discretization of infinite-dimensional manifolds. This is postponed to later work and we demonstrate the algorithms using symbolic computations which avoid discretization. While much of the classical intuition from finite-dimensional optimization (as presented in \cite{Bou23}) carries over, the absence of the Hilbert/Banach space structure makes it a priori unclear, in which sense standard optimality conditions generalize to weak Riemannian manifolds.

\begin{theorem}[First-Order Optimality]
    Let $f\colon M\to \R$ be continuously differentiable on a weak Riemannian manifold $M$. Then every local minimizer $p\in M$ satisfies
    \begin{equation*}
        \nabla f(p) = 0,
    \end{equation*}
    where $\nabla f$ denotes Riemannian gradient of $f$.
\end{theorem}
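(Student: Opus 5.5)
The plan is to reduce the statement to a one-dimensional calculus fact along curves through $p$, and then use the defining property of the Riemannian gradient together with the definiteness of the weak metric. The theorem implicitly assumes that $\nabla f(p)$ exists. On a weak Riemannian manifold, $g_p$ only induces an injective map $T_pM \to T_p^*M$, not a surjective one, so the gradient need not exist. I will therefore read the hypothesis as: $\nabla f(p)$ is the (unique, by injectivity) vector in $T_pM$ with
\begin{equation*}
g_p(\nabla f(p), v) = df(p)[v] \quad \text{for all } v\in T_pM .
\end{equation*}
Uniqueness follows because $g_p(w,\cdot)=0$ forces $w=0$, since $g_p$ is positive definite.

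\textbf{Step 1: show $df(p)=0$.} Fix $v\in T_pM$ and choose a manifold chart $\varphi\colon U\to \varphi(U)\subseteq E$ around $p$, where $E$ is the locally convex model space. Set $c(t)=\varphi^{-1}(\varphi(p)+t\,T_p\varphi(v))$ for $|t|$ small. Because $\varphi(U)$ is open, $c$ is defined on a small interval and is a $C^1$ curve with $c(0)=p$ and $\dot c(0)=v$. The composition $h=f\circ c$ is then $C^1$ on an interval around $0$, by the chain rule for Bastiani/Michal--Bastiani differentiability in locally convex spaces, and $h'(0)=df(p)[v]$. Since $p$ is a local minimizer and $c$ is continuous, $c(t)$ lies in the neighbourhood of $p$ where $f\ge f(p)$ for all small $|t|$. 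Hence $h$ has a local minimum at $0$, and the real Fermat rule gives $h'(0)=0$. We conclude $df(p)[v]=0$ for every $v$.

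\textbf{Step 2: conclude $\nabla f(p)=0$.} Take $v=\nabla f(p)$. Then $g_p(\nabla f(p),\nabla f(p)) = df(p)[\nabla f(p)]=0$, and positive definiteness of $g_p$ gives $\nabla f(p)=0$. Equivalently, $\nabla f(p)=0$ represents the zero functional, and the representing vector is unique.

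\textbf{Main obstacle.} The main subtlety is not analytic but foundational: making sure that the differentiability notion used for $f$ (continuous differentiability in the locally convex sense) supports the chain rule along the curve $c$ and gives $h'(0)=df(p)[v]$. In Bastiani calculus this holds by definition: the directional derivative of $f\circ\varphi^{-1}$ at $\varphi(p)$ in direction $T_p\varphi(v)$ exists and is the limit of the difference quotient along exactly this affine line. So no completeness or normability of the model space is needed.

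The one point to flag is that the statement is only meaningful when the gradient exists. Without that assumption, Step 1 still gives $df(p)=0$, and this is the natural replacement for the first-order condition.
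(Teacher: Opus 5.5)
Your proof is correct and takes essentially the paper's route. In Proposition \ref{prop: critical point derivative}, the paper shows that a local minimizer is critical via the one-dimensional argument along curves through $p$ (deferring to Boumal's finite-dimensional proof), so $D_pf=0$, and then gets $\nabla f(p)=0$ from nondegeneracy of $g_p$. Your Steps 1 and 2 do exactly this, only spelling out the chart-line curve and the Bastiani chain rule, and you state explicitly the existence assumption on the gradient that the paper encodes as ``gaf''.
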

This recovers the familiar necessary condition from finite-dimensional optimization~\cite{Bou23}.
To extend first-order optimality conditions to algorithms, we show that under an additional assumption ensuring sufficient structure on weak Riemannian manifolds, the classical finite-dimensional convergence result for the Riemannian gradient descent \cite{Bou23} carries over to our present setting.
\begin{theorem}
    All accumulation points of the sequence of iterates $(p_n)_{n\in \mathbb{N}}$ generated by the Riemannian descent algorithm are critical points, and 
    \begin{equation*}
        \lim \limits_{n\to \infty}\vvvert \nabla f(p_n)\vvvert = 0.
    \end{equation*}
where $\vvvert \cdot \vvvert$ is the norm induced by the weak Riemannian metric.
\end{theorem}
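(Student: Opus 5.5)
The plan is to transplant the finite-dimensional argument of \cite{Bou23} and to isolate exactly where the weak structure intervenes. The standing assumptions are as follows. The Riemannian descent algorithm uses a retraction $R$ and produces iterates $p_{n+1}=R_{p_n}(s_n)$, where the steps $s_n$ satisfy a sufficient decrease condition
\begin{equation*}
f(p_n)-f(p_{n+1})\ \ge\ c\,\vvvert \nabla f(p_n)\vvvert^2
\end{equation*}
for some $c>0$. In \cite{Bou23} this is obtained from an Armijo backtracking or fixed step size together with an $L$-smoothness type bound for the pullback $f\circ R_{p}$ along the gradient direction, and this bound is exactly the content of the additional structural assumption mentioned before the theorem. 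I also assume that $f$ is bounded below on $M$. First I would verify that the descent inequality makes sense in the weak setting. The gradient $\nabla f(p)$ exists by assumption on $M$ (this is part of the standing hypothesis under which the first-order theorem was stated), so along the curve $t\mapsto R_p(-t\nabla f(p))$ we have
\begin{equation*}
\tfrac{d}{dt}\big|_{t=0} f\bigl(R_p(-t\nabla f(p))\bigr)=-\vvvert\nabla f(p)\vvvert^2 .
\end{equation*}
The assumed Lipschitz-type bound then yields $f(R_p(-t\nabla f(p)))\le f(p)-t\vvvert\nabla f(p)\vvvert^2+\tfrac{L}{2}t^2\vvvert\nabla f(p)\vvvert^2$. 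Optimizing in $t$, or running backtracking, gives the decrease with $c=\min\{\tfrac{1}{2L},\tau\bar t\}$.

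Second, I would telescope: $\sum_{n=0}^{N}c\,\vvvert\nabla f(p_n)\vvvert^2\le f(p_0)-\inf f<\infty$. Hence the series converges and $\vvvert\nabla f(p_n)\vvvert\to0$. This step is purely scalar and insensitive to the weak topology.

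Third, for the accumulation points, let $p_{n_k}\to p$ in the manifold topology. I need $\vvvert\nabla f(p_{n_k})\vvvert\to\vvvert\nabla f(p)\vvvert$, or at least lower semicontinuity $\vvvert\nabla f(p)\vvvert\le\liminf\vvvert\nabla f(p_{n_k})\vvvert=0$. This is the main obstacle. On a weak Riemannian manifold, $\nabla f$ need not be continuous as a map into $TM$, and the norms $\vvvert\cdot\vvvert_q$ do not control the model topology. I would avoid gradient continuity altogether by using a duality characterization:
\begin{equation*}
\vvvert\nabla f(q)\vvvert_q=\sup_{v\in T_qM,\ \vvvert v\vvvert_q\le1} df(q)v .
\end{equation*}
Since $f$ is $C^1$, $df$ is continuous on $TM$. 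Fix a unit $v\in T_pM$, extend it in a chart to a continuous local field $V$, and set $v_k:=V(p_{n_k})/\vvvert V(p_{n_k})\vvvert$. Continuity of the metric gives $\vvvert V(p_{n_k})\vvvert\to1$, so $df(p_{n_k})v_k\to df(p)v$. On the other hand, $|df(p_{n_k})v_k|\le\vvvert\nabla f(p_{n_k})\vvvert\to0$ by Cauchy--Schwarz. Hence $df(p)v=0$ for all $v$, so $df(p)=0$ and $\nabla f(p)=0$, using the injectivity of the flat map. This gives lower semicontinuity of the gradient norm from continuity of $df$ and of the metric alone. If the paper's extra assumption (``Hesse manifold'') instead directly provides continuity of $\nabla f$, I would simply invoke that.
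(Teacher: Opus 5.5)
Your proof is correct, but for the accumulation-point claim it takes a different route from the paper. The argument you give needs fewer hypotheses than the paper's.

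\textbf{How the paper proves it.} In Proposition \ref{proposition: RGD gradient convergence}, the paper gets $\vvvert\nabla f(p_n)\vvvert_{p_n}\to 0$ by the same telescoping argument as \cite[4.7]{Bou23}. For criticality of accumulation points it invokes Assumption \ref{assumption: gradient convergence}: gradients along convergent sequences converge in $TM$. Through the preceding lemma and Corollary \ref{corollary: Riemannian gradient sequentially cts}, this gives $\nabla f(p_{n_k})\to\nabla f(p)$ in $TM$. Continuity of $v\mapsto\vvvert v\vvvert$ on $TM$ then yields $\nabla f(p)=0$.

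\textbf{Mismatches with the paper's setup.} Assumption \ref{assumption: gradient convergence} is the ``additional assumption'' announced before the theorem. It is neither an $L$-smoothness bound nor the Hesse manifold structure, which plays no role in this first-order result. The paper simply takes sufficient decrease as a hypothesis (Assumption \ref{assumption: iteration decrease}) and uses a normalized local addition rather than a retraction. Your first step, deriving the decrease from a Lipschitz-type bound, is therefore a harmless extra layer and not part of proving the statement.

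\textbf{What your third step does differently.} It avoids any continuity of $\nabla f$ and uses only the joint continuity of $df$ and of $g$ in a chart, together with Cauchy--Schwarz. Fix $v$ in the model space. Then
\[
|D_{p_{n_k}}f(v)|\le\vvvert\nabla f(p_{n_k})\vvvert_{p_{n_k}}\,g_{p_{n_k}}(v,v)^{1/2}\to 0,
\]
while $D_{p_{n_k}}f(v)\to D_pf(v)$. Hence $D_pf=0$, and $p$ is critical by Proposition \ref{prop: critical point derivative}. Your normalization $v_k$ is not even needed. This is valid.

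\textbf{What each route buys.} Your argument shows that Assumption \ref{assumption: gradient convergence} is superfluous for the statement. Only Assumptions \ref{assumption: global minimum} and \ref{assumption: iteration decrease}, plus $f$ being a $C^1$ gaf, are used. So the paper's remark that sequential continuity of the gradient is needed to conclude criticality is overly cautious. The paper's route gives more information than the theorem requires: actual convergence $\nabla f(p_{n_k})\to\nabla f(p)$ of the gradient vectors in $TM$. Your route carries over verbatim to subnets, so it also handles cluster points when the model space is not first countable. There, subsequence arguments based on sequential continuity need not apply.
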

Second order optimality is more complicated due to intricate structure arising in the critical points of the Hessian (cf. e.g.\ Example \ref{ex:Eli_curves}). Nevertheless, one can prove the following: 
\begin{theorem}[Second-Order Optimality]
    A point $p\in M$ with $\nabla f(p) = 0$ and $\Hess f(p)$ positive-definite is a local minimizer if and only if the Riemannian Hessian is coercive at that point, i.e. there exists $\mu >0$ such that
    \begin{equation*}
        g_p(\Hess f(p)[v],v) \geq \mu \vvvert v\vvvert_p, \quad \forall v\in T_pM.
    \end{equation*}
\end{theorem}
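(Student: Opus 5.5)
The plan is to reduce both directions to a second-order Taylor expansion of $f$ along curves $c_v(t)=R_p(tv)$ issuing from $p$. Here $R$ is the retraction (or the exponential map, where it exists) used for the Riemannian descent algorithm above. Since $\nabla f(p)=0$, the first-order term vanishes. Using the definition of $\Hess f(p)$ as the second derivative of $f\circ c_v$ at a critical point, I would write
\begin{equation*}
f(R_p(tv)) = f(p) + \tfrac{t^2}{2}\, g_p(\Hess f(p)[v],v) + r_p(tv).
\end{equation*}
The remainder $r_p$ is controlled in a chart around $p$ by the continuity of the second derivative of $f\circ R_p$. Because the left side of the coercivity inequality is quadratic in $v$ while $\vvvert v\vvvert_p$ is linear, I would read the inequality on the unit sphere $\{\vvvert v\vvvert_p=1\}$ of the weak norm. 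By homogeneity this is the same as $g_p(\Hess f(p)[v],v)\ge \mu\vvvert v\vvvert_p^2$ for all $v$. The whole argument consists in comparing the size of $r_p(tv)$ with $t^2\mu$ uniformly in such $v$.

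\emph{Sufficiency.} Assume coercivity with constant $\mu$. The expansion gives
\begin{equation*}
f(R_p(tv))-f(p)\ge \tfrac{t^2}{2}\mu\vvvert v\vvvert_p^2 + r_p(tv).
\end{equation*}
I would then use the integral form of the remainder in a chart,
\begin{equation*}
r_p(w)=\int_0^1 (1-s)\bigl(D^2(f\circ R_p)(sw)-D^2(f\circ R_p)(0)\bigr)[w,w]\,ds .
\end{equation*}
The goal is to show $|r_p(w)|\le \tfrac{\mu}{4}\vvvert w\vvvert_p^2$ for $w$ in a neighbourhood $U$ of $0$ in the native topology of $T_pM$. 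Granting this, $f\ge f(p)$ on $R_p(U)$. Moreover, since $R_p$ is a local diffeomorphism at $0$, $R_p(U)$ is a neighbourhood of $p$, so $p$ is a local minimizer.

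\emph{Necessity.} Assume $p$ is a local minimizer and $\Hess f(p)$ is positive-definite, but coercivity fails. Then there are $v_n$ with $\vvvert v_n\vvvert_p=1$ and $g_p(\Hess f(p)[v_n],v_n)\to 0$. I would choose step sizes $t_n\to0$ so that $t_nv_n\to 0$ in the native topology, which requires normalising $v_n$ also in a native seminorm. I would then look for a perturbation along the directions $v_n$ for which the remainder $r_p(t_nv_n)$ is negative and dominates $\tfrac{t_n^2}{2}g_p(\Hess f(p)[v_n],v_n)$. This would produce points $R_p(t_nv_n)\to p$ with $f(R_p(t_nv_n))<f(p)$, contradicting local minimality.

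The main obstacle sits in both directions and is the same issue: the weak norm $\vvvert\cdot\vvvert_p$ does not control the native topology. In sufficiency, a bound of $r_p$ by $o(\vvvert w\vvvert_p^2)$ is not automatic from smoothness in the native topology. I would first try to obtain it from the Hesse-manifold structure introduced in the paper, namely that $\Hess f$ is continuous as a map into the $g_p$-dual. In necessity, the difficulty is constructing the descent sequence from the mere failure of coercivity. Here I would try to exploit the negative part of $r_p$ along the degenerate directions, as suggested by Example \ref{ex:Eli_curves}, and I expect this step to require the most care.
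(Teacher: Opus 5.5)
\textbf{Necessity.} This half cannot be completed, because it is false. The paper only proves the ``if'' direction (Proposition~\ref{proposition: Hesse minimizer}) and gives no argument for ``only if''. Take $\ell^2$ with its flat strong metric, which is a Hesse manifold, and $f(x)=\tfrac12\sum_n x_n^2/n^2$. Then $\nabla f(0)=0$, $\Hess f(0)=\mathrm{diag}(n^{-2})$ is positive definite, and $0$ is a global minimizer. Yet $g_0(\Hess f(0)[e_n],e_n)=n^{-2}\to 0$ with $\vvvert e_n\vvvert_0=1$, so coercivity fails. Along straight lines your remainder $r_p$ vanishes identically, and since $f\ge f(0)$ everywhere, no choice of $v_n,t_n$ produces points below $f(p)$. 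The negative remainder you want to extract along degenerate directions need not exist. (Reading the inequality with $\vvvert v\vvvert_p^2$ is correct; as literally written it can never hold.)

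\textbf{Sufficiency.} The estimate $|r_p(w)|\le\tfrac{\mu}{4}\vvvert w\vvvert_p^2$ on a native neighbourhood is the entire content, and nothing in the hypotheses gives it. Via your integral formula it amounts to $D^2(f\circ\phi^{-1})(y)\to D^2(f\circ\phi^{-1})(0)$ uniformly on $\vvvert\cdot\vvvert_p$-bounded sets as $y\to 0$ natively. Native $C^2$-smoothness only controls the remainder by native seminorms. A metric spray says nothing about continuity of $\Hess f$ into $\vvvert\cdot\vvvert_p$-bounded operators. Without such an extra hypothesis the implication fails. Take $M=\ell^2$ with the constant weak metric $g(v,w)=\sum_n v_nw_n/n^2$, whose metric spray is $0$, and
\[
f(x)=\sum_n\frac{1}{n^2}\Bigl(\frac{x_n^2}{2}-\psi_n(x_n)\Bigr),\qquad \psi_n(s)=\frac{ns^4}{1+ns^2}.
\]
Here $\psi_n'(s)=2s\bigl(1-(1+ns^2)^{-2}\bigr)$ and $0\le\psi_n''\le 4$ with $\psi_n''(0)=0$. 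Hence $\nabla f(x)=(x_n-\psi_n'(x_n))_n\in\ell^2$, $f$ is a $C^2$-gaf with $C^1$ gradient, and $\Hess f(0)=\mathrm{id}$, so coercivity holds with $\mu=1$. Yet $f(2N^{-1/2}e_N)=-\tfrac65 N^{-3}<0=f(0)$ while $2N^{-1/2}e_N\to 0$. Indeed $g(\Hess f(2N^{-1/2}e_N)[e_N],e_N)<0$ for every $N$.

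The paper's proof does not close this gap either. Its $R(t)=\mathcal{O}(t^3)$ is the remainder along a single curve $\phi^{-1}(tx)$, so $\xi$ depends on $x$. Its chain of inequalities also replaces $\vvvert D_{\phi(p)}\phi^{-1}(x)\vvvert_p^2$ by its upper bound $A^2$ inside a lower bound, and it ends with $f(p)$ plus a negative quantity.

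If you add the uniform-continuity condition above as a hypothesis, your integral-remainder argument does go through. Run it in a chart or with a local addition rather than a retraction: Example~\ref{ex:bad:retraction} shows that $R_p$ need not map neighbourhoods of $0$ onto neighbourhoods of $p$.
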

Unlike the finite-dimensional setting--where positive definiteness of the Hessian suffices--coercivity is more restrictive here, failing to follow from positive definiteness on weak Riemannian manifolds. 
Note that this describes a typical phenomenon beyond Hilbert spaces. For example it is well known that convexity properties on functions used in finite dimensional optimization, typically force a Banach space to either be reflexive or even a Hilbert space (see e.g. \cite{BaW25,3279480}).

To establish second-order optimality conditions that provide, in addition to necessary conditions, a sufficient condition for local minima, we require several additional properties of the underlying weak Riemannian manifold. These properties ensure that the Hessian is well behaved and allow us to draw conclusions about local extrema. A weak Riemannian manifold satisfying these properties will be called a \textit{Hesse manifold}. We show that Hesse manifolds constitute a refinement of the existing classification into weak, robust, and strong Riemannian manifolds. In particular, we demonstrate  that:
\begin{theorem}
    Every robust Riemannian $C^\infty$- manifold $(M,g)$ is a Hesse manifold. 
\end{theorem}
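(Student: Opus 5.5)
We do not yet know the precise definition of a Hesse manifold; from the introduction it is a weak Riemannian manifold on which the Riemannian Hessian of every smooth $f$ is well defined and well behaved. So the plan is to verify, one by one, the defining properties of a Hesse manifold for a robust Riemannian $C^\infty$-manifold $(M,g)$. We expect these properties to be: existence of gradients, existence of a metric derivative (a covariant derivative compatible with $g$), and the resulting Hessian $\Hess f(p)[v]=\nabla_v \nabla f$ being a $g_p$-symmetric operator that depends continuously on $p$ and $v$.

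Recall that robustness means two things. First, $g$ admits a Levi-Civita connection (equivalently, a metric spray). Second, the completions $\overline{T_pM}$ of the tangent spaces with respect to $g_p$ form a smooth Hilbert bundle over $M$, with local trivialisations in which the inner products vary smoothly.

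\textbf{Gradient.} For a $C^\infty$ function $f$ we must show that $df(p)$ is represented by a tangent vector $\nabla f(p)\in T_pM$, not merely by an element of the completion.

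This is the delicate point, and we expect it to be the main obstacle. A weak metric only gives $\flat\colon T_pM\to T_p^*M$ injective, not surjective. Robustness alone does not seem to force $df(p)\in \flat(T_pM)$.

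We would try first to show that the Hesse axioms are formulated for functions admitting a gradient, i.e.\ with $df$ lying in the image of $\flat$, so that only the existence of the Hessian needs proof. If instead the definition requires gradients for all $f$, then we must exploit that the defining function class is restricted accordingly. The fallback is to work in the completed Hilbert bundle: there the Riesz representation theorem gives $\nabla f(p)\in\overline{T_pM}$, and the smooth structure of the bundle transports the argument.

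\textbf{Hessian.} Assuming the gradient field $\nabla f$ is a smooth vector field, we define $\Hess f(p)[v]=\nabla_v\nabla f$ using the Levi-Civita connection guaranteed by robustness. In a chart, the connection is given by Christoffel symbols $\Gamma$, and
\[
\nabla_v X = dX(p)[v] + \Gamma_p(v,X(p)).
\]
This expression is smooth, hence continuous in $(p,v)$, and linear in $v$.

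\textbf{Symmetry.} We would show $g_p(\Hess f(p)[v],w)=g_p(v,\Hess f(p)[w])$ by the usual computation. Metric compatibility gives
\[
v\,w f = v\, g(\nabla f, W) = g(\nabla_v \nabla f, w) + g(\nabla f, \nabla_v W).
\]
Torsion-freeness gives $\nabla_v W-\nabla_w V=[V,W]$. Together with the symmetry of second derivatives in the chart, this yields the claim for arbitrary extensions $V,W$ of $v,w$; such extensions exist locally in charts.

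\textbf{Second-order Taylor expansion.} If the Hesse axioms include an expansion along curves, we use the metric spray from robustness to obtain geodesics or a retraction $R_p$. We then show
\[
f(R_p(tv)) = f(p) + t\,g_p(\nabla f(p),v) + \tfrac{t^2}{2}\,g_p(\Hess f(p)[v],v) + o(t^2).
\]
This follows by differentiating twice along the geodesic and using $\Dt\dot\gamma=0$, again by the same metric-compatibility computation.
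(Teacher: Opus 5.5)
\textbf{Gap: you assume the statement's whole content.} By Definition \ref{definition: Hesse manifold}, a Hesse manifold is simply a weak Riemannian $C^\infty$-manifold that admits a metric spray. Gradients, symmetry of the Hessian and Taylor expansions are not part of the definition. So the statement amounts to the implication \emph{robust $\Rightarrow$ a metric spray exists}. You assume exactly this when you recall robustness as \emph{$g$ admits a Levi-Civita connection (equivalently, a metric spray)}.

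Definition \ref{defn_robust_riem_mfd} gives only two things: a metric derivative, i.e.\ an operator $\nabla\colon\mathcal{V}(M)\times\mathcal{V}(M)\to\mathcal{V}(M)$ compatible with $g$, and the Hilbert bundle $\overline{TM}$ of fibrewise completions. In infinite dimensions nothing guarantees a priori that such an operator is of first order. Still less is it guaranteed to have the local form $dY(x;u(x))-B_U(x,u(x),Y(x))$ with a smooth bilinear $B_U$ valued in the model space of $TM$ (cf.\ Remark \ref{rem:fo-connection}). Nor can you solve \eqref{eq:mspray} for $\Gamma_U(x,v)$ directly, because $\flat$ is not surjective. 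Your chart formula $\nabla_vX=dX(p)[v]+\Gamma_p(v,X(p))$, and with it the pointwise definition of $\Hess f(p)[v]$, presupposes the very spray you need to construct.

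\textbf{The missing construction.} The proof has to build this spray, and that is where property (1) enters; you use it only as a fallback for gradients. The paper proceeds as follows. In a local trivialization $\overline{TU}\cong U\times H$ the metric becomes a smooth Hilbert bundle metric $\overline{g}_U$. The Riesz isomorphism then solves \eqref{eq:mspray} for a quadratic form $\Gamma_U(x,\cdot)\colon H\to H$ depending smoothly on $x$. Polarization gives $B_U$ and a metric connection $\overline{\nabla}_U(\xi,\sigma)=d\sigma(\cdot\,;\xi)-B_U(\cdot,\xi,\sigma)$ on $\overline{TU}$.

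Property (2) then identifies the metric derivative of $g$ with the restriction of $\overline{\nabla}_U$ to sections of $TU$. Since $d\sigma(x;\xi(x))$ and $\nabla_\xi\sigma(x)$ lie in the model space of $TM$, so does $B_U(x,\xi(x),\sigma(x))$. Hence $(x,v)\mapsto(x,v,v,\Gamma_U(x,v))$ defines a spray on $TU$ inducing $\nabla$, i.e.\ a metric spray. One then checks that these local sprays are compatible under chart changes.

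\textbf{The rest of your plan.} It is not needed, and parts would fail in general. Robustness does not give gradients of all smooth functions. For example, a point evaluation on $C^\infty(\SSS^1,\R^2)$ with the $L^2$-metric has none, although that metric is robust by Proposition \ref{prop:L2_robust}; this is why the paper imposes the gaf property on $f$ separately. Also, a spray on a manifold not modeled on Banach spaces need not produce geodesics or an exponential map. The paper's Taylor expansion therefore uses arbitrary smooth curves and the covariant derivative $\Dt$ from Lemma \ref{lem: induced covariant derivative}.
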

We then study the robust metrics introduced in \cite{MMM13} with respect to their application in optimization. As a new result, we prove that the class of elastic metrics from shape analysis are robust.
Summing up, this leads to the following hierarchy of Riemannian manifolds:
\begin{center}
\begin{tikzpicture}[
    node distance=0.5cm,
    box/.style={
        rectangle,
        draw, fill=white,
        rounded corners,
        minimum width=2.8cm,
        minimum height=1cm,
        align=center
    },
    box2/.style={
        rectangle,
        draw, fill=white,
        rounded corners,
        minimum width=4cm,
        minimum height=1cm,
        align=center
    },
    dashedgroup/.style={
        rectangle,
        draw,
        dashed,
        rounded corners,
        inner sep=0.2cm
    },
    examplegroup/.style={
        rectangle,
        fill=gray!10,
        rounded corners,
        inner sep=0.2cm
    },
    solidgroup/.style={
        rectangle,
        draw,
        rounded corners,
        inner sep=0.25cm
    },
        widebox/.style={
        rectangle,
        draw, fill=white,
        rounded corners,
        minimum width=6.1cm, % spans two boxes + spacing
        minimum height=1cm,
        align=center
    },
    arrow/.style={
        ->,
        thick
    }
]

% Middle row
\node[box2] (m2) {Strong\\Riemannian};
\node[box, right=of m2] (m3) {Robust\\Riemannian};
\node[box, right=of m3] (m4) {Hesse\\manifold};
\node[box, right=of m4] (m5) {Weak\\Riemannian};

% Top node
\node[box2, above=of m2] (m1) {dim $< \infty$ \\paracompact};

% Bottom row
\node[box2, below=of m2] (e1){Grossmann's ellipsoid\\ Example \ref{ex:GM_ellipsoid}};
\node[box, below=of m3] (d1) {};
\node[widebox, right=of e1] (e23) {$L^2$-metric; elastic metric\\ Proposition \ref{prop:L2_robust}; Remark \ref{rem:elastic}};
\node[box, below=of m5] (e4) {twisted $\ell^2$\\ Example \ref{example: Metric Spray counterexp}};

% Horizontal arrows
\draw[arrow] (m2) -- (m3);
\draw[arrow] (m3) -- (m4);
\draw[arrow] (m4) -- (m5);

% Vertical arrows
\draw[arrow] (m1) -- (m2);
\draw[arrow] (m2) -- (e1);
\draw[arrow] (m3) -- (d1);
\draw[arrow] (m5) -- (e4);

% Inner dashed group

% Outer solid group (Hilbert manifold)
\node[solidgroup, fit=(m1)(m2)(e1),
      label={[rotate=90, anchor=center, fill=white, inner sep=2pt]
             west:\textbf{Hilbert Manifold}}] {};

% Examples 
\begin{pgfonlayer}{background}
\node[dashedgroup, fit=(m2)(m3)(m4)(m5)(e1)(e23)(e4),
      label={[anchor=west, fill=white, xshift=6cm, yshift=0.15cm]north west: \textbf{(possibly) Infinite-Dimensional manifold}}] (g1) {};
\node[examplegroup, fit=(e1)(e23)(e4),
      label={[anchor=center,fill=white, inner sep=2pt] below:\textbf{Examples}}] {};
\end{pgfonlayer}
\end{tikzpicture}

\end{center}

The structure of the article is as follows:  To establish Riemannian optimization on weak Riemannian manifolds, we first address the primary structural challenges. Section~\ref{sec: weak Riem mfds} introduces two fundamental restrictions enabling Riemannian optimization in this generality, presents examples of pathological behavior without them, and verifies that these restrictions preserve the essential structure of weak Riemannian manifolds.

Building on this foundation, Section~\ref{sec: Optimality conditions} derives first- and second-order optimality conditions in terms of the Riemannian gradient and Hessian. Section~\ref{sec: RGD} introduces the Riemannian gradient descent method and analyzes its convergence, showing that classical results carry over under mild additional conditions.

We then introduce two key classes - strong and robust Riemannian manifolds - focusing on the latter's construction and structural properties (Section~\ref{sec: Hesse mfds}), while proving simplifications for the former. Finally, Section~\ref{sec: computation} provides explicit formulas for Riemannian gradients and Hessians, complemented by numerical examples (Section ~\ref{sec: num exp}).\smallskip

\textbf{Acknowledgements} V.Z. was funded by the German research foundation (DFG – Projektnummer 448293816), she thanks the Department of mathematics at NTNU for the hospitality during a research stay while part of this work was conducted.

\section{Preliminaries}
Weak Riemannian manifolds are often modeled on locally convex spaces which are in general not Banach manifolds. 
The usual calculus, also called Fréchet differentiability, has to be replaced. We employ Bastiani calculus, see \cite[Section 1.4]{Sch23}, which is based on directional derivatives. This means that a continuous function $f\colon E \supseteq U \rightarrow F$ on an open subset of a locally convex space is $C^1$ if for every $x \in U, v\in E$ the directional derivative 
$$df(x;v) := \lim_{h\rightarrow 0} h^{-1}(f(x+hv)-f(x))$$
exists and yields a continuous map $df\colon U\times E \rightarrow F$. Using iterated directional derivatives, one likewise defines $C^k$-mappings for $k\in \mathbb{N}$. A map which is $C^k$ for all $k\in \mathbb{N}$ is called smooth or $C^\infty$. The usual assertions such as linearity of the derivative and the chain rule remain valid. 

As the chain rule is valid, we can define as in finite dimensions, manifolds via charts. A manifold is called a \emph{Hilbert/Banach/Fr\'{e}chet-manifold} if all the modeling spaces of the manifold are Hilbert/Banach/Fr\'{e}chet spaces. Further, for a manifold $M$ the tangent spaces $T_pM$ are defined via equivalence classes of curves \cite[Def.~1.41]{Sch23} and are canonically isomorphic to the model space of the manifold. Similarly the tangent bundle and differentiability of mappings on manifolds can be defined. For the tangent map of a $C^1$-map $f\colon M \rightarrow N$ we will write
\[
D_pf\colon T_pM\to T_{f(p)}N, \quad [\gamma]\mapsto [f\circ\gamma].
\]
For a vector bundle $\pi \colon E \rightarrow M$ on a smooth manifold, we will write $\Gamma(E)$ for the space of smooth bundle sections. In the special case that $E=TM$ is the tangent bundle, we also write $\mathcal{V}(M):=\Gamma(TM)$.

When establishing Riemannian metrics on locally convex manifolds beyond the Hilbert setting, a crucial distinction arises between \emph{weak} and \emph{strong} Riemannian metrics, essential for the subsequent optimization.
\begin{definition}[Weak/Strong Riemannian Manifold]
  Let $M$ be a $C^1$-manifold. A \emph{weak Riemannian metric} $g$ on $M$ is a smooth map on the Whitney-sum
  \[
  g \colon TM \oplus TM \to \mathbb{R}, \quad (v_p, w_p) \mapsto g_p(v_p, w_p),
  \]
  such that $g_p$ is symmetric, bilinear on $T_p M \times T_p M$, and $g_p(v,v) \geq 0$ with equality iff $v = 0$.
  If the topology on $(T_p M, g_p)$ coincides with the subspace topology of $T_p M \subset TM$, then $g$ is \emph{strong}.
  We then call $(M, g)$ a \emph{weak/strong Riemannian manifold}.
\end{definition}

\begin{example}\label{ex:basic}
Let $(H, \langle \cdot,\cdot \rangle)$ be a Hilbert space and $K$ a compact manifold with volume form $\mu$. Recall from \cite[Chapter 2]{Sch23} that the space of smooth functions $C^\infty (K,H)$ is a Frech\`{e}t space and if $\Omega$ is an open subset of $H$, then $C^\infty (K,\Omega)$ is open (whence a submanifold of $C^\infty (K,H)$). Integration against the volume form yields metrics on $C^\infty(K,\Omega)$, namely the $L^2$-metric
\begin{align*}
g^{L^2} \colon C^\infty (K,\Omega) \times C^\infty (K,H)^2 &\rightarrow \R ,\quad  (f,(v,w))\mapsto g^{L^2}_f (v,w):= \int_K \langle v(x) , w(x)\rangle \, d \mu(x)
\end{align*}
(where we identify the tangent bundle using standard arguments as in \cite[Chapter 2]{Sch23}). Further, for the case $K=\mathbb{S}^1$, one also considers a reparametrization invariant version of the $L^2$-metric
\begin{align*}
g_{\text{inv}} \colon C^\infty (\mathbb{S}^1,\Omega) \times C^\infty (\mathbb{S}^1,H)^2 &\rightarrow \R , (c,(v,w)) \mapsto g_{\text{inv},c} (v,w)= \int_K \langle v(x),w(x)\lVert  \dot{c}(x) \rVert d\mu(x) 
 \end{align*}
It is not hard to see (cf. \cite[Chapter 4 and 5]{Sch23}) that these mappings yield weak Riemannian metrics. Since strong Riemannian metrics can only exist on Hilbert manifolds (cf. Lemma \ref{definition:alternative strong Riemannian manifold}), $C^\infty (K,\Omega)$ with the above metrics can not be a strong Riemannian manifold.
\end{example}

Since we operate beyond the Banach setting, there is no natural norm on the spaces we consider. Although the inner products induce norms, these do not generate the natural topology, and in particular, the spaces are not complete with respect to these norms. 
\begin{remark}
To avoid confusion, we write $\vvvert v \vvvert _p := \sqrt{g_p(v,v)}$ for the norm on $T_pM$ induced by the inner product $g_p$, which need not be complete, and $\|v\|$ for a Banach norm, if we are working in the Banach case.
\end{remark}
To facilitate Riemannian optimization in our setting, we introduce:
\begin{definition}[Riemannian Gradient]\label{def:riem-grad}
  Let $(M,g)$ be a weak Riemannian $C^1$-manifold and $f \colon M \to \mathbb{R}$ a $C^1$-map. A vector field $\nabla f$ satisfying
  \[
  D_pf(v) = g_p(\nabla f(p), v) \quad \forall\, v \in T_p M
  \]
  is the \emph{Riemannian gradient} of $f$.
\end{definition}

\begin{definition}[Riemannian Hessian]\label{def:riem-hess}
  Let $(M,g)$ be a $C^2$-manifold with first-order\footnote{a connection is first order if its value at a point depends at most on the $1$-jets of the sections at the point. See Remark \ref{rem:fo-connection}. Every connection on a finite dimensional manifold is of first order.} Levi--Civita connection $\nabla$, and $f \colon M \to \mathbb{R}$ a $C^2$-function with Riemannian gradient $\nabla f$. The  \emph{Riemannian Hessian} of $f$ at $p$ is the map
  \[
  \Hess f(p) \colon T_p M \to T_p M, \quad v \mapsto \nabla_v \nabla f(p).
  \]
\end{definition}
All definitions and results from infinite-dimensional differential geometry follow \cite{Sch23}. For the readers convenience we recall some essential technical objects in Appendix \ref{App:diffgeo}.

\section{Weak Riemannian Manifolds in Optimization}
\label{sec: weak Riem mfds}
To introduce the subsequent chapters on optimization on weak Riemannian manifolds, we first specify the setting in which Riemannian optimization techniques can be applied. Although the objective of this work is to develop optimization methods on spaces as general as possible - namely weak Riemannian manifolds - the weak structure of the underlying geometry requires us to impose several structural assumptions in order to establish a well-defined framework.

Since our optimization approach relies on Riemannian methods, we focus on first- and second-order differential objects, in particular the Riemannian gradient and the Riemannian Hessian. These quantities are essential for the formulation and analysis of first- and second-order optimality conditions and gradient-based optimization algorithms.

On weak Riemannian manifolds, however, these objects are not available in general. Recall that for a weak Riemannian $C^1$-manifold $(M,g)$ the Riemannian gradient of a $C^1$-function $f$ is defined by the unique vector field satisfying $D_pf(v) = g_p(\nabla f(p),v)$ for all $v \in T_pM$. Since on weak Riemannian manifolds the musical morphism between the tangent bundle and its dual is not necessarily surjective \cite[4.4]{Sch23}, the existence of the Riemannian gradient of a function cannot be guaranteed. The following example demonstrates a situation in which the Riemannian gradient fails to exist on the tangent space under consideration. 
\begin{example}\label{ex:Sobolev}
    We consider the space $\text{Imm}(\SSS^1, \R^2)$ of all smooth immersions. It is an open subset of $C^\infty(\SSS^1, \R^2)$ (see \cite[Example 4.6]{Sch23} for details) and we use the (restriction of the) invariant metric from Example \ref{ex:basic} to define an invariant $H^1-$ metric:
    \[
g_{inv, c}^{H^1} (u,v):=g_{inv,c}(u,v) +g_{inv,c}(\dot{u},\dot{v}).
\]
In \cite[Section 4]{Sch23} it has been shown, that $\big (\text{Imm}(\SSS^1,\R^2\big),g_{inv,c}^{H^1}\big)$ is indeed a weak Riemannian manifold.
We then consider the length function
\[\mathcal{L} \colon \text{Imm}(\SSS^1,\R^2) \rightarrow \R, \mathcal{L}(c):=\int_{\SSS^1} |\dot{c}|\, d\mu.\]
In \cite[Section 4.1]{SWW23} the invariant $H^1$-gradient of the length function $\mathcal{L}$ was computed using a Green's function to solve the arising ODE. Using the arc-length reparametrization for $c$, we write $\gamma \colon [0,L]\rightarrow \R^2, s \mapsto c(\exp(\mathrm{i}s/2\pi)$ with $L:=\mathcal{L}(c)$ and the Riemannian gradient becomes: 
\begin{align}\label{eq:H1_grad_length_2}\nabla\mathcal{L} (s) = \gamma(s) + \int_{0}^{L} \gamma(t) \frac{\cosh\left(|s-t|-\frac{L}{2}\right)}{2\sinh\left(-\frac{L}{2}\right)}dt.\end{align}
Now \eqref{eq:H1_grad_length_2} will in general not be differentiable in $s$ (i.e. in the contribution by Green's function), whence the Riemannian gradient of $\mathcal{L}$ does not exist as an element in $T\text{Imm} (\SSS^1,\R^2)$ (or for that matter in the tangent space of the one time continuously differentiable immersion which is the context studied in \cite{SWW23}). Here the gradient only exists as an element in the completion of the tangent space, which can be identified with the space $H^1 (\SSS^1,\R^2)$ of all Sobolev $H^1$-functions.
\end{example}
\begin{remark}
The gradient flow induced by the length function with respect to the invariant $L^2$-metric on the immersions (instead of the $H^1$-metric we used) corresponds to the famous curve shortening flow studied in \cite{GaH86}. 
With respect to the invariant $H^1$-metric, the corresponding gradient flow has been studied in \cite{SWW23}.
\end{remark}
Nevertheless, assuming the existence of a Riemannian gradient does not turn out to be overly restrictive, since its existence does not, for instance, imply that the metric is strong. In Section ~\ref{sec: computation}, we present several examples illustrating the computation of Riemannian gradients on weak Riemannian manifolds. In particular, Example \ref{example: Riemannian Gradient on Imm} provides an explicit computation of the Riemannian gradient of the length function $\mathcal{L}$ on the space of smooth immersion $\text{Imm}(\SSS^1,\R^2)$ endowed with the invariant $L^2-$metric, thereby demonstrating that the existence of the Riemannian gradient of a function depends not only on the function itself but also on the chosen metric.

In the context of Riemannian optimization, where the structure of the Riemannian gradient is essential, but cannot be guaranteed when working on weak Riemannian manifolds, we introduce the following definition for notational convenience.
\begin{definition}
    A $C^1$- function $f\colon M \to \R$ on a weak Riemannian $C^1$- manifold $(M,g)$ is called a \textit{gradient-admitting function} (abbreviated \textit{gaf}) if the Riemannian gradient $\nabla f(p)$ exists for all $p \in M$.
\end{definition}
In addition to the Riemannian gradient, the Riemannian Hessian encodes second-order information about the local behavior of the function. Consider a weak Riemannian $C^\infty$- manifold $M$ that admits a first-order Levi-Civita connection $\nabla$. For a gradient-admitting $C^2$- function $f$ on $M$, recall, that the Riemannian Hessian of $f$ at $p\in M$ is defined by
\begin{equation*}
    \Hess f(p)[u] = \nabla _u \nabla f, \quad u\in T_pM. 
\end{equation*}
Consequently, the definition of the Riemannian Hessian requires not only the existence of the Riemannian gradient but also the availability of a first-order Levi-Civita connection. This imposes an additional structural restriction on the underlying manifold. In particular, on weak Riemannian manifolds such a connection does not exist in general. An explicit example of a weak Riemannian manifold without a Levi-Civita connection is given in \cite[p.12]{BBM14}. 

However, the existence of a Levi-Civita connection alone is still not sufficient for our subsequent analysis. In order to carry basis-independent arguments, we additionally require the existence of a metric spray, cf. Appendix \ref{App:diffgeo}. A spray is a second order vector field which, when compatible with the metric, plays the same role as the Christoffel symbols. Such a spray not only induces a first-order Levi-Civita connection, but also provides the covariant derivative structure necessary for intrinsic arguments. Similarly to the Levi-Civita connection, a metric spray does not exist on weak Riemannian manifolds in general. %The following example, taken from \cite[4.22]{Sch23}, is a weak Riemannian manifold without a metric spray.
\begin{example}
\label{example: Metric Spray counterexp}
    Consider the Hilbert space $M =\big(\ell^2,\langle \cdot ,\cdot \rangle\big)$ of all square-summable real sequences equipped with the weak Riemannian metric
    \begin{equation*}
        g\colon T\ell^2 \oplus T\ell^2 \to \R, \quad T_p\ell^2 \times T_p\ell^2 \ni \big((x_n)_n,(y_n)_n\big) \mapsto e^{-\|p\|^2}\sum_{n\in \mathbb{N}}\frac{x_ny_n}{n^3}.
    \end{equation*}
    As shown in \cite[4.22]{Sch23}, this metric does not admit a metric spray.
\end{example}
By contrast, \cite[5.7]{Sch23} computes the metric spray for a large class of weak Riemannian manifolds of the form $\big(C^\infty(\SSS^1,M),g^{L^2}\big)$, where $(M,g)$ is a strong Riemannian manifold and $g^{L^2}$ denotes the induced $L^2$ metric, showing that this additional assumption does not imply that the metric is strong.

However, Example \ref{example: Metric Spray counterexp} demonstrates that additional structural assumptions are necessary to ensure the existence and well-posedness of the Riemannian Hessian. Accordingly, the following definition establishes notation and identifies the class of weak Riemannian manifolds considered in this work.
\begin{definition}
\label{definition: Hesse manifold}
    A weak Riemannian $C^\infty$- manifold $(M,g)$ is called a \textit{Hesse manifold} if it admits a metric spray $S_g$.
\end{definition}

\section{Optimality Conditions}
\label{sec: Optimality conditions}
In this chapter, we derive first- and second-order optimality conditions for optimization on weak Riemannian manifolds under the structural assumptions introduced in the previous chapter. The goal is to show that, once these restrictions are imposed, the local optimality theory closely parallels the one on strong- or finite-dimensional Riemannian manifolds. 

Our exposition follows the framework developed by Boumal in \cite{Bou23} for finite-dimensional Riemannian manifolds. We adopt his definition of critical points, Riemannian gradients and Riemannian Hessians, and adapt the corresponding arguments to the present setting of weak Riemannian manifolds. In particular, we show that under the stated assumptions, first-order necessary optimality conditions can be formulated in terms of vanishing Riemannian gradients. While second-order conditions in the finite-dimensional setting typically only require positive definiteness of the Riemannian Hessian to guarantee a local minimum, in the infinite-dimensional setting considered here positive definiteness alone is not sufficient. Instead, an additional requirement is needed: the Riemannian Hessian must be coercive at the point of interest. These results justify the use of classical optimization intuition in the more general weak Riemannian setting for first-order conditions; however, this intuition does not carry over to second-order conditions, where additional assumptions and analytical tools are required to rigorously establish local optimality.

Throughout this chapter, $(M,g)$ denotes a weak Riemannian $C^1$-manifold.
\subsection{First-Order Optimality Conditions}
As a first step towards establishing optimization conditions on weak Riemannian manifolds, we consider the notion of critical points. In the finite-dimensional and strong Riemannian setting, critical points are characterized by the vanishing of the Riemannian gradient and are directly linked to first-order necessary conditions.

In the present weak Riemannian setting, however, this characterization is not immediate, as the definition of differentials and tangent spaces relies on Bastiani calculus rather than on a Hilbert space structure. We therefore begin by verifying that Boumal's definition of critical points is compatible with the differential structure adopted here.  
\begin{definition}
    Let $f\colon M \to \mathbb{R}$ be a $C^1$-map. A point $p \in M$ is called a \textit{critical point} of $f$, if $(f \circ \gamma)'(0) \geq 0$ for all $C^{1}$-curves $\gamma$ on $M$ passing through $p$.
\end{definition}
Despite the weak Riemannian structure, critical points admit the same characterization as in the finite-dimensional setting: critical points can be characterized equivalently by the vanishing of the differential and by the vanishing of the Riemannian gradient. The calculations are the same as in the finite dimensional setting and, for the readers convenience, we highlight only where the weak structure is needed. 
\begin{proposition}
\label{prop: critical point derivative}
    Let $f\colon M \to \mathbb{R}$ be $C^1$ and $p \in M$. The point $p$ is a critical point of $f$ if and only if 
    \begin{enumerate}
    \item $D_pf(v) = 0$ for all $v \in T_pM$,
    \item $\nabla f(p)=0$ if $f$ is a gaf.
    \end{enumerate}
  Finally, every local minimizer of $f$ is a critical point.  
\end{proposition}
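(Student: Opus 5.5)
The plan is to show that $p$ is critical if and only if (1) holds, then that (1) is equivalent to (2) for a gaf, and finally that local minimizers are critical.

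\emph{Critical point $\Rightarrow$ (1).} Let $v\in T_pM$. By the definition of tangent vectors as equivalence classes of curves \cite[Def.~1.41]{Sch23}, there is a $C^1$-curve $\gamma$ through $\gamma(0)=p$ with $[\gamma]=v$. Concretely, take $\gamma(t)=\varphi^{-1}(\varphi(p)+tv')$ in a chart $\varphi$, where $v'$ is the chart representative of $v$; this stays in the chart domain for small $|t|$ because the domain is open. The reversed curve $\bar\gamma(t):=\gamma(-t)$ is again $C^1$ through $p$ and represents $-v$. By the chain rule for Bastiani calculus,
\[
(f\circ\gamma)'(0)=D_pf(v)\quad\text{and}\quad (f\circ\bar\gamma)'(0)=-D_pf(v).
\]
Criticality makes both of these nonnegative, so $D_pf(v)=0$.

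\emph{(1) $\Rightarrow$ critical point.} Every $C^1$-curve $\gamma$ through $p$ satisfies $(f\circ\gamma)'(0)=D_pf([\gamma])=0\ge 0$, so $p$ is critical.

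\emph{(1) $\Leftrightarrow$ (2) for a gaf.} This is the one place where the weak structure enters. If $\nabla f(p)=0$, then $D_pf(v)=g_p(0,v)=0$ for every $v$. Conversely, if (1) holds, test the defining identity of the gradient with $v=\nabla f(p)\in T_pM$:
\[
0=D_pf(\nabla f(p))=g_p(\nabla f(p),\nabla f(p)).
\]
Positive definiteness of $g_p$, which is part of the definition of a weak Riemannian metric, then forces $\nabla f(p)=0$. This argument uses no completeness and no surjectivity of the musical map; it needs only that the gradient, when it exists, lies in $T_pM$ itself. That is exactly the gaf hypothesis, as Example \ref{ex:Sobolev} illustrates.

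\emph{Local minimizers.} Let $p$ be a local minimizer and $\gamma$ a $C^1$-curve with $\gamma(0)=p$. By continuity, $\gamma(t)$ lies in the neighbourhood where $f\ge f(p)$ for all small $|t|$. Hence the difference quotients $\bigl(f(\gamma(t))-f(p)\bigr)/t$ are $\ge 0$ for $t>0$, and their limit $(f\circ\gamma)'(0)$ is $\ge 0$. So $p$ is critical.

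The main obstacle, though mild, is the existence of a $C^1$-curve realizing each tangent vector, including its reversal, in the locally convex setting. The chart construction above handles this, since the Bastiani chain rule holds.
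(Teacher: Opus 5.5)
Your proof is correct and takes essentially the same route as the paper. The paper defers the equivalence with (1) and the statement about local minimizers to Boumal's finite-dimensional argument, which is what you write out with the reversed chart curve and the one-sided difference quotients; it then gets (2) from nondegeneracy of $g_p$, of which your test with $v=\nabla f(p)$ and positive definiteness is a concrete instance.
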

\begin{proof}
The equivalence to (1) and the addendum can be proved exactly as in the finite dimensional case. See e.g. \cite[Proposition 4.5.]{Bou23} which only uses the continuity of $f\circ c$ for a smooth curve $c$ on $M$. For (2) we observe that as 
\begin{align}
D_pf(v)=g_p(\nabla f (p),v)=0, \forall v \in T_pM,\label{eq:nondeg}\end{align}
we see that (1) implies (2) as a weak Riemannian metric is nondegenerate and thus \eqref{eq:nondeg} implies that the gradient vanishes if and only if $p$ is critical. 
\end{proof}

This result enables us to establish the fundamental link between minimizers and critical points. Consequently, the classical first-order necessary optimality condition remains valid in the weak Riemannian framework considered here. This provides the foundation for the second-order analysis developed below.

\subsection{Second-Order Optimality Conditions}
We now establish sufficient second-order optimality conditions on Hesse manifolds, that is, manifolds equipped with a Levi-Civita connection induced by a metric spray. The metric spray framework allows us to define covariant derivatives of vector fields along curves in a basis-independent manner. This intrinsic notion of differentiation is crucial for formulating a second-order Taylor expansion of functions along suitable curves without assuming the existence of a basis of the underlying vector space. 
We briefly recall the definition of the Riemannian Hessian for convenience.
\begin{definition}
    Let $(M,g)$ be a Hesse-manifold and $f\colon M \to \mathbb{R}$ be a $C^2$- gaf. Then the Riemannian Hessian of $f$ at $p \in M$ is defined as follows:
    \begin{equation*}
        \Hess f(p) \colon T_pM \to T_pM \quad u\mapsto \nabla_u\nabla f.
    \end{equation*}
\end{definition}

To relate the Riemannian Hessian to local minimality, we analyze the second-order expansion of $f$ along smooth curves.
Let $c\colon I \to M$ be a smooth curve with $c(0) = p$, and define $g = f\circ c$. Since $g\colon I \to \R$ is a classical $C^2$- function, we have the standard Taylor expansion
\begin{equation}
\label{equation: second-order Taylor expansion g}
    f(c(t)) = g(t) = g(0) +tg'(0)+ \frac{t^2}{2}g''(0)+\mathcal{O}(t^3).
\end{equation}
The first derivative follows from the chain rule:
\begin{equation}
\label{equation: first derivative g}
    g'(t) = D_{c(t)}f(c'(t)) = g_{c(t)}\big( \nabla f(c(t)),c'(t)\big).
\end{equation}
In particular,
\begin{equation*}
    (f\circ c)'(0) = g_p\big(\nabla f(p),c'(0)\big). 
\end{equation*}
Thus, first-order behavior is completely determined by the Riemannian gradient.

To compute the second derivative $g''(t)$, we must differentiate $g_{c(t)}\big(\nabla f(c(t)),c'(t)\big)$.
This requires a notion of differentiation of vector fields along curves. 
Those vector fields are defined analogously to \cite[Definition 5.28.]{Bou23} as follows:
\begin{definition}
    Let $M$ be a manifold and $c\colon I \to M$ be a curve on $M$. A (smooth) map $Z\colon I \to TM$ is called a \textit{(smooth) vector field on $c$} if $Z(t) \in T_{c(t)}M$ for all $t \in I$. The set of all smooth vector fields on $c$ is denoted by $\mathcal{V}(c)$. 
\end{definition}
To make sense of differentiation of vector fields on curves, we require an appropriate operator with certain properties. Since not all vector fields $Z \in \mathcal{V}(c)$ are of the form $X \circ c$ for some $X \in \mathcal{V}(M)$, we cannot simply use the Levi-Civita connection on $M$ and must introduce a different concept for differentiating such vector fields. This is precisely where the metric spray structure becomes essential.
\begin{remark}\label{rem:fo-connection}
It is a standard argument that every connection $\nabla$ on a \textbf{finite-dimensional} vector bundle is of \emph{first order} in the sense that for section $X,Y$ and $m \in M$, the  value $\nabla_XY (m)$ depends only on the value $X(m)$ and the first order jet of $Y$. Unfortunately, the finite-dimensional proof does not generalize without further assumptions. One can prove that every connection associated to a spray, cf.  Appendix \ref{App:diffgeo}, is a first order connection in this sense. It is unknown whether there exist connections on infinite-dimenisonal manifolds which are not of first order.
\end{remark}

If the Levi–Civita connection is induced by a metric spray, then one obtains a canonical differentiation operator along curves called \textit{the covariant derivative along c}.
\begin{lemma}
\label{lem: induced covariant derivative}
    Let $(M,g)$ be a Hesse-manifold. For every smooth curve $c\colon I \to M$, there exists a unique operator 
    \begin{equation*}
        \Dt\colon \mathcal{V}(c) \to \mathcal{V}(c),
    \end{equation*}
    called the \textit{covariant derivative along c}, that
    satisfies the following properties for all $Y,Z \in \mathcal{V}(c), X \in \mathcal{V}(M), g \in C^1(I,\mathbb{R}) \text{ and } a,b \in \mathbb{R}$: \rm
    \begin{enumerate}
        \item $\mathbb{R}$-linearity: $\Dt \big(aY +bZ\big) = a\Dt Y+b\Dt Z,$
        \item Leibniz rule: $\Dt \big(gZ\big) = g'Z +g\Dt Z,$
        \item Chain rule: $\big(\Dt \big(X \circ c\big)\big)(t) = \nabla _{c'(t)}U$ for all $t \in I$.
        \item Product rule: $\dt  g(Y,Z) = g(\Dt Y,Z)+g(Y,\Dt Z),$
    \end{enumerate}
    where $g(Y,Z) \in C^1(I,\mathbb{R})$ is defined by $g(Y,Z)(t) = g_{c(t)}(Y(t),Z(t))$.
\end{lemma}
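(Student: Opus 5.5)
We construct $\Dt$ locally in charts using the metric spray, and then check uniqueness and the four properties. On a Hesse manifold the metric spray $S_g$ has, in a chart $\varphi\colon U\to E$, a local representative $(x,v)\mapsto (x,v,v,\Gamma_x(v))$. Here $\Gamma_x$ is a continuous quadratic map, and by polarization it comes from a symmetric bilinear map $\Gamma_x(\cdot,\cdot)$. By Appendix \ref{App:diffgeo}, the associated connection is given locally by $\nabla_XY(x) = dY(x;X(x)) - \Gamma_x(X(x),Y(x))$, up to the paper's sign convention. By Remark \ref{rem:fo-connection} this connection is of first order.

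For $Z\in\mathcal V(c)$ with $c(I)\subset U$, and writing the local representatives as $c(t)=x(t)$ and $Z(t)=(x(t),z(t))$, we define
\[
\Big(\Dt Z\Big)(t) := z'(t) - \Gamma_{x(t)}\big(x'(t),z(t)\big).
\]
This expression involves only Bastiani derivatives of curves into $E$ and the smooth map $\Gamma$, so it is smooth in $t$.

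\textbf{Well-definedness.} The key point is that the formula is independent of the chart. This follows from the transformation rule of the spray under chart changes (the second-order condition on $S_g$), which is exactly the rule that makes $\nabla$ a globally defined connection. The same computation as for the connection applies, with the vector field $Y$ replaced by the curve $z$. I therefore obtain $\Dt$ on all of $I$ by gluing local definitions over a cover of $c(I)$ by charts. Smoothness is a local property, so the glued field is smooth.

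\textbf{Properties.} Linearity (1) and the Leibniz rule (2) are immediate from the formula. For the chain rule (3), take $Z=X\circ c$. Then $z'(t)=dX(x(t);x'(t))$, and the formula reproduces $\nabla_{c'(t)}X$; this uses first order of the connection, so that the value depends only on $c'(t)$. For the product rule (4), differentiate $t\mapsto g_{x(t)}(y(t),z(t))$ in the chart. Metric compatibility of the spray, which is the defining property of a metric spray, says precisely that $d g_x(w;y,z) = g_x(\Gamma_x(w,y),z)+g_x(y,\Gamma_x(w,z))$ with the appropriate signs. This matches the Christoffel terms, and (4) follows.

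\textbf{Uniqueness.} Suppose $\widetilde{\mathrm D}$ is another operator with properties (1)--(4), and work in a chart. If $Z$ were a finite combination $\sum f_i\,(X_i\circ c)$, then (1)--(3) would force the formula, as in finite dimensions. In infinite dimensions, however, no basis is available. I would instead write
\[
Z(t)=Z(t_0)+(t-t_0)W(t), \qquad W(t)=\int_0^1 z'(t_0+s(t-t_0))\,ds,
\]
which is smooth. Extend $Z(t_0)$ to a local constant vector field $X$ in the chart, cut off by a bump function on $M$ if one exists, or argued locally otherwise. Extend $W$ similarly along $c$, and apply (2) and (3). At $t_0$ the term $(t-t_0)W$ contributes $W(t_0)=z'(t_0)$, so any such operator equals the formula at $t_0$. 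Since the operator is only defined on $\mathcal V(c)$, I also need it to be local in $t$. I would derive this from (2) using smooth cutoff functions on $I$.

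The main obstacle is this uniqueness and locality step. The absence of bases and of bump functions on general locally convex manifolds means the finite-dimensional argument must be replaced by the Hadamard-type decomposition above, and the remaining vector field $W$ also lives on $c$. Handling it needs care: (2) applied to $g=t-t_0$ avoids computing $\widetilde{\mathrm D}W$ at $t_0$, since its coefficient vanishes there. This is what makes the argument work.
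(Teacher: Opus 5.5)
Your existence half is correct and is essentially the spray-based construction the paper imports from [Sch23, Prop.~4.36]:
\begin{itemize}
\item the local formula $z'-B_U(x,x',z)$, with the sign of \eqref{cod:loc_spray};
\item chart independence from the transformation law of the spray;
\item property (3) from the local formula for $\nabla$;
\item property (4) from the identity $g_x(B_U(x,w,y),z)+g_x(y,B_U(x,w,z))=-d_1g_U(x,y,z;w)$. This identity is not literally the definition, but it does follow from \eqref{eq:mspray} by polarization.
\end{itemize}

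The gap is in uniqueness. Your Hadamard decomposition, together with locality in $t$ (which you correctly derive from (2) using cutoffs on $I$), replaces the missing bases. It does not replace the missing bump functions on $M$. To invoke (3) you need a \emph{global} field $X\in\mathcal{V}(M)$ that agrees with the chart-constant field $z(t_0)$ near $c(t_0)$. A Hesse manifold may be modeled on an arbitrary locally convex space, where smooth bump functions need not exist. The fallback ``argued locally otherwise'' is not available either. $\widetilde{\mathrm{D}}$ is a single operator on $\mathcal{V}(c)$, and (1)--(4) say nothing about vector fields defined only on a chart domain, nor about restricting $\widetilde{\mathrm{D}}$ to $c|_J$.

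Here is what your argument actually establishes. Set $A:=\widetilde{\mathrm{D}}-\Dt$.
\begin{itemize}
\item By (1), (2), locality and the Hadamard trick, $A$ is a pointwise tensor $A(t)$ on $T_{c(t)}M$.
\item By (4), $A(t)$ is $g_{c(t)}$-skew.
\item By (3), $A(t)$ vanishes on the subspace $S_t:=\{X(c(t)) : X\in\mathcal{V}(M)\}$.
\item Skewness then gives $A(t)v\perp S_t$ for all $v$.
\end{itemize}
Conversely, adding to $\Dt$ any smooth skew tensor along $c$ that vanishes on $S_t$ preserves (1)--(4). So uniqueness holds exactly when such tensors must vanish. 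This is the case, for example, when $S_t$ is $\vvvert\cdot\vvvert_{c(t)}$-dense in $T_{c(t)}M$.

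That density holds if $M$ admits smooth bump functions. It also holds if $M$ is an open subset of a locally convex space, such as the open subsets of $C^\infty(\SSS^1,\R^2)$ in the numerical examples, because constant fields are then global. In those cases your argument is complete.

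In general you must do one of two things. Either add such a hypothesis, or state (3) for vector fields $X$ defined on open sets $U\supseteq c(J)$, requiring $\Dt Z(t)=\nabla_{c'(t)}X$ whenever $Z=X\circ c$ on $J$ (the ``extendible field'' version). With that local version, apply it to $Z-(t-t_0)\chi W$ for a cutoff $\chi$ on $I$, and your proof goes through verbatim.
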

\begin{proof}
    The existence and uniqueness of such an operator follows from \cite[Proposition 4.36]{Sch23}. The construction presented there is based on the metric spray and yields a covariant derivative along curves satisfying properties (i)–(iv). 
\end{proof}
\begin{remark}
    In the finite-dimensional setting, analogous constructions to Lemma \ref{lem: induced covariant derivative} are often carried out using local frames and coordinate representations, as for instance done by Boumal in \cite[Theorem 5.29.]{Bou23}. Such arguments rely on the existence of finite-dimensional bases of the tangent spaces.
    
    In contrast, the present approach is based on the spray-induced connection and does not require the use of local frames. The differentiation operator along curves is constructed intrinsically, without resorting to basis expansions. This makes the argument directly applicable in the weak infinite-dimensional Riemannian setting considered here.
\end{remark}
To relate the Riemannian Hessian to the second-order expansion along curves, we express it in terms of the induced covariant derivative.
Let $c\colon I \to M$ be a smooth curve with $c(0)=p$ and $c'(0)=v$. By the chain rule for the induced covariant derivative along c, we obtain
\begin{equation}
    \label{equation: covariant derivative Hessian}
       \Hess f(p)[v] = \nabla_v\nabla f= \Dt \nabla f(c(t))_{| t=0}.
\end{equation}
Using the representation of the Riemannian Hessian in terms of the induced covariant derivative \eqref{equation: covariant derivative Hessian} and the structural properties established in Lemma \ref{lem: induced covariant derivative}, the computation of the second derivative of $g = f\circ c$ proceeds exactly as in the finite-dimensional case in \cite[5.9]{Bou23}. As the argument uses only structural properties of the covariant derivative, it remains valid in the present weak Riemannian framework. Hence,
\begin{equation}
\label{equation: second derivative g}
    g''(t) = g_{c(t)}\big(\Hess f(c(t))[c'(t)],c'(t)\big)+g_{c(t)}\big(\nabla f(c(t)),c''(t)\big).
\end{equation}
Consequently, the second-order Taylor expansion of $f \circ c$ is given by
\begin{equation}
\label{equation: second order Taylor expansion of f c}
   f(c(t)) = f(p) + t g_p\big(\nabla f(p),v\big) + \frac{t^2}{2}g_p\big(\Hess f(p)[v],v\big) + \frac{t^2}{2}g_p\big(\nabla f(p),c''(0)\big) + \mathcal{O}(t^3).
\end{equation}
Having expressed the second-order Taylor expansion in terms of the Riemannian gradient and the Riemannian Hessian, we now adopt the notion of second-order critical points as introduced in the finite-dimensional setting by Boumal \cite[Section 6.1]{Bou23}. These points will be shown to coincide precisely with the local minimizers of a function, if in addition the Riemannian Hessian at these points is coercive. Establishing this result relies on the second-order Taylor expansion of $f\circ c$ (cf. \eqref{equation: second order Taylor expansion of f c}).
\begin{definition}
    Let $M$ be a $C^2$- manifold and $f\colon M \to \mathbb{R}$ be a $C^2$- function. A point $p\in M$ is called a \textit{second-order critical point} for $f$ if it is a critical point and
    \begin{equation*}
        (f\circ c)''(0) \geq 0
    \end{equation*}
    for all smooth curves $c$ on $M$ such that $c(0)=p$.
\end{definition} 
In direct analogy of the finite-dimensional case \cite[Proposition 6.3.]{Bou23}, one can show, that critical points are exactly the points where the Riemannian gradient vanishes and the Riemannian Hessian is positive semi-definite. The proof carries over directly to the weak Riemannian setting, as it relies solely  on the first and second derivatives of $f\circ c$, which we have established in \eqref{equation: first derivative g} and \eqref{equation: second derivative g}.  
\begin{proposition}
\label{proposition: second order points via grad und Hessian}
    Let $f\colon M\to \mathbb{R}$ be a smooth gaf on a Hesse manifold $M$. Then, $x$ is a second-order critical point if and only if $\nabla f(x) = 0$ and  $\Hess f(x) \succeq 0$. 
\end{proposition}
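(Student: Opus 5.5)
The plan is to prove both implications directly from the second-derivative formula \eqref{equation: second derivative g}, evaluated at $t=0$:
\begin{equation*}
(f\circ c)''(0) = g_p\big(\Hess f(p)[v],v\big) + g_p\big(\nabla f(p),c''(0)\big), \qquad v=c'(0).
\end{equation*}
Combined with Proposition \ref{prop: critical point derivative}, this reduces the statement to an identity valid for every smooth curve $c$ through $p$.

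For the forward direction, let $x$ be a second-order critical point. By definition $x$ is a critical point, so Proposition \ref{prop: critical point derivative} gives $\nabla f(x)=0$. The second summand of the formula above therefore vanishes, and
\begin{equation*}
(f\circ c)''(0)=g_x\big(\Hess f(x)[c'(0)],c'(0)\big)\ge 0
\end{equation*}
for every smooth curve with $c(0)=x$. It then suffices that every $v\in T_xM$ is the velocity of some smooth curve through $x$. Choose a chart $\varphi$ around $x$ with values in the locally convex model space $E$, set $w:=D_x\varphi(v)$, and take $c(t):=\varphi^{-1}(\varphi(x)+tw)$ for $|t|$ small. This curve is defined because the chart domain is open and $t\mapsto \varphi(x)+tw$ is continuous. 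It is smooth and satisfies $c'(0)=v$, so we obtain $g_x(\Hess f(x)[v],v)\ge 0$ for all $v$, that is, $\Hess f(x)\succeq 0$.

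For the reverse direction, assume $\nabla f(x)=0$ and $\Hess f(x)\succeq 0$. Proposition \ref{prop: critical point derivative} shows that $x$ is critical. For any smooth curve $c$ with $c(0)=x$, the formula above again loses its gradient term, leaving
\begin{equation*}
(f\circ c)''(0)=g_x\big(\Hess f(x)[c'(0)],c'(0)\big)\ge 0.
\end{equation*}
Hence $x$ is a second-order critical point.

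The only points needing care are the following.
\begin{enumerate}
\item Formula \eqref{equation: second derivative g} is justified on a Hesse manifold. It requires $t\mapsto \nabla f(c(t))$ to be a smooth vector field along $c$, which holds because $f$ is a smooth gaf. It also uses Lemma \ref{lem: induced covariant derivative}, which supplies the product rule and the chain rule identity $\Dt\nabla f(c(t))|_{t=0}=\Hess f(p)[v]$ as in \eqref{equation: covariant derivative Hessian}.
\item Velocity vectors exhaust $T_xM$. This is immediate from the definition of $T_xM$ via curves and the chart construction above.
\end{enumerate}
I expect neither to be a real obstacle, since no coercivity or completeness enters. The only subtlety is that the term involving $c''(0)$ must be discarded using $\nabla f(x)=0$ before concluding, since $c''(0)$ is not intrinsically defined.
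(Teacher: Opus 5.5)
Your proof is correct and matches the paper's argument: the paper carries Boumal's proof of \cite[Proposition 6.3]{Bou23} over verbatim, using only the derivative formulas \eqref{equation: first derivative g} and \eqref{equation: second derivative g} for $f\circ c$ together with Proposition \ref{prop: critical point derivative}, exactly as you do. One small correction to your closing remark: the $c''(t)$ in \eqref{equation: second derivative g} is the covariant acceleration $\Dt c'(t)$ supplied by Lemma \ref{lem: induced covariant derivative}, so it is intrinsically defined on a Hesse manifold; it drops out simply because $\nabla f(x)=0$.
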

We now turn to the proof of the main result. While the Riemannian gradient condition provides a necessary criterion, this theorem goes further by establishing when a critical point is indeed a minimizer. 
\begin{proposition}
\label{proposition: Hesse minimizer}
    Let $(M,g)$ be a Hesse manifold and let $f\colon M\to \mathbb{R}$ be a $C^2$-gaf. For $p \in M$, suppose that the Riemannian Hessian is coercive, i.e. there exists $\mu >0$ such that
    \begin{equation}
    \label{equation: Hessian coercivity}
        g_p(\Hess f(p)[v],v) \geq \mu \vvvert v\vvvert_p^2, \quad \forall v\in T_pM. 
    \end{equation}
    Then, any strict second-order critical point of $f$ is a strict local minimizer. 
\end{proposition}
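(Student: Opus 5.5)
The plan is to compare $f(q)$ with $f(p)$ along straight segments in a chart, using the second-order Taylor expansion \eqref{equation: second order Taylor expansion of f c} in integral form, and to let coercivity \eqref{equation: Hessian coercivity} control the remainder uniformly in a neighbourhood.

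\emph{Step 1 (chart and segments).} Fix a chart $\varphi\colon U\to \varphi(U)\subseteq E$ around $p$ with $\varphi(U)$ convex, and set $x_0:=\varphi(p)$ and $F:=f\circ\varphi^{-1}$. For $q\in U$ put $h:=\varphi(q)-x_0$ and define the curve
\[
c_q(t):=\varphi^{-1}(x_0+th), \qquad t\in[0,1].
\]
Taylor's formula with integral remainder for the classical $C^2$-function $t\mapsto F(x_0+th)$ gives
\[
f(q)-f(p)=dF(x_0;h)+\int_0^1(1-t)\,d^2F(x_0+th;h,h)\,dt .
\]
By Proposition \ref{prop: critical point derivative}, $p$ is critical, so $dF(x_0;h)=D_pf(D\varphi^{-1}h)=0$.

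\emph{Step 2 (identify the second derivative at $p$).} By \eqref{equation: second derivative g}, and since $\nabla f(p)=0$ kills the term $g_p(\nabla f(p),c''(0))$, we obtain
\[
d^2F(x_0;h,h)=(f\circ c_q)''(0)=g_p\big(\Hess f(p)[v],v\big)\ \ge\ \mu\vvvert v\vvvert_p^2, \qquad v:=D_{x_0}\varphi^{-1}h.
\]
Here the spray Christoffel term enters only paired with the vanishing gradient, so the chart second derivative at $p$ equals the Hessian form. This is where the Hesse structure and Lemma \ref{lem: induced covariant derivative} are used.

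\emph{Step 3 (uniform lower bound near $p$; the main obstacle).} It remains to find a neighbourhood $V\ni x_0$ such that
\[
d^2F(x;h,h)\ \ge\ \tfrac{\mu}{2}\,\vvvert D_{x_0}\varphi^{-1}h\vvvert_p^2 \qquad \text{for all } x\in V \text{ and all } h .
\]
With $x=x_0+th$ and integrating, this yields $f(q)-f(p)\ge \tfrac{\mu}{4}\vvvert v\vvvert_p^2>0$ whenever $q\neq p$, using nondegeneracy of $g_p$. Bastiani continuity of $(x,h)\mapsto d^2F(x;h,h)$ is joint continuity on $U\times E$. Both sides of the desired inequality are homogeneous of degree two in $h$, so it suffices to check $h$ on the $g_p$-unit sphere. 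That sphere is neither compact nor bounded in $E$, because $g_p$ is only weak, so continuity alone does not give uniformity.

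I would first try to exploit the structure of $d^2F$ along the segment. Write it as
\[
d^2F(x;h,h)=g_{c(t)}\big(\Hess f(c(t))[c'],c'\big)+g_{c(t)}\big(\nabla f(c(t)),\Gamma_{c(t)}(c',c')\big).
\]
Then estimate the differences $g_{c(t)}\big(\Hess f(c(t))[w],w\big)-g_p\big(\Hess f(p)[w],w\big)$ and the Christoffel term by $\varepsilon\vvvert w\vvvert_p^2$ for $c(t)$ close to $p$. This would use that the spray, the metric and $\nabla f$ are smooth, together with the strictness hypothesis in the statement, which gives a strict inequality that can be perturbed. If this uniform estimate cannot be derived from smoothness alone, I would isolate precisely the needed local uniformity of the Hessian form with respect to $\vvvert\cdot\vvvert_p$ as the point where the weak setting bites. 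That is the step I expect to require genuine extra work, beyond copying the finite-dimensional proof, where compactness of the unit sphere does this job automatically.
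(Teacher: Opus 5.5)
The gap is Step 3, which you leave open, and it cannot be closed. The uniform bound does not follow from the hypotheses; in fact the proposition as stated is false. Your Steps 1 and 2 are correct, and the integral form of the remainder isolates exactly what would be needed.

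\emph{Counterexample.} Take $M=\ell^2$ with the constant weak metric $g_x(v,w)=\sum_n n^{-2}v_nw_n$. Since $g$ does not depend on $x$, $\Gamma=0$ satisfies \eqref{eq:mspray}. So $S(x,v)=(x,v,v,0)$ is a metric spray, $M$ is a Hesse manifold, and $\nabla_uY=dY(\cdot\,;u)$. Let $\psi(s)=\tfrac{s^2}{2}(1-s^2)e^{-s^2}$, so that $\psi(0)=\psi'(0)=0$, $\psi''(0)=1$ and $\psi(2)<0$. Put $f(x)=\sum_n n^{-4}\psi(nx_n)$.
\begin{enumerate}
\item By dominated convergence, $f$ is $C^2$ with $d^2f(x;v,w)=\sum_n n^{-2}\psi''(nx_n)v_nw_n$. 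It is even Fr\'echet-$C^2$, since $d^2f$ is Lipschitz in operator norm.
\item $\nabla f(x)=(n^{-1}\psi'(nx_n))_n\in\ell^2$, so $f$ is a gaf, and $\Hess f(x)[v]=(\psi''(nx_n)v_n)_n$.
\item Hence $\nabla f(0)=0$ and $\Hess f(0)=\mathrm{id}$. Coercivity holds with $\mu=1$, and $0$ is a strict second-order critical point in any reading of \emph{strict}.
\item Yet $f(\tfrac2k e_k)=k^{-4}\psi(2)<0=f(0)$, while $\tfrac2k e_k\to0$ in $\ell^2$.
\end{enumerate}
In your notation, the required inequality $d^2F(x;e_k,e_k)\ge\tfrac{\mu}{2}\vvvert e_k\vvvert_0^2$ reads $\psi''(kx_k)\ge\tfrac12$. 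This fails at $x=\tfrac{s}{k}e_k$ whenever $\psi''(s)<\tfrac12$, and such points tend to $0$. The strictness assumption cannot rescue this, because coercivity already gives $(f\circ c)''(0)>0$ whenever $c'(0)\neq0$. What is missing is a comparison between two things: the variation of $d^2F$ near $x_0$, which is controlled only in the topology of $E$, and the weak norm $\vvvert\cdot\vvvert_p$ in which coercivity holds. This is the two-norm discrepancy familiar from optimal control.

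The paper's own proof does not supply the missing idea either. It expands along rays $c(t)=\phi^{-1}(tx)$, but then uses a single threshold $\xi$ in $|R(t)|\le\tfrac{t^2}{2}\mu A^2$ for all directions $x$ when forming $Y_\phi$, although $R$ depends on $x$. That is exactly the uniformity you could not establish. It also inserts the upper bound $A^2$ into a lower estimate. Finally, its bound $f(p)+\tfrac{t^2\mu}{2}A^2(\vvvert x\vvvert_{\phi(p)}^2-1)$ lies below $f(p)$ when $\vvvert x\vvvert_{\phi(p)}<1$, so $f(c(t))>f(p)$ does not follow.

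The honest repair is an extra hypothesis. For example, assume there are a convex neighbourhood $V$ of $x_0$ and $\varepsilon<\mu$ with
\[
\bigl|d^2F(x;h,h)-d^2F(x_0;h,h)\bigr|\le\varepsilon\,\vvvert D_{x_0}\varphi^{-1}h\vvvert_p^2\qquad\text{for all }x\in V,\ h\in E .
\]
Under this assumption your Steps 1--3 go through as written. They yield $f(q)-f(p)\ge\tfrac{\mu-\varepsilon}{2}\vvvert D_{x_0}\varphi^{-1}h\vvvert_p^2>0$ for $q\neq p$ with $\varphi(q)\in V$, which is a complete proof of the corrected statement.
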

\begin{proof}
Let $\phi\colon U_\phi \to V_\phi$ be a chart around $p$ with $\phi(p) = 0$. 
Since $V_\phi$ is an open subset of a locally convex space, there exists an open convex neighborhood 
$W_\phi \subset V_\phi$ containing $0$.

For any $x \in W_\phi$, define a smooth curve on $M$ via
$
c(t):= \phi^{-1}(t x).
$
%The chain rule yields
%\[
%c'(0) = \mathrm{D}_{\phi(p)} \phi^{-1}(x) \in T_p M.
%\]
By the second-order Taylor expansion of $f$ along $c$ (cf.~\eqref{equation: second order Taylor expansion of f c}) and the fact that $p$ is a critical point, we obtain
\[
f(c(t)) = f(p) + \frac{t^2}{2} \, g_p\bigl( \Hess f(p)[c'(0)], c'(0) \bigr) + R(t),
\]
where $R(t) = \mathcal{O}(t^3)$, i.e.\ $\lim_{t \to 0} R(t)/t^3 = 0$.
By the coercivity of the Hessian at $p$, we have
\[
g_p\bigl( \Hess f(p)[c'(0)], c'(0) \bigr)
  \;\geq\; \mu \, \vvvert c'(0)\vvvert_p^2
  \;=\; \mu \,\bigl\vvvert \mathrm{D}_{\phi(p)}\phi^{-1}(x)\bigr\vvvert_p^2,
\]
and therefore
\begin{equation}\label{eq:taylor-lower-bound}
f(c(t)) \;\geq\; f(p) + \frac{t^2 \mu}{2} \,\bigl\vvvert\mathrm{D}_{\phi(p)}\phi^{-1}(x)\bigr\vvvert_p^2 + R(t).
\end{equation}
On $E_\phi$ we define a norm as follows:
\begin{equation*}
    \vvvert x\vvvert_\phi : = \sqrt{g_p\big(D_{\phi(p)}\phi^{-1}(x),D_{\phi(p)}\phi^{-1}(x)\big)}, \quad x\in E_\phi. 
\end{equation*}
By construction, with respect to this norm, the linear mapping
\begin{equation*}D_{\phi(p)}\phi^{-1} \colon \big(E_\phi,\vvvert \cdot\vvvert_{\phi(p)}\big) \to \big(T_pM,g_p\big)
\end{equation*}
is continuous, where we identified $T_{\phi(p)}V_\phi \cong E_\phi$. Bounding by the operator norm $A>0$,
\[
\bigl\vvvert\mathrm{D}_{\phi(p)}\phi^{-1}(x)\bigr\vvvert_p^2
  \;\leq\; A^2\vvvert x\vvvert_{\phi(p)}^2
\quad\text{for all } x \in W_\phi.
\]
Since $R(t)=\mathcal{O}(t^3)$, there exists $\xi > 0$ such that
\[
|R(t)| \;\leq\; \frac{t^2}{2} \,\mu A^2
\quad\text{for all } t \in (0,\min\{1,\xi\}).
\]
Using \eqref{eq:taylor-lower-bound}, we obtain
\begin{align*}
f(c(t))
  &\geq f(p) + \frac{t^2 \mu}{2} \,A^2 + R(t) \\
  &\geq f(p) + \frac{t^2 \mu}{2} A^2 \vvvert x\vvvert_{\phi(p)}^2 - \frac{t^2 \mu}{2} A^2 
  = f(p) + \frac{t^2 \mu}{2} A^2\,(\vvvert x\vvvert_{\phi(p)}^2 - 1).
\end{align*}
Now restrict to $x \in W_\phi$ with $\vvvert x\vvvert_{\phi(p)} < 1$. Then $\vvvert x\vvvert_{\phi(p)}^2 - 1 < 0$, and thus
\[
f(c(t)) > f(p)
\quad\text{for all } t \in (0,\min\{1,\xi\}) \text{ and all } x \in W_\phi \text{ with } 0 < \vvvert x\vvvert_{\phi(p)} < 1.
\]
Define
\[
Y_\phi := \Bigl\{\phi^{-1}(t x) \,\Big|\,
          t \in (0,\min\{1,\xi\}),\; x \in W_\phi,\; \vvvert x\vvvert_{\phi(p)} < 1\Bigr\}.
\]
Since $\phi$ is a homeomorphism and the set 
$\{t x \mid t \in (0,\min\{1,\xi\}),\, x \in W_\phi,\ \vvvert x\vvvert_{\phi(p)} < 1\}$ 
is open in $V_\phi$ with respect to the locally convex topology, the set $Y_\phi$ is open in $M$. 
By the preceding estimate, we have $f(q) > f(p)$ for all $q \in Y_\phi$, so $p$ is a strict local minimizer of $f$.
\end{proof}
\begin{remark}
    The coercivity of the Riemannian Hessian represents a key difference compared to the finite-dimensional case. This is well known, see e.g. \cite{Eli71} for the use of coercivity conditions on Banach manifolds in relation to Palais and Smales condition (C). Condition (C) replaces compactness arguments which are not available in our setting. In particular, coercivity does not follow from the positive definiteness of the Riemannian Hessian and must therefore be assumed separately. 
\end{remark}
Having established first- and second-order optimality conditions on weak Riemannian manifolds, we now turn to a concrete descent method. In Section ~\ref{sec: num exp}, we will apply these optimality conditions to specific examples alongside this method.

\section{The Riemannian Gradient Descent Method}
\label{sec: RGD}
In this chapter, we introduce a basic descent method, namely the Riemannian gradient descent (RGD) algorithm, and establish convergence results for this method. 
Before we can state the algorithm, we need an auxiliary structure. In finite dimensional optimization on manifolds \cite[Chapter 3.6]{Bou23} one defines 
\begin{definition} \label{defn:boumal_retract}
A smooth map $\mathcal{R} \colon TM \rightarrow M$ is called a \emph{retraction} if for every $v \in TM$ the smooth curve $c_v(t):= \mathcal{R}(tv)$ satisfies $c(0)=x$ and $\dot{c}(0)=v$. 
\end{definition}
We deviate slightly from loc.cit.~and will allow retractions defined only on an open neighborhood $\Omega$ of the zero-section in $TM$. However, even with this relaxation, we will see that retractions are not sufficient as the next example shows.

\begin{example}\label{ex:bad:retraction}
Let $\mathbb{S}^1\subseteq \R^2$ be the unit circle. We recall from \cite[Example 3.8]{Sch23} that the diffeomorphism group $\mathrm{Diff}(\mathbb{S}^1)$ is an infinite-dimensional Lie group not modeled on a Banach space. The tangent bundle of the Lie group is trivial, \cite[Lemma 3.12 (b)]{Sch23}, i.e.\ the group multiplication $m$ induces a diffeomorphism
$$\Phi^{-1} \colon T\mathrm{Diff} (\mathbb{S}^1) \rightarrow \mathcal{V}(\mathbb{S}^1) \times \mathrm{Diff} (\mathbb{S}^1),\quad \Phi^{-1}(v_g):=(g,Dm((0_{g^{-1}},v_g)))$$
where the vector field $\mathcal{V}(M)$ is identified with the tangent space at the identity. Further, the Lie group exponential of $\mathrm{Diff}(\mathbb{S}^1$ is the map $\exp \colon \mathcal{V} (\mathbb{S}^1) \rightarrow \mathrm{Diff} (\mathbb{S}^1), X \mapsto \text{Fl}^X_1,$ sending a vector field to its time $1$-flow. Now the map 
$$\mathcal{R} \colon T\mathrm{Diff}(\mathbb{S}^1) \rightarrow \mathrm{Diff} (\mathbb{S}^1), v_g \mapsto g \circ \exp (Dm((0_{g^{-1}},v_g)))$$
is smooth and satisfies $\mathcal{R}(0_g)=g\circ \exp(0_{\mathrm{id}})=g\circ \mathrm{id} =g$. Exploiting that $Dm(0_{g^{-1}},\cdot)$ is continuous linear and $D_0\exp = \mathrm{id}_{\mathcal{V}(M)}$, the chain rule yields
$$\left.\frac{d}{dt}\right|_{t=0}\mathcal{R}(tv_g)=Dm\left(0_{g},D_0\exp \left(\left.\frac{d}{dt}\right|_{t=0} tDm(0_{g^{-1}},v_g)\right)\right)=v_g.$$
Hence $\mathcal{R}$ is a retraction, but it is well known that this retraction does not restrict to a local diffeomorphism on any zero-neighborhood in $T_g \mathrm{Diff} (\mathbb{S}^1)$ to any neighborhood of $g \in \mathrm{Diff}(\mathbb{S}^1)$. Indeed one can show, see e.g. \cite[Example 3.42]{Sch23} for details, that in any neighborhood of $g$ there are infinitely many points not in the image of $\mathcal{R}$. One can indeed even find continuous curves which intersect the image of $\mathcal{R}|_{T_g\mathrm{Diff} (\mathbb{S}^1)}$ only in $g$. A similar result holds for diffeomorphism groups of arbitrary compact manifolds of dimension $\geq 2$. 
\end{example}

Summing up, Example \ref{ex:bad:retraction} shows that the retraction condition from Definition \ref{defn:boumal_retract} will lead to mappings on manifolds whose image fails to be a neighborhood of the foot point. In other words, in infinite-dimensions the retraction property fails to give mappings allowing us to step into all directions from the footpoint. This is certainly undesirable, whence the following definition is more suitable: 

\begin{definition}
Let $M$ be a smooth manifold. Then a smooth map $\Sigma \colon TM \supseteq \Omega \rightarrow M$ defined on $\Omega$ an open neighborhood of the zero-section is called \emph{local addition} if it satisfies 
\begin{enumerate}
\item $\Sigma (0_x)=x$ for all $x \in M$,
\item the map $\theta :=(\pi_M, \Sigma) \colon \Omega \rightarrow M \times M, \theta(v_x)=(x,\Sigma(v_x))$ induces a diffeomorphism onto its open image $\theta (\Omega) \subseteq M \times M$.
\end{enumerate}
We call the local addition normalized if $D(\Sigma|_{\Omega \cap T_x M})_{0_x} =\mathrm{id}_{T_xM}$ for all $x\in M$. 
\end{definition}

Before we give examples of (nontrivial) retractions and local additions in Example \ref{Ex:loc_add_strong}, we illustrate first the relation between local additions and retractions.

\begin{lemma}\label{lem:ladd_ret}
Let $M$ be a smooth manifold. 
\begin{enumerate}
\item Every local addition $\Sigma\colon \Omega \rightarrow M$ induces a normalized local addition $\Sigma_N$ which is a retraction on $\Omega$.
\item If, in addition, $M$ is a paracompact Banach manifold, then every retraction $\mathcal{R}$ induces a normalized local addition.
\item If, in addition, $(M,g)$ is a paracompact strong Riemannian manifold, then every local addition induces a (normalized) local addition on $TM$. 
\end{enumerate}
\end{lemma}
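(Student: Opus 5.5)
For (1), I would normalize the given local addition fibrewise. For each $x\in M$, set $\Sigma_x := \Sigma|_{\Omega\cap T_xM}$ and $A_x := D(\Sigma_x)_{0_x}\colon T_xM\to T_xM$. Since $\theta$ is a diffeomorphism onto its open image and $\theta$ maps the fibre $\Omega\cap T_xM$ onto the open slice $\theta(\Omega)\cap(\{x\}\times M)$, each $\Sigma_x$ is a diffeomorphism onto an open neighbourhood of $x$. Hence $A_x$ is an isomorphism of topological vector spaces. I would then check that $x\mapsto A_x$ assembles into a smooth vector bundle automorphism $A\colon TM\to TM$. This is the fibre derivative of $\Sigma$ along the zero section, so in a chart it is $(x,v)\mapsto d_2\Sigma(x,0;v)$, which is smooth by Bastiani calculus. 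The inverse $A^{-1}$ is obtained in the same way from the fibre derivative of $\mathrm{pr}_2\circ\theta^{-1}$ at the diagonal, so $A^{-1}$ is smooth too.

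With $A$ in hand, I would define $\Omega_N := A^{-1}(\Omega)$, which is an open neighbourhood of the zero section, and set $\Sigma_N := \Sigma\circ A$. Then $\Sigma_N(0_x)=x$, and $(\pi_M,\Sigma_N)=\theta\circ A$ is again a diffeomorphism onto $\theta(\Omega)$. The chain rule gives $D(\Sigma_N|_{T_xM})_{0_x} = A_x\circ A_x^{-1}=\mathrm{id}$. This identity is exactly the retraction property $\frac{d}{dt}|_{t=0}\Sigma_N(tv)=v$, which proves (1).

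For (2), I would take a retraction $\mathcal R$ and look at $\theta=(\pi_M,\mathcal R)$ near the zero section. Its derivative at $0_x$, in the splitting $T_{0_x}TM\cong T_xM\oplus T_xM$, is block triangular of the form $\begin{pmatrix}\mathrm{id}&0\\ *&\mathrm{id}\end{pmatrix}$, hence invertible. On a Banach manifold the inverse function theorem then makes $\theta$ a local diffeomorphism on a neighbourhood of each $0_x$. The main obstacle is passing from these local statements to global injectivity on a whole neighbourhood of the zero section. Here paracompactness enters, together with the fact that $\theta$ is injective on the zero section: by the standard tubular-neighbourhood type lemma (Lang's argument), a local homeomorphism that is injective on a closed subset of a paracompact (hence normal and metrizable) space is injective on some neighbourhood of it. I would use a locally finite cover and shrink $\Omega$ accordingly. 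The resulting local addition is already normalized because $D(\mathcal R|_{T_xM})_{0_x}=\mathrm{id}$.

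For (3), I would read the claim as producing a local addition on the manifold $TM$. On a strong Riemannian manifold, the metric spray exists and gives a Riemannian exponential map $\exp$, defined on an open neighbourhood of the zero section of $TM$. By (2), applied to $\exp$ (the underlying space is a paracompact Hilbert manifold), $\exp$ is a normalized local addition. A local addition on $TM$ is then obtained by applying (2) to the induced retraction on $TTM$. Two routes are available for building that retraction: take the exponential of the Sasaki metric, which is a strong metric on $TM$, or use the connection splitting $TTM\cong\pi^*TM\oplus\pi^*TM$ and transport the fibre component by parallel transport along $\exp$-geodesics. Since $TM$ is again a paracompact Hilbert manifold, (2) applies to either choice and yields a normalized local addition on $TM$. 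The given local addition only serves, via (1), to fix the normalization.
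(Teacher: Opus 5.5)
\textbf{Parts (1) and (2).} These are correct and follow the paper's route. The paper only cites \cite[A.14]{AaHaS20} for the normalization, which is your fibre-derivative construction, and \cite[Lemma 3.15]{KaS25} for the combination of inverse function theorem and paracompactness shrinking. Two small repairs are needed:
\begin{itemize}
\item In (1) you must set $\Sigma_N:=\Sigma\circ A^{-1}$ on $\Omega_N:=A(\Omega)$. As you wrote it, $\Sigma\circ A$ has fibre derivative $A_x^2$ at $0_x$, not the identity.
\item In (2) the separation lemma needs $\theta$ restricted to the zero section to be a homeomorphism onto its image, not merely injective. This does hold here, since $0_x\mapsto(x,x)$ maps onto the closed diagonal.
\end{itemize}

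\textbf{Part (3): a genuine gap.} You have proved a different statement. In the paper, ``a local addition on $TM$'' means a local addition whose domain is \emph{all} of $TM$, rather than just an open neighbourhood $\Omega$ of the zero section. This is what makes local additions comparable to Boumal's retractions $\mathcal{R}\colon TM\to M$ in Definition~\ref{defn:boumal_retract}. The paper's proof produces such a map from the given $\Sigma$ via the argument of \cite[Lemma 10.2]{Mic80}. Your construction of a local addition for the manifold $TM$ (Sasaki exponential or parallel transport on $TTM$) never uses the given $\Sigma$. That should have signalled that the target was misread.

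The missing idea is a fibrewise shrinking diffeomorphism.
\begin{itemize}
\item Because $g$ is strong, each $\vvvert\cdot\vvvert_x$ induces the model topology on $T_xM$, locally uniformly in charts. So near each point, $\Omega$ contains $g$-discs of a fixed radius.
\item Paracompactness (smooth partitions of unity on Hilbert manifolds) then yields a smooth $\epsilon\colon M\to(0,\infty)$ with $D_\epsilon:=\{v\in TM:\vvvert v\vvvert_{\pi_M(v)}<\epsilon(\pi_M(v))\}\subseteq\Omega$.
\item Define
\[
\psi\colon TM\to D_\epsilon,\qquad \psi(v)=\frac{\epsilon(x)\,v}{\sqrt{\epsilon(x)^2+\vvvert v\vvvert_x^2}},\quad v\in T_xM .
\]
This is a fibre-preserving diffeomorphism with inverse $w\mapsto \epsilon(x)\,w/\sqrt{\epsilon(x)^2-\vvvert w\vvvert_x^2}$, and $D(\psi|_{T_xM})_{0_x}=\mathrm{id}$.
\item Hence $\Sigma\circ\psi$ is defined on all of $TM$. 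Moreover, $(\pi_M,\Sigma\circ\psi)=\theta\circ\psi$ is a diffeomorphism onto the open set $\theta(D_\epsilon)$.
\item $\Sigma\circ\psi$ is normalized whenever $\Sigma$ is, which you can arrange first via (1).
\end{itemize}

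This is exactly where the strong-metric hypothesis enters. For a non-strong metric, one can choose $\Omega$ so that no $g_x$-disc fits inside $\Omega\cap T_xM$, so no such $\epsilon$ exists.
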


\begin{proof}
(1) By \cite[A.14]{AaHaS20} every local addition can be modified to yield a normalized local addition $\Sigma_N \colon \Omega \rightarrow M$. Shrinking $\Omega$ we may assume without loss of generality, that $\Omega_x := T_x M \cap \Omega$ is star-shaped around $0_x$. Hence, for $v\in \Omega_x$ we have $\Sigma_N(0v)=x$ and since $\Sigma_N$ is normalized, the chain rule yields $\left.\frac{d}{dt}\right|_{t=0} \Sigma_N (tv)=v$. So $\Sigma_N$ is a retraction on $\Omega_x$ for every $x\in M$. 
(2) Let $\mathcal{R} \colon \tilde{\Omega} \rightarrow M$ be a retraction. Since $\left.\frac{d}{dt}\right|_{t=0}\mathcal{R}(tv)=v$ for all $v\in TM$ we see that the derivative of $\mathcal{R}|_{\tilde{\Omega}\cap T_xM}$ at the zero-section is the identity map. Then paracompactness and the inverse function theorem show that we can shrink $\Omega$ to an open neigborhood on which $\mathcal{R}$ restricts to a normalized local addition. The details are recorded in \cite[Lemma 3.15]{KaS25}.
(3) Finally, if we are given a  local addition $\Sigma \colon \Omega \rightarrow M$ on some open neighborhood of the zero-section, it can be extended using the argument in \cite[Lemma 10.2]{Mic80} to a (normalized) local addition on all of $TM$. 
\end{proof}
Lemma \ref{lem:ladd_ret} implies that for finite-dimensional (paracompact) manifolds normalized local additions are equivalent to retractions as in \cite{Bou23}. The point in having a retraction is that starting at $x$ we can locally reach every point near to $x$ by a suitable curve. In infinite-dimensions a local addition assures this, whence the stronger concept is preferred. 
\begin{example}\label{Ex:loc_add_strong}
Let $(M,g)$ be a strong Riemannian manifold. Then as in finite-dimensions, $M$ admits a Riemannian exponential map $\exp \colon TM \supseteq \Omega \rightarrow M$, cf. \cite[Chapter 1.6]{Kli95}. The Riemannian exponential is smooth and satisfies $D(\exp|_{\Omega \cap T_xM})_{0_x}=\mathrm{id}_x$ for $x\in M$, i.e. it is a normalized local addition (this is the standard source for retractions in finite dimensions).

For any compact manifold $K$, the set of smooth functions $C^\infty (K,M)$ can then be endowed with the structure of a Fr\'echet manifold such that $TC^\infty (K,M)\cong C^\infty (K,TM)$. Here the identification takes $T_hC^\infty (K,M) \cong \{F \in C^\infty (K,TM) \colon \pi_M \circ F = h\}$ . Further, the pushforward $\exp_\ast \colon C^\infty  (K,\Omega)\rightarrow C^\infty (K,M), \exp_\ast (g)=\exp \circ g$ is smooth. Since also the pushforwards of the associated mappings $\theta = (\pi_M,\exp)$ and $\theta^{-1}$ are smooth, we deduce that $\exp_\ast$ is a local addition. The identification of the tangent bundle yields, see \cite[2.22]{Sch23},  $D(\exp_\ast) = (D\exp)_\ast$, whence $\exp_\ast$ is a normalized local addition on $C^\infty(K,M)$. 
\end{example}
For a $C^1$- weak Riemannian manifold the Riemannian gradient descent method can be formulated as shown in Algorithm 1 below.
\begin{algorithm}
\label{algorithm: RGD}
    \caption{Riemannian Gradient Descent Method on $(M,g)$}
    \begin{flushleft}

    \textbf{Input:} $x_0 \in M$, $f\in C^1(M,\R)$, normalized local addition $\Sigma$ on $M$.\\
    
    \textbf{For} $k = 0,1,2,...$\\
    \quad pick a step-size $\alpha_k >0$ and set \\
    \quad $x_{k+1} = \Sigma_{x_k}(s_k)$ for $s_k = -\alpha_k \nabla f(x_k)$
    \end{flushleft}
\end{algorithm}

Our exposition follows the structure of Boumal \cite[Section 4.3]{Bou23}, where RGD is discussed in the finite-dimensional setting. We show that, under an additional assumption, these results carry over to the weak Riemannian setting. In particular, we show that every accumulation point of the sequence of iterates generated by Algorithm~1. is a critical point of $f$ and that the norms of the corresponding gradients converge to zero.

In order to prove this result, we require a notion of continuity for the Riemannian gradient $\nabla f$. In particular, we need $\nabla f$ to be sequentially continuous. This property cannot be inferred directly from the defining property of the Riemannian gradient, due to the incompatibility of the topologies on the tangent bundle of a weak Riemannian manifold.

In the following we will show that $\nabla f$ is sequentially continuous whenever the sequence $\big(\nabla f(p_n)\big)_{n\in \mathbb{N}}$ converges in $TM$ for a convergent sequence $(p_n)_{n\in \mathbb{N}} \subset M$. 
\begin{lemma}
    Let $(M,g)$ be a weak Riemannian $C^1$-manifold, and let $\big(p_n\big)_{n\in \mathbb{N}}\subset M$ be a sequence converging to $p\in M$. Let $f\colon M \to \mathbb{R}$ be a gaf such that the sequence $\big(\nabla f(p_n)\big)_{n \in \mathbb{N}}$ converges in $TM$, then 
    \begin{equation*}
        \lim \limits_{n\to \infty} \nabla f(p_n) = \nabla f(p).
    \end{equation*}
\end{lemma}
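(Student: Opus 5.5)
Let $w\in TM$ denote the limit of $\nabla f(p_n)$ in $TM$. The plan is to show that $w$ lies in the fibre over $p$ and satisfies the defining identity of the gradient at $p$; nondegeneracy of $g_p$ then forces $w=\nabla f(p)$.

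First, the bundle projection $\pi_M\colon TM\to M$ is continuous. Hence $\pi_M(w)=\lim_n \pi_M(\nabla f(p_n))=\lim_n p_n=p$, because limits in the Hausdorff manifold $M$ are unique. Thus $w\in T_pM$.

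Next, fix $v\in T_pM$. I want sequences $v_n\in T_{p_n}M$ with $v_n\to v$ in $TM$, so that $(\nabla f(p_n),v_n)\to(w,v)$ in the Whitney sum $TM\oplus TM$. To build them, take a chart $\phi$ around $p$; for large $n$ the points $p_n$ lie in its domain. Identify $T(U_\phi)\cong V_\phi\times E_\phi$ and let $v_n$ be the vector with chart representation $(\phi(p_n),d\phi(v))$, where $d\phi(v)\in E_\phi$ is the principal part of $v$. Then $v_n\to v$ in $TM$. In the local trivialization $V_\phi\times E_\phi\times E_\phi$ of $T(U_\phi)\oplus T(U_\phi)$, the pair $(\nabla f(p_n),v_n)$ converges to $(w,v)$.

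Now evaluate the two sides of the gradient identity. Since $g$ is smooth, hence continuous, on $TM\oplus TM$, we get $g_{p_n}(\nabla f(p_n),v_n)\to g_p(w,v)$. On the other hand, $g_{p_n}(\nabla f(p_n),v_n)=D_{p_n}f(v_n)=df(v_n)$. Because $f$ is $C^1$, the tangent map $Tf\colon TM\to T\R$ is continuous; locally this is continuity of $df\colon V_\phi\times E_\phi\to\R$ in Bastiani calculus. Therefore $df(v_n)\to D_pf(v)=g_p(\nabla f(p),v)$, and uniqueness of limits in $\R$ gives $g_p(w,v)=g_p(\nabla f(p),v)$ for all $v\in T_pM$. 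Taking $v=w-\nabla f(p)$ and using positive definiteness of $g_p$ yields $w=\nabla f(p)$. The same argument works in any Hausdorff topology, so no first-countability or metrizability assumptions are needed.

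The main obstacle is conceptual rather than computational. Continuity of $g$ must be used on the full Whitney sum with the manifold topology, not the weak norm topology, since the norms $\vvvert\cdot\vvvert_{p_n}$ do not control convergence in $TM$. The construction of the approximating vectors $v_n$ via a chart is where this care is needed; the rest is routine.
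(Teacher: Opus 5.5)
Your proof is correct and is essentially the paper's argument: continuity of $\pi_M$ places the limit in $T_pM$. Then, in a chart, joint continuity of $g$ and $df$ lets you pass to the limit in $g_{p_n}(\nabla f(p_n),v_n)=D_{p_n}f(v_n)$, and nondegeneracy of $g_p$ finishes. Your vectors $v_n$ with constant principal part are exactly what the paper means when it writes $D_{p_n}f(v)$ while ``suppressing the identification'' $TU\cong U\times E$, so you have made that implicit step explicit.
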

\begin{proof}
     %As $\lim \limits_{n\to \infty} \nabla f(p_n) \in TM$, there exists $q \in M$ such that $\lim \limits_{n\to \infty} \nabla f(p_n) \in T_qM.$ To prove the statement, we begin by showing that $p = q$. Next, we demonstrate that $\lim \limits_{n \to \infty} \nabla f (p_n) = \nabla f(q)$. Together this implies $\lim \limits_{n\to \infty} \nabla f(p_n) = \nabla f(p)$.
     
     Since $\left( \nabla f(p_n)\right)_{n\in \mathbb{N}}$ converges in $TM$ and $\pi _M$ is continuous, it follows that 
     \begin{equation*}
        \lim \limits_{n \to \infty} \pi _M\big ( \nabla f(p_n)\big) = \pi_M\big(\lim \limits_{n\to \infty}\nabla f(p_n)\big) = p.
    \end{equation*}
    We localize in a chart $(\phi,U)$ of $M$ around $p$. So without loss of generality, $TU= U \times E$ (suppressing the identification). As $g$ and $Df$ are continuous, we obtain $\forall v\in T_pM$
    \begin{align*}
         g_p(\nabla f(p), v) = Df(v) = \lim \limits_{n\to \infty}D_{p_n}f(v) = \lim \limits_{n\to \infty}g_{p_n}(\nabla f(p_n), v)= g_p(\lim \limits_{n \to \infty}\nabla f(p_n), v), 
     \end{align*}
     Since $g_p$ is nondegenerate we conclude that $\lim \limits_{n\to \infty} \nabla f (p_n) = \nabla f (p)$.
\end{proof}
With this result, the sequential continuity of the Riemannian gradient can now be defined solely by requiring that the Riemannian gradients of convergent sequences converge within the tangent bundle.  
\begin{corollary}
\label{corollary: Riemannian gradient sequentially cts}
    Let $\big(M,g\big)$ be a weak Riemannian $C^r$-manifold, $r\geq 1$ and let $f\colon M \to \mathbb{R}$ be a gaf. If for all $(p_n)_{n\in \mathbb{N}}\subset M$ that converge in $M$, $f$ is such that $\lim \limits_{n\to \infty}\nabla f(p_n) \in TM$, then $\nabla f$ is sequentially continuous.
\end{corollary}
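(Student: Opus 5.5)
The corollary follows directly from the preceding lemma, so I would argue by unpacking the definition of sequential continuity. Let $(p_n)_{n\in\mathbb{N}}\subset M$ be any sequence converging to some $p\in M$. By hypothesis, the sequence $\big(\nabla f(p_n)\big)_{n\in\mathbb{N}}$ converges to some element $w\in TM$. Every $f$ considered here is in particular a $C^1$-gaf on a weak Riemannian manifold of class at least $C^1$, so the preceding lemma applies. It gives $w=\lim_{n\to\infty}\nabla f(p_n)=\nabla f(p)$. Since the sequence was arbitrary, $\nabla f\colon M\to TM$ maps convergent sequences to sequences converging to the image of the limit, which is precisely sequential continuity.

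The only points to check are that the hypotheses of the lemma are met. First, $C^r$ with $r\geq 1$ implies $C^1$, so no extra regularity is needed. Second, the limit $w$ a priori lies in some fibre $T_qM$. Inside the lemma's proof this is settled by continuity of $\pi_M$, which forces $q=\lim\pi_M(\nabla f(p_n))=\lim p_n=p$. Identification with $\nabla f(p)$ then uses continuity of $g$ and $Df$ in a chart $TU\cong U\times E$ together with nondegeneracy of $g_p$.

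One subtlety is worth mentioning. Convergence $\nabla f(p_n)\to w$ in $TM$ is with respect to the manifold topology of $TM$, not the topology induced by $\vvvert\cdot\vvvert$. This is exactly why the hypothesis must be imposed rather than derived: the weak metric cannot upgrade weak-type information $g_{p_n}(\nabla f(p_n),v)\to g_p(\nabla f(p),v)$ to convergence in $TM$. Hence I expect no real obstacle. The corollary is the lemma restated as a continuity criterion, and the proof consists of quantifying over all convergent sequences.
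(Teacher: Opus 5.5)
Your proposal is correct and follows exactly the paper's route. The paper gives no separate proof and treats the corollary as the preceding lemma applied to an arbitrary convergent sequence $p_n\to p$, with the assumption supplying convergence of $(\nabla f(p_n))_n$ in $TM$ and the lemma identifying the limit as $\nabla f(p)$; your extra checks ($C^r$ with $r\geq 1$, and $\pi_M$-continuity placing the limit in $T_pM$) are just the ingredients of that lemma.
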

Equipped with this result, we can establish the main result of this section under the following assumptions.

\begin{assumption}
\label{assumption: global minimum}
    There exists $f_{low} \in \mathbb{R}$ such that $f(p) \geq f_{low}$ for all $p \in M$.
\end{assumption}
\begin{assumption}
\label{assumption: iteration decrease}
    At each iteration, the algorithm achieves sufficient decrease for $f$, in that there exists a constant $\rho>0$ such that, for all $k$,
    \begin{equation}
        f(p_k)-f(p_{k+1})\geq \rho \vvvert \nabla f(p_k)\vvvert^2_{p_k}
    \end{equation}
\end{assumption}
\begin{assumption}
\label{assumption: gradient convergence}
    For every sequence $(p_n)_{n\in \mathbb{N}}\subset M$ that is convergent in $M$, $\big(\nabla f(p_n)\big)_{n\in \mathbb{N}}$ converges in $TM$.
\end{assumption}
\begin{proposition}
\label{proposition: RGD gradient convergence}
    Let $f$ be a $C^1$-function satisfying \Cref{assumption: gradient convergence} and \Cref{assumption: global minimum} on a weak Riemannian $C^r$-manifold, $r\geq1$. Let $p_0,p_1,p_2,...$ be iterates satisfying \Cref{assumption: iteration decrease} with constant $\rho$. Then
    \begin{equation*}
        \lim \limits_{n\to \infty}\vvvert\nabla f(p_n)\vvvert_{p_n} = 0.
    \end{equation*}
    In particular, all accumulation points are critical points. Furthermore, for all $K\geq 1$, there exists $k \in \{0,...,K-1\}$ such that 
    \begin{equation*}
        \vvvert\nabla f(p_k)\vvvert_{p_k}\leq \sqrt{\frac{f(p_0)-f_{low}}{\rho}}\frac{1}{\sqrt{K}}.
    \end{equation*}
\end{proposition}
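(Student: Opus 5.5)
The argument follows the classical telescoping proof of \cite[Proposition 4.7]{Bou23}. The weak structure enters only when we pass from the vanishing of $\vvvert \nabla f(p_n)\vvvert_{p_n}$ to the criticality of accumulation points. There Corollary \ref{corollary: Riemannian gradient sequentially cts}, via \Cref{assumption: gradient convergence}, replaces the usual continuity of the gradient norm.

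\emph{Step 1 (telescoping).} For $K\geq 1$, sum the inequality of \Cref{assumption: iteration decrease} over $k=0,\dots,K-1$ and use \Cref{assumption: global minimum}:
\begin{equation*}
f(p_0)-f_{low}\;\geq\; f(p_0)-f(p_K)\;=\;\sum_{k=0}^{K-1}\bigl(f(p_k)-f(p_{k+1})\bigr)\;\geq\;\rho\sum_{k=0}^{K-1}\vvvert\nabla f(p_k)\vvvert_{p_k}^2 .
\end{equation*}
The minimum of the summands is at most their average. Hence some $k\in\{0,\dots,K-1\}$ satisfies $\vvvert\nabla f(p_k)\vvvert_{p_k}^2\leq (f(p_0)-f_{low})/(\rho K)$, which gives the stated rate after taking square roots. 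Letting $K\to\infty$, the series $\sum_k\vvvert\nabla f(p_k)\vvvert^2_{p_k}$ converges. Its terms therefore tend to zero, so $\lim_n\vvvert\nabla f(p_n)\vvvert_{p_n}=0$. Only real-number estimates are used in this step, so nothing about the topology of $TM$ matters.

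\emph{Step 2 (accumulation points).} Let $p$ be an accumulation point and $(p_{n_j})_j$ a subsequence converging to $p$ in $M$. By \Cref{assumption: gradient convergence}, $(\nabla f(p_{n_j}))_j$ converges in $TM$. By the preceding Lemma, its limit is $\nabla f(p)$. The metric $g\colon TM\oplus TM\to\R$ is continuous and the diagonal map $TM\to TM\oplus TM$ is continuous, so $v\mapsto \vvvert v\vvvert^2_{\pi(v)}=g(v,v)$ is continuous on $TM$. Therefore
\begin{equation*}
\vvvert\nabla f(p)\vvvert_p^2=\lim_{j\to\infty}\vvvert\nabla f(p_{n_j})\vvvert^2_{p_{n_j}}=0 .
\end{equation*}
Thus $\nabla f(p)=0$, and $p$ is a critical point by Proposition \ref{prop: critical point derivative}(2).

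\emph{Main obstacle.} The key point is Step 2. The norm $\vvvert\cdot\vvvert_{p}$ does not generate the topology of $T_pM$, and the gradient need not be continuous as a map into $TM$. So one cannot simply argue that $p_{n_j}\to p$ forces $\vvvert\nabla f(p_{n_j})\vvvert_{p_{n_j}}\to\vvvert\nabla f(p)\vvvert_p$. The fix is to first obtain convergence of the gradients in the manifold topology of $TM$ through \Cref{assumption: gradient convergence}, and only then apply the continuity of $g$ on the Whitney sum. Care is needed only to check that continuity of $g$ jointly in base point and vectors is what the definition of a weak Riemannian metric provides, and it does, since $g$ is smooth on $TM\oplus TM$.

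If one wants to avoid the Lemma, there is an alternative. Continuity of $g$ gives $g_p(w,w)=0$ for the limit $w$ of $(\nabla f(p_{n_j}))_j$, hence $w=0$. Continuity of $D f$ together with the defining identity of the gradient then yields $D_pf=0$.
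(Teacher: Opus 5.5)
Your proposal is correct and follows the paper's route. The paper's proof simply invokes the telescoping argument of \cite[4.7]{Bou23} together with sequential continuity of $\nabla f$ (from \Cref{assumption: gradient convergence} and the preceding lemma) and of the norm. You additionally supply the detail the paper leaves implicit: continuity of $v\mapsto g(v,v)$ on $TM$ through the Whitney sum justifies $\vvvert\nabla f(p)\vvvert_p^2=\lim_j\vvvert\nabla f(p_{n_j})\vvvert_{p_{n_j}}^2=0$. Like the paper, you read ``accumulation point'' as the limit of a subsequence, which is the right reading here since the assumption is phrased for sequences and $M$ need not be first countable.
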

\begin{proof}
    The proof proceeds analogously to that in \cite[4.7.]{Bou23}, relying on a telescoping sum argument together with the sequential continuity of $\nabla f$ and $\vvvert \cdot \vvvert$. Consequently, it extends directly to the weak Riemannian setting.  
\end{proof}
\begin{remark}
Assumption \Cref{assumption: global minimum} and \Cref{assumption: iteration decrease} are standard assumptions known from finite-dimensional Riemannian optimization. The proof in \cite[4.7.]{Bou23} shows that Assumption \Cref{assumption: global minimum} and \Cref{assumption: iteration decrease} are sufficient to guarantee that the norm of the Riemannian gradient along the iteration sequence converges to zero. However, in the infinite-dimensional setting we additionally require the sequential continuity of the Riemannian gradient, ensured by Assumption \Cref{assumption: gradient convergence}, in order to conclude that all accumulation points are critical points. In the next example, however, we will see that Assumption \Cref{assumption: gradient convergence} is not guaranteed apriori in the infinite-dimensional setting. 
\end{remark}
\begin{example}\label{ex:5.10}
    We consider the length function on the space $C^\infty(\SSS^1,\R^2)$ 
    \[\mathcal{L} \colon C^\infty(\SSS^1,\R^2) \rightarrow \R, \mathcal{L}(c):=\int_{\SSS^1} \lVert \dot{c}\rVert\, d\mu.\]
    The space $C^\infty(\SSS^1,\R^2)$, viewed as a locally convex space equipped with the weak Riemannian metric $g^{L^2}(h,k) = \int_{\SSS^1}\langle h,k\rangle d \mu $ (Example \ref{ex:basic}), forms a weak Riemannian manifold. An explicit formula for the gradient of $\mathcal{L}$ can be computed for curves $c\in \text{Imm}(\SSS^1,\R^2)\subset C^\infty (\SSS^1,\R^2)$ as follows:
    \begin{equation*}
    \nabla \mathcal{L}(c) = -k_cN_c \lVert\dot{c}\rVert \in C^\infty (\SSS^1,\R^2),
\end{equation*}
where $N_c (z)= (-y_z (z),x_z(z))^\top$ denotes the normal vector to the curve $c(z)=(x(z),y(z))$ and $k_c$ its signed curvature. The computation is essentially the same as in Example \ref{example: Riemannian Gradient on Imm} below, where the factor $\lVert\dot{c}\rVert$ has to be included i the gradient.
We emphasize that this expression is only well-defined for immersions, since the signed curvature $k_c$ requires a nonvanishing derivative of $c$ and is undefined for points where $\dot{c} = 0$. In particular, for curves that leave the space of Immersions, the curvature-based Riemannian gradient no longer exists in a classical sense.

We define a sequence $(c_k)_{k\in \mathbb{N}} \subset \text{Imm}(\SSS^1,\R^2)$ by
\begin{equation*}
    c_k = \frac{(-1)^k}{k}\text{id}_{\mathbb{S}^1}, \quad k\in \mathbb{N}.
\end{equation*}
Observe that for $c =r\cdot\text{id}_{\SSS^1}$ for some $r\neq 0$, the Riemannian gradient of $\mathcal{L}$ at $c \in \text{Imm}(\SSS^1,\R^2)$ is given by
\begin{equation*}
    \nabla \mathcal{L}(r\cdot \text{id}_{\SSS^1}) = - \text{sgn}(r)\text{id}_{\SSS^1}
\end{equation*}
Clearly, $c_k \rightarrow 0$ as $k \rightarrow \infty$, and thus $(c_k)_{k\in \mathbb{N}}$ converges within $C^\infty(\SSS^1,\R^2)$. Nevertheless, since $\nabla \mathcal{L}(c_n) = (-1)^{n+1}\text{id}_{\SSS^1}$, the sequence of Riemannian gradients $\big(\nabla \mathcal{L}(c_n)\big)_{n\in \mathbb{N}}$ does not converge within $TM$.
\end{example}
 
 %This example shows that, in the infinite-dimensional setting, Assumption \ref{assumption: gradient convergence} does not hold automatically and therefore has to be imposed.
 
 \begin{remark}
     Observe that Assumption \ref{assumption: iteration decrease}, which imposes a sufficient decrease condition, depends indirectly on the choice of a (normalized) local addition $\Sigma$. In this paper, we do not further address the selection of step sizes or the construction of local additions that satisfy this assumption; this is deferred to future work. 
     
     Local additions on weak Riemannian manifolds present additional challenges: For example, convergence of typically depends on geodesic completeness or the iterates forming a Cauchy-sequences in the length metric, cf. e.g. \cite[Section 4.6]{Bou23}. In general, the length metric on a weak Riemannian manifold fails to be a metric. However, due to \cite{BaMaW25} the situation is much better for certain weak Riemannian metrics modeled on Hilbert spaces. 
     
     Provided that a suitable local addition exists, one may expect an analogue of a result from the finite-dimensional setting if one makes a {\em Lipschitz-type} assumption on the regularity of the gradient, see \cite[4.4]{Bou23}.
 \end{remark}
% \begin{assumption}
% \label{assumption: subset retraction}
%     For a given subset $S \subseteq TM$, there exists a constant $L >0$ such that for all $s_p \in S$,
%     \begin{equation*}
%         f\big(R_p(s_p)\big) \leq f(p) + g_p\big(\nabla f(p), s_p\big) + \frac{L}{2}\|s_p\|^2.
%     \end{equation*}
% \end{assumption}
% \begin{proposition}
%     Let $\big(M,g\big)$ be a weak Riemannian $C^1$- manifold and let $f\colon M \to \mathbb{R}$ be a gaf. For a retraction $R$, let $f\circ R$ satisfy \ref{assumption: subset retraction} on a set $S\subseteq TM$ with constant $L$. If the pairs $(p_k,s_k)_{k\in \mathbb{N}}$ generated by the RGD with step-sizes
%     \begin{equation*}
%         \alpha_k \in[\alpha_{\min},\alpha_{\max}] \subset \big(0,\frac{2}{L}\big)
%     \end{equation*}
%     all lie in $S$, then the algorithm produces sufficient decrease, hence \ref{assumption: iteration decrease} holds with
%     \begin{equation*}
%         c = \min\bigg(\alpha_{\min} -\frac{L}{2}\alpha_{\min}^2, \alpha_{\max}-\frac{L}{2}\alpha_{\max}^2\bigg) >0.
%     \end{equation*}
%     For constant step-sizes $\alpha = \frac{1}{L}$ we have $c = \frac{1}{2L}$.
% \end{proposition}

\section{Classes of Hesse manifolds and their Optimization-relevant properties}
\label{sec: Hesse mfds}
In the preceding sections, we established first-order and second-order optimality conditions for weak Riemannian manifolds and analyzed the Riemannian gradient descent method together with its convergence properties. Although our framework is formulated for general weak Riemannian manifolds, we imposed additional structural assumptions to ensure that these optimization results hold. This led to the notion of a Hesse manifold, which is a weak Riemannian manifold endowed with extra properties that make Riemannian optimization well defined and analytically tractable. Recall from Definition \ref{definition: Hesse manifold} that a Hesse manifold is a weak Riemannian manifold which admits a metric spray.

In this chapter, we present two important classes of Hesse manifolds and investigate both their fundamental geometric features and their optimization-related properties. Our primary focus will be on the robust Riemannian manifolds. We then turn to the more classical strong Riemannian manifolds. The results derived in this chapter are theoretical and possibly not of immediate interest to optimizers. However, they are new to the best of our knowledge and provide a convenient framework. In particular, the typical Riemannian metrics from shape analysis give rise to Hesse manifolds.

\subsection{Robust Riemannian manifolds}
An important class of weak Riemannian manifolds that are suitable for optimization purposes, yet do not qualify as strong Riemannian manifolds, consists of robust Riemannian manifolds, as they possess a Levi-Civita connection by definition. We next examine their geometric structure, provide concrete examples, and characterize when a weak Riemannian manifold qualifies as robust.

Robust Riemannian manifolds were introduced by Micheli and collaborators in \cite{MMM13}. This strengthening of the notion of a weak Riemannian metric allows for example curvature calculations for Riemannian submersions.

\begin{definition}\label{defn_robust_riem_mfd}
Let $(M,g)$ be a weak Riemannian manifold. We say $g$ is a \emph{robust Riemannian metric} if 
\begin{enumerate}
\item The Hilbert space completions of the fibres $\overline{T_xM}^{g_x}$ with respect to the inner product $g_x$ form a smooth vector bundle $\overline{TM} = \bigcup_{x \in M} \overline{T_xM}^{g_x}$ over $M$ whose trivializations extend the bundle trivializations of $TM$.
\item the metric derivative of $g$ exists. 
\end{enumerate}
A weak Riemannian manifold with a robust Riemannian metric will be called a \emph{robust Riemannian manifold}.
\end{definition}
\begin{remark}
Note that condition (1) in Definition \ref{defn_robust_riem_mfd} entails that the inner products $g_x$ induced by the weak Riemannian metric are locally (in a chart) equivalent to each other and thus induce the same Hilbert space completion of the fibres $T_xM$. 
\end{remark}
Before we consider examples of robust Riemannian metrics, let us first assert that:
\begin{proposition}\label{prop:rob_is_hesse}
Every robust Riemannian manifold $(M,g)$ is a Hesse manifold.
\end{proposition}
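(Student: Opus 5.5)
We have to show that a robust Riemannian manifold admits a metric spray. The plan is to use the metric derivative guaranteed by condition (2) of Definition \ref{defn_robust_riem_mfd}. We construct the Christoffel map locally and check that the resulting local sprays glue. Recall from \cite[Chapter 4]{Sch23} the local criterion for a metric spray. In a chart $U\subseteq E$ write $g_x(v,w)$. A metric spray exists on $U$ if and only if there is a smooth map $\Gamma\colon U\times E\times E\to E$, symmetric and bilinear in the last two slots, such that
\begin{equation*}
g_x(\Gamma_x(v,v),w)=\tfrac12 d_xg(w)(v,v)-d_xg(v)(v,w)
\end{equation*}
holds for all $x\in U$ and $v,w\in E$. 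The spray is then $S(x,v)=(x,v,v,\Gamma_x(v,v))$. The only obstruction on a weak manifold is that the right-hand side is merely a continuous functional in $w$. It need not be representable by an element of $E$, and this is exactly the failure in Example \ref{example: Metric Spray counterexp}.

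\textbf{Step 1 (localize).} Fix a chart and use condition (1) to trivialize $\overline{TM}|_U\cong U\times \overline{E}$, compatibly with $TU\cong U\times E$. By the remark after Definition \ref{defn_robust_riem_mfd}, all $g_x$ extend to equivalent Hilbert inner products on $\overline{E}$ that depend smoothly on $x$.

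\textbf{Step 2 (use the metric derivative).} The existence of the metric derivative means there is a smooth $\nabla^g$ with $d_xg(u)(v,w)=g_x(\nabla^g_x(u,v),w)+g_x(v,\nabla^g_x(u,w))$, valued in $E$ rather than only in $\overline{E}$; this is the content of condition (2) in \cite{MMM13}. Via $d_xg(u)(v,w)=g_x(\nabla^g_x(u,v),w)+g_x(v,\nabla^g_x(u,w))$, the symmetrized Koszul formula makes every term $g_x(\cdot,w)$ or $g_x(w,\cdot)$ with an $E$-valued first argument. The exception is the term $\tfrac12 d_xg(w)(v,v)=g_x(\nabla^g_x(w,v),v)$, whose dependence on $w$ sits inside $\nabla^g$. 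This term must be handled by the adjoint part of the metric derivative. In \cite{MMM13} the robustness definition is precisely that the $g$-adjoint $w\mapsto (\nabla^g_x(\cdot,v))^\ast v$ exists as a smooth $E$-valued map. With this, $\Gamma_x(v,v)$ is an explicit combination of $\nabla^g_x(v,v)$ and the adjoint term, and therefore lies in $E$ and is smooth.

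\textbf{Step 3 (symmetrize and glue).} Polarize to get a symmetric bilinear $\Gamma$. Uniqueness of $\Gamma$, which follows from nondegeneracy of $g_x$, shows that the local sprays agree on chart overlaps and transform correctly. Hence they define a global metric spray, so $M$ is a Hesse manifold in the sense of Definition \ref{definition: Hesse manifold}.

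I expect the main obstacle to be Step 2. One must pin down exactly what "the metric derivative exists" provides and verify that it yields $E$-valued rather than merely $\overline{E}$-valued Christoffel symbols. If the definition only gives them in the completion, I would first try to show that the Koszul solution in $\overline{E}$ actually lies in $E$. The argument would use smoothness of the trivialization in condition (1) together with the fact that $d_xg(w)(v,v)$ is continuous in $w$ for the $E$-topology.
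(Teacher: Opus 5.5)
The gap is in Step 2, which is where the entire proof has to happen. In this paper, condition (2) of Definition~\ref{defn_robust_riem_mfd} only asserts that a metric derivative exists. That is a (torsion-free) covariant derivative $\nabla\colon\mathcal{V}(M)\times\mathcal{V}(M)\to\mathcal{V}(M)$ satisfying \eqref{eq:metric_deriv}. It does not give you a smooth local $E$-valued tensor $\nabla^g_x$, and it contains no $E$-valued $g$-adjoint. A smooth $E$-valued solution $\Gamma$ of \eqref{eq:mspray} is, locally, exactly a metric spray, so attributing that adjoint to the definition in \cite{MMM13} assumes what is to be proved.

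Your bookkeeping of the Koszul terms is also off. Expanding $d_xg(v)(v,w)$ produces $g_x(v,\nabla^g_x(v,w))$, which has $w$ inside $\nabla^g$ just like $\tfrac12 d_xg(w)(v,v)=g_x(\nabla^g_x(w,v),v)$. What you never use is torsion-freeness. If $\nabla^g_x$ is symmetric, these two terms cancel and \eqref{eq:mspray} reduces to $\Gamma_x(v,v)=-\nabla^g_x(v,v)$, with no adjoint needed.

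Your fallback cannot work either. The twisted $\ell^2$ metric of Example~\ref{example: Metric Spray counterexp} satisfies condition (1): all $g_p$ are positive multiples of one inner product, so $\overline{T\ell^2}$ is trivial with a fixed weighted-$\ell^2$ fibre. Moreover $w\mapsto d_pg(w)(v,v)$ is continuous on $E$. Yet no metric spray exists, and there the Koszul functional is in general not even representable in $\overline{E}$. So $E$-valuedness has to be extracted from (2).

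This is the idea the paper uses. Condition (1) provides the Hilbert bundle $\overline{TM}$ with metric $\overline{g}$. In a chart, the paper solves \eqref{eq:mspray} by Riesz in the fibre $H$ and polarizes to $B_U$. It then defines, via \eqref{form:connection}, a connection $\overline{\nabla}_U$ on $\overline{TU}$ that is torsion-free, $\overline{g}$-metric and manifestly first order.

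Condition (2) enters through uniqueness. On sections of $TU$, $\overline{\nabla}_U$ and the metric derivative $\nabla_U$ satisfy the same Koszul identity, hence agree. Because $\nabla_U$ sends vector fields to vector fields, $B_U(x,\xi(x),\sigma(x))=d\sigma(x;\xi(x))-\nabla_U(\xi,\sigma)(x)$ lies in $E$. Thus $(x,\xi)\mapsto(x,\xi,\xi,\Gamma_U(x,\xi))$ factors through $T(TU)$ and is a spray inducing $\nabla_U$, hence a metric spray. The gluing is then as in your Step 3.

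You can repair your Step 2 in the same spirit without any adjoint. Insert chart-constant fields $v,w$ into the Koszul formula for $\nabla$; all brackets vanish, and you get $g_x\bigl((\nabla_vv)(x),w\bigr)=d_1g_U(x,v,w;v)-\tfrac12 d_1g_U(x,v,v;w)$, i.e.\ $\Gamma_U(x,v)=-(\nabla_vv)(x)\in E$. This computation also supplies the representability needed in the paper's Riesz step, which, by the twisted $\ell^2$ example, does not follow from (1) alone.

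What is then left is to justify that $\nabla$ localizes to chart domains and that $\Gamma_U$ depends smoothly on $(x,v)$. That is the first-order issue of Remark~\ref{rem:fo-connection}, and it is what the comparison with the explicit $\overline{\nabla}_U$ is meant to handle.
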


\begin{proof}
By property (1) of a robust Riemannian manifold, $\overline{TM}\rightarrow M$ is a Hilbert bundle over $M$ with typical fibre $H$. Further the Riemannian metric $g$ induces a Riemannian bundle metric $\overline{g}$ on $\overline{TM}$ (the distinction here is that $\overline{TM}$ is not the tangent bundle of $M$). We work locally on a chart domain $U$ (but suppress the chart in the notation and also the identification $\overline{TU} \subseteq \overline{TM}$). For every point $x \in U$, $\overline{g}_U(x,\cdot)$ induces the musical isomorphisms between the Hilbert space $H$ and its dual. Hence, the formula \eqref{eq:mspray} yields a well defined quadratic form $\Gamma_U (x,\cdot) \colon H \rightarrow H$ which smoothly depends on $x\in U$. Using the polarization identity $B_U (x,v,w):=\frac{1}{2}\left(\Gamma_U(x,v+w)-\Gamma_U(x,v)-\Gamma_U(x,w)\right)$ we obtain a bilinear. Now as in \eqref{cod:loc_spray} we obtain a (linear) connection (see \cite[VII.3]{GaHaV73} or \cite[1.5]{Kli95}, neither of \cite{Lang01,Sch23} define connections on vector bundles) on $\overline{TM}$ \begin{align}\label{form:connection}\overline{\nabla}_U \colon \Gamma(TU) \times \Gamma(\overline{TU}) \rightarrow \Gamma(\overline{TU}), \quad \overline{\nabla}_U (\xi,\sigma)(x):=d\sigma (x;\xi(x))-B_U(x,\xi(x),\sigma(x)),\end{align}
i.e. $\overline{\nabla}_U$ is tensorial in $\xi$ and a derivation in $\sigma$.  
As in the proof of \cite[VIII \S 4, Theorem 4.2]{Lang01} a direct calculation shows that $\overline{\nabla}_U$ is a metric connection (cf. \cite[Definition 4.2.1]{Jost17}) in the sense that it satisfies the product rule 
\begin{align}\label{prod_rule_met_con}\xi. \overline{g}_U (\sigma, \tau) = \overline{g}_U (\overline{\nabla}_U (\xi,\sigma),\tau)+\overline{g}_U (\sigma,\overline{\nabla}_U (\xi,\tau)),\qquad \xi \in \Gamma(TU), \sigma , \tau \in \Gamma(\overline{TU})\end{align}
By property (2) of a robust Riemannian manifold, the metric derivative $\nabla$ for $g$ exists on $TM$, i.e $\nabla$. The covariant derivative $\nabla$ will be a metric derivative if on every chart domain $U$ the product rule \eqref{eq:metric_deriv} holds (for $g_U$ and $\nabla_U$). As $TU \rightarrow \overline{TU}$ pulls back the Riemannian bundle metric $\overline{g}_U$ to $g_U$, the pullback of the metric connection $\overline{\nabla}_U$ becomes the (representative of the) metric derivative $\nabla_U$ (see \cite[Proposition 5.6 (a) and Exercise 5.4]{Lee97}). In particular, $\nabla_U$ is given by the formula \eqref{form:connection}. However, rearranging \eqref{form:connection} with $\Gamma_U(x,v)=B_U(x,v,v)$ for $\xi,\sigma \in \Gamma(TU)$ implies that
\[
\overline{S}_U \colon TU \rightarrow T\overline{TU},\quad \overline{S}(x,\xi):=(x,\xi,\xi,\Gamma_U (x,\xi)) 
\] 
factors through a spray $S_U \colon TU \rightarrow T(TU)$ ($\subseteq T\overline{TU}$ via the tangent of $TU\rightarrow \overline{TU}$). We conclude that $\nabla_U$ is induced by $S_U$.  Thus (cf.\ \cite[VIII \S 4 Theorem 4.2]{Lang01}) $S_U$ is a metric spray for $g_U$. 
The $S_U$ are compatible under change of trivialization as in  \cite[VIII \S 4 Theorem 4.2]{Lang01}, whence they induce a metric spray of $g$.
\end{proof}

\begin{remark}
The proof of Proposition \ref{prop:rob_is_hesse} shows that one can construct Christoffel-symbol-like objects on the completion which restrict to the metric spray. A subtle point is nevertheless the interplay between spray and metric derivative. As $M$ is not even a Banach manifold, the connection \eqref{form:connection} needs to avoid a definition via (sections of) the cotangent bundle. Fortunately, the calculations in \cite{Lang01} we needed to appeal to do not need duality or cotangent bundle arguments.
\end{remark}
In \cite[p.9]{MMM13}, the authors point out (but do not give details) that the space $\mathrm{Emb}(M,N)$ of smooth embeddings with the Sobolev $H^s$-metric (for $s$ above the critical Sobolev exponent) is a robust Riemannian manifold. Further, the following was proved in \cite[Theorem 5.1]{Mic15} and yields another main class examples:
\begin{example}
Let $G$ be a possibly infinite-dimensional Lie group. Recall from \cite[Chapter 3]{Sch23} that an infinite-dimensional Lie group is called regular (in the sense of Milnor) if the so called Lie-type differential equations can be solved on $G$ (every Banach Lie group is regular). If $g$ is a right-invariant weak Riemannian metric on the regular Lie
group $G$ which admits a metric derivative, then $(G,g)$ is already a robust Riemannian manifold.
\end{example}
The following Lemma yields another class of examples which is elementary and at the same time of interest in applications. To our knowledge, the following result has not appeared with a detailed exposition in the literature before:

\begin{proposition}\label{prop:L2_robust}
Let $(H,\langle \cdot , \cdot \rangle)$ be a Hilbert space and $\Omega \subseteq H$ open. For every compact manifold $K$, the $L^2$-metric (cf. Example \ref{ex:basic}) is a robust Riemannian metric on $C^\infty (K, \Omega)$.
\end{proposition}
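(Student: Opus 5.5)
We check the two conditions of Definition \ref{defn_robust_riem_mfd} directly, using that $C^\infty(K,\Omega)$ is an open subset of the Fr\'echet space $E:=C^\infty(K,H)$. Hence the tangent bundle is trivial, $TC^\infty(K,\Omega)\cong C^\infty(K,\Omega)\times E$, with one global chart. The key observation is that the $L^2$-metric
\[
g^{L^2}_f(v,w)=\int_K\langle v(x),w(x)\rangle\,d\mu(x)
\]
does not depend on the foot point $f$ at all. It is a constant weak Riemannian metric on an open subset of a locally convex space, exactly as the Euclidean inner product on an open subset of $\R^n$.

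\emph{Condition (1).} Every fibre $T_fC^\infty(K,\Omega)\cong E$ carries the same inner product $\langle\cdot,\cdot\rangle_{L^2}$. Its Hilbert completion is therefore the same space for all $f$, namely $\overline{E}=L^2(K,\mu;H)$. For this identification I will use that $C^\infty(K,H)$ is dense in $L^2(K,\mu;H)$, which follows from a partition of unity on $K$ together with density of smooth functions in local $L^2$ and of finite-rank $H$-valued simple functions. We then set $\overline{TM}:=C^\infty(K,\Omega)\times L^2(K,\mu;H)$, a trivial, hence smooth, Hilbert bundle. The global trivialization $\mathrm{id}\times\iota$, where $\iota\colon E\hookrightarrow L^2$ is the continuous inclusion, extends the trivialization of $TM$. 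The fibre of $\overline{TM}$ over $f$ is by construction the completion $\overline{T_fM}^{g_f}$. Continuity of $\iota$ holds because $\|v\|_{L^2}\le \mu(K)^{1/2}\sup_K\|v\|$, and the sup-norm is a continuous seminorm on $E$. This is what makes the inclusion $TM\to\overline{TM}$ a smooth bundle map.

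\emph{Condition (2).} Since $g$ is constant in the chart, the metric derivative should be the flat connection $\nabla_\xi\eta(f):=d\eta(f;\xi(f))$, with vanishing Christoffel term. I will verify the defining product rule \eqref{eq:metric_deriv}: differentiating $f\mapsto\langle \eta(f),\zeta(f)\rangle_{L^2}$ in direction $\xi(f)$ gives $\langle d\eta(f;\xi),\zeta\rangle+\langle\eta,d\zeta(f;\xi)\rangle$ by bilinearity and continuity of $g$ on $E\times E$ and the chain rule of Bastiani calculus. Torsion-freeness follows from the symmetry of second derivatives, since $\nabla_\xi\eta-\nabla_\eta\xi=d\eta\cdot\xi-d\xi\cdot\eta=[\xi,\eta]$ in the chart. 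Equivalently, the spray $S(f,v)=(f,v,v,0)$ satisfies the local metric spray equation \eqref{eq:mspray} with $\Gamma\equiv0$, because all derivatives of $g$ in $f$ vanish. Thus the metric derivative exists.

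The main obstacle is the technical part of (1): ensuring that the completion is a smooth vector bundle whose trivializations extend those of $TM$, which requires the density argument and continuity of $\iota$. Once it is noted that the metric is independent of the base point, (2) is essentially immediate.
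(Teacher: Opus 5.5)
Your proposal is correct and takes essentially the same route as the paper. Like the paper, you use the global trivialization $TC^\infty(K,\Omega)\cong C^\infty(K,\Omega)\times C^\infty(K,H)$, identify the fibrewise completion with the trivial Hilbert bundle $C^\infty(K,\Omega)\times L^2(K,H)$, and note that the metric derivative exists; the paper cites \cite[Proposition 5.8]{Sch23} for the metric derivative and \cite{Pal68} for the completion, whereas you verify both directly, using that the $L^2$-metric does not depend on the foot point so the flat connection (vanishing spray term) is the metric derivative, which is a more self-contained argument for this special case.
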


\begin{proof}
Note that we endow $\Omega$ with the Riemannian metric induced by the inclusion $\Omega \subseteq H$ and that the function space $K_\Omega := C^\infty (K,\Omega)$ is an open subset of the Frechet space $C^\infty (K,H)$, whence an infinite-dimensional manifold. Moreover using standard identifications \cite[Chapter 2]{Sch23}, the tangent bundle is trivial with
$TK_\Omega \cong C^\infty (K, T \Omega) \cong K_\Omega \times C^\infty (K,H)$.

Now due to \cite[Proposition 5.8]{Sch23} the metric derivative of the $L^2$-metric exists. 
The Hilbert space completion of $C^\infty (K,H)$ is the space $L^2 (K,H)$ of all (equivalence classes of) $L^2$-functions from $K$ to $H$ (cf. e.g. \cite{Pal68}). Since the bundle $TK_\Omega$ is trivial, the (fibre-wise) completion $\overline{TK_\Omega}^{L_2} \cong K_\Omega \times L^2 (K,H)$ is a bundle over $K_\Omega$ which extends $TK_\Omega$.
\end{proof}

\begin{remark}\label{rem:elastic}
An important special case of Proposition \ref{prop:L2_robust} is the case where $K=\SSS^1$ and $\Omega =\R^2\setminus \{0\} \subseteq \R^2$. Then the robust Riemannian manifold $C^\infty (\SSS^1, \R^2 \setminus \{0\})$ with the $L^2$-metric is isometrically isomorphic to the manifold \[\mathrm{Imm}_0(\SSS^1,\R^2):= \{f \colon \SSS^1 \rightarrow \R^2 \text{ is an immersion}\colon f(e^{\mathrm{i}0})=0\}\]
with a so-called elastic metric. The isometry is the so-called square-root-velocity-transform (SRVT), cf. \cite{BaCaKaKaNaP24}, and we remark that the elastic metric is invariant under the canonical action of $\mathrm{Diff}(\SSS^1)$. For this reason, the elastic metric is used in shape analysis, see e.g. \cite[Chapter 5]{Sch23} for an overview.
We note that Proposition \ref{prop:L2_robust} immediately implies that the elastic metric is a robust Riemannian metric. 

As discussed in \cite{BaCaKaKaNaP24}, the square-root-velocity-transform is just a special case of a more general family of transformations turning elastic metrics for other choices of the elastic parameters into (variants of) the $L^2$-metric. A similar analysis as in Proposition \ref{prop:L2_robust} should show that these metrics are also robust, but we will not explore this in the current paper. 
\end{remark}

Recall that due to the Nash-embedding theorem, every finite dimensional smooth Riemannian manifold $(M,g)$ admits an isometric embedding $\theta \colon (M,g) \rightarrow (\R^N,\langle \cdot , \cdot \rangle)$ for some $N$. As the pushforward $\theta_\ast \colon C^\infty (K,M)\rightarrow C^\infty (K,\R^N), \theta_\ast (f)=\theta\circ f$ is smooth by \cite[Corollary 2.19]{Sch23}, together with the identification $TC^\infty (K,M)\cong C^\infty (K,TM)$ the map $\theta_\ast$ induces a Riemannian embedding into $C^\infty (K,\R^N)$. Thus  the following is now an immediate consequence of Proposition \ref{prop:L2_robust}:

\begin{corollary}
    For every finite dimensional Riemannian manifold $M$ and every compact manifold $K$, the $L^2$-metric turns $C^\infty (K,M)$ into a robust Riemannian manifold.
\end{corollary}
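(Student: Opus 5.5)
The plan is to transport the robust structure from $C^\infty(K,\R^N)$ to $C^\infty(K,M)$ via the isometric embedding $\theta_\ast$, but not by restricting the Levi-Civita data of the ambient space. For submanifolds the metric derivative is obtained by orthogonal projection of the ambient connection, and it is easier to work pointwise on $M$. So I would fix a Nash embedding $\theta\colon M\to\R^N$ and identify $M$ with a closed (after possibly using a proper embedding) submanifold of $\R^N$. Then $C^\infty(K,M)\subseteq C^\infty(K,\R^N)$ is a split submanifold, with $T_fC^\infty(K,M)\cong\{h\in C^\infty(K,\R^N)\colon h(x)\in T_{f(x)}M\}$, and the $L^2$-metric on $C^\infty(K,M)$ is the restriction of $g^{L^2}$ on $C^\infty(K,\R^N)$. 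By Proposition \ref{prop:L2_robust} applied with $H=\R^N$ and $\Omega=\R^N$, the ambient metric is robust, but we must check the two conditions of Definition \ref{defn_robust_riem_mfd} directly for the submanifold.

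\textbf{Step 1 (completion bundle).} For $f\in C^\infty(K,M)$ the completion of $T_fC^\infty(K,M)$ in $g^{L^2}_f$ is the closed subspace
\[
L^2(K,f^\ast TM)=\{h\in L^2(K,\R^N)\colon h(x)\in T_{f(x)}M \text{ a.e.}\}.
\]
Let $P\colon M\to \mathrm{End}(\R^N)$ be the smooth field of orthogonal projections onto $T_pM$. Then $P_\ast(f)=P\circ f$ acts as a projection on $L^2(K,\R^N)$ depending smoothly on $f$, since pushforwards are smooth by \cite[Corollary 2.19]{Sch23}. So $\overline{TC^\infty(K,M)}$ is the image bundle of the smooth family of projections $P_\ast$ inside the trivial bundle $C^\infty(K,M)\times L^2(K,\R^N)$, hence a smooth Hilbert subbundle. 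Its local trivializations, e.g.\ $P_\ast(f)|_{\mathrm{im}P_\ast(f_0)}$ near $f_0$, restrict to the smooth trivializations of $TC^\infty(K,M)\cong C^\infty(K,TM)$.

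\textbf{Step 2 (metric derivative).} I would either cite \cite[Proposition 5.8]{Sch23}, which gives the metric derivative of the $L^2$-metric on $C^\infty(K,M)$ for finite-dimensional $M$ directly, or construct it as $\nabla_X Y(f)=P_\ast(f)\bigl(dY(f;X(f))\bigr)$. This is pointwise $x\mapsto \nabla^M_{X(f)(x)}Y(\cdot)(x)$ with the Levi-Civita connection of $M$. Metric compatibility and torsion-freeness follow pointwise from those of $\nabla^M$ after integrating over $K$.

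The main obstacle I expect is Step 1. Smoothness of $f\mapsto P\circ f$ as a map into bounded operators on $L^2$ (not just into $C^\infty(K,\mathrm{End}\,\R^N)$) must be deduced from continuity of the multiplication $C^0(K,\mathrm{End}\,\R^N)\times L^2\to L^2$. Non-closedness of $\theta(M)$ is harmless, since only the local projection field is used.
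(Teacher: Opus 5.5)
Your argument is correct. It shares the paper's starting point: a Nash embedding $\theta\colon M\to\R^N$, the smooth pushforward $\theta_\ast$, and the ambient completion $L^2(K,\R^N)$ from Proposition \ref{prop:L2_robust}. It differs at the decisive step.

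The paper only observes that $\theta_\ast$ is a Riemannian embedding into the robust manifold $C^\infty(K,\R^N)$ and then calls the corollary an immediate consequence of Proposition \ref{prop:L2_robust}. It never explains why robustness passes to the image. For weak metrics this is not a formal consequence, for two reasons:
\begin{enumerate}
\item One must show that the closures of the subspaces $T_fC^\infty(K,M)$ form a smooth subbundle of the ambient completed bundle.
\item Obtaining a metric derivative by projecting the ambient one requires an orthogonal projection that maps smooth vectors to smooth vectors, and a weak metric does not supply this in general.
\end{enumerate}

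You avoid both issues by using finite-dimensionality of $M$ essentially, through the smooth field $P$ of orthogonal projections onto $T_pM$. The map $f\mapsto P\circ f$ is smooth into $L(L^2(K,\R^N))$, because $C^\infty(K,\mathrm{End}\,\R^N)\hookrightarrow C^0(K,\mathrm{End}\,\R^N)\to L(L^2)$ is continuous linear. This gives you the completed bundle as the image of a smooth family of projections. The same field gives the metric derivative as $P_\ast(dY(X))$. So your route proves the inheritance the paper takes for granted.

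The cost is a little routine bundle theory. You need the local frames $w\mapsto P(f)w$ from $\operatorname{im}P(f_0)$ onto $\operatorname{im}P(f)$ and their restriction to smooth sections. There, pointwise invertibility of $P(f(x))|_{T_{f_0(x)}M}\colon T_{f_0(x)}M\to T_{f(x)}M$, for $f$ uniformly close to $f_0$, is what makes them bijective on $\Gamma(f^\ast TM)$.

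Finally, as in the paper's proof of Proposition \ref{prop:L2_robust}, condition (2) can be taken directly from \cite[5.8]{Sch23} without any embedding. The embedding is therefore genuinely needed only for condition (1).
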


In general we lack a global isometric embedding for infinite-dimensional strong Riemannian manifolds (albeit many infinite dimensional manifolds embedd as open subsets of Hilbert spaces, cf. \cite{Hen70}). One could argue using localization arguments in charts to obtain a similar result for mapping spaces into strong Riemannian manifolds. We shall not give a detailed account of this. A first step towards this is the following Lemma, which is of interest in its own right.

\begin{lemma}\label{lem:L2_openHilb_strong}
Let $\Omega \subseteq H$ be an open subset of the Hilbert space $(H,\langle \cdot ,\cdot \rangle)$ endowed with a strong Riemannian metric $g$. For a compact manifold $K$, Write $K_\Omega := C^\infty(K,\Omega)$ for the manifold endowed with $G$, the $L^2$-metric with respect to $g$. 
\begin{enumerate}
\item 
There is a bundle trivialization 
$\Theta \colon TK_\Omega \rightarrow K_\Omega \times C^\infty (K,H)$ which takes the $G$-inner product fibre-wise to the $L^2$-metric with respect to $\langle \cdot , \cdot\rangle$.
\item $C^\infty (K,\Omega), L^2_g)$ is a robust Riemannian manifold.
\end{enumerate}
\end{lemma}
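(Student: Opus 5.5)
For (1), the plan is to trivialize pointwise by a square root of the strong metric. Since $g$ is strong on the open set $\Omega$ of the Hilbert space $H$, for each $y\in\Omega$ there is a bounded, positive, invertible self-adjoint operator $A(y)\in L(H)$ with $g_y(u,v)=\langle A(y)u,v\rangle$. This follows from the Riesz representation theorem, and invertibility comes from strongness: the $g_y$-norm is equivalent to $\|\cdot\|$. Smoothness of $g$ gives smoothness of $y\mapsto A(y)$ into $L(H)$. Next I set $P(y):=A(y)^{1/2}$. The square root is real analytic on the open cone of positive invertible operators, which one sees by the holomorphic functional calculus with a contour in the right half plane. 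Hence $P\colon \Omega\to L(H)$ is smooth, and each $P(y)$ is invertible with $\langle P(y)u,P(y)v\rangle=g_y(u,v)$.

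I then define $\Theta(f,h):=(f,\,P\circ(f,h))$, where $(P\circ(f,h))(x)=P(f(x))h(x)$, using $TK_\Omega\cong K_\Omega\times C^\infty(K,H)$. Smoothness of $\Theta$ and of its inverse $(f,k)\mapsto (f,P(f)^{-1}k)$ follows from the exponential law / pushforward results for $C^\infty(K,\cdot)$ in \cite[Chapter 2]{Sch23}. The relevant map $\Omega\times H\to H$, $(y,u)\mapsto P(y)u$, is smooth, and inversion is smooth on the invertible operators. Both maps are fibrewise linear, so $\Theta$ is a bundle isomorphism. The isometry property is immediate:
\[
G_f(h,k)=\int_K g_{f(x)}(h(x),k(x))\,d\mu=\int_K\langle P(f(x))h(x),P(f(x))k(x)\rangle\,d\mu .
\]

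For (2), I use $\Theta$ to transport the structure. By (1), $(K_\Omega,G)$ is bundle-isometric to the trivial bundle $K_\Omega\times C^\infty(K,H)$ with the flat $L^2$-metric. For condition (1) of Definition \ref{defn_robust_riem_mfd}, I take the completion to be $K_\Omega\times L^2(K,H)$, as in the proof of Proposition \ref{prop:L2_robust}. The trivialization of $\overline{TK_\Omega}$ is $\overline\Theta^{-1}$, where $\overline\Theta(f,\cdot)$ is multiplication by $P\circ f$ extended to $L^2(K,H)$. Since $P\circ f$ is continuous on compact $K$, it is bounded with bounded inverse, so this extension is an isomorphism of Hilbert spaces. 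The extended map depends smoothly on $f$, because multiplication by a $C^\infty(K,L(H))$-function is a continuous bilinear action on $L^2(K,H)$. For condition (2), the metric derivative exists by \cite[Proposition 5.8]{Sch23}, which treats $L^2$-metrics induced by a strong metric with a metric spray. Alternatively, I would write the candidate Christoffel map pointwise, $\Gamma_f(h)(x)=\Gamma^\Omega_{f(x)}(h(x))$, using the Levi-Civita spray of the strong metric $g$, which exists for strong metrics. I would then check the product rule \eqref{eq:metric_deriv} by differentiating under the integral.

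The main obstacle is smoothness. First, I must verify that $y\mapsto A(y)^{1/2}$ is smooth in operator norm, which requires the functional-calculus argument above. Second, I must verify that the extended trivialization on $K_\Omega\times L^2(K,H)$ is smooth as a map into a Banach space from the Fréchet manifold. The second point I would reduce to continuity of the bilinear map $C^\infty(K,L(H))\times L^2(K,H)\to L^2(K,H)$ together with smoothness of $f\mapsto P\circ f$.
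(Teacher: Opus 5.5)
Your proposal is correct and follows essentially the same route as the paper, which imports the smooth family of positive invertible square roots $B_p$ with $g_p(u,v)=\langle B_pu,B_pv\rangle$ from \cite[VII, Theorem 3.1]{Lang01} instead of building it via the functional calculus. The rest matches: you define $\Theta$ by pointwise application with smoothness from the exponential law, extend fibrewise to the completions, and cite \cite[5.8]{Sch23} for the metric derivative. Your additional checks (smoothness of $\Theta^{-1}$, and smoothness of the extended multiplication map on $K_\Omega\times L^2(K,H)$) are valid refinements that the paper avoids by simply transporting the bundle structure along $\overline{\Theta}$.
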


\begin{proof}
Identify $TC^\infty (K,\Omega)\cong C^\infty (K,T\Omega)\cong K_\Omega \times C^\infty (K,H)$.

(1) Recall from \cite[VII, Theorem 3.1]{Lang01} that since $g$ is a strong Riemannian metric there is a smooth map
$B \colon \Omega \times H \rightarrow H, B_p := B(p,\cdot)$ such that for every $p \in \Omega, B_p$ is a positive definite invertible operator with $g_p(u,v)=\langle B_pu,B_pv\rangle, u,v \in H$. We define
\[\theta \colon K_\Omega \times C^\infty (K,H)\rightarrow C^\infty (K,H), (f,\varphi) \mapsto B\circ (f,\varphi)\]
By construction $\theta_f :=\theta (f,\cdot)$ is bijective, linear and fibre-wise an isometry as\textit{} 
\begin{align*}\int_{\SSS^1} \langle \theta_f (\varphi), \theta_f (\psi)\rangle \, \mathrm{d}\mu &=\int_{\SSS^1} \langle B_{f(p)}  (\varphi(p)), B_{f(p)}(\psi(p))\rangle \, \mathrm{d}\mu(x)=\int_{\SSS^1} g_{f(p)} (\varphi(p), \psi(p)) \, \mathrm{d}\mu(p)\\ &=G_f (\varphi,\psi).\end{align*}
If $\theta$ is smooth, then $\Theta =(\mathrm{id}_{K_\Omega},\theta)$ satisfies the conditions in (1). To see that $\theta$ is smooth, recall that by the exponential law \cite[Theorem 2.12]{Sch23}, $\theta$ is smooth if and only if the adjoint map $\theta^\wedge \colon K_\Omega \times C^\infty (K,H) \times K \rightarrow H$ is smooth, but this map can be written as
$\theta^\wedge (f,\varphi,k)= \ev(B(\ev(f,k),\ev(\varphi(k)))$ and since $B$ is smooth and the evaluation maps of the spaces $K_\Omega$ and $C^\infty (K,H)$ are smooth, \cite[Lemma 2.16 (a)]{Sch23}, we deduce that $\theta$ is smooth.

(2) By part (1), $\Theta$ is a bundle isomorphism over the identity onto a trivial bundle. By Proposition \ref{prop:L2_robust}, $K_\Omega$ with the $L^2$-metric is a robust Riemannian manifold.
We note that as $\Theta$ induces fiber-wise an isometry, it extends in every fiber to an isometry of the Hilbert space completions (see \cite[Lemma 4.16]{Rud87}). Hence taking fibre-wise the continuous linear extensions to the completions of the fibre-maps of $\Theta$ we obtain a fibre-wise isometry
\[\overline{\Theta} \colon \sqcup_{f \in K_\Omega} \overline{T_fK_\Omega}^{g_f} \rightarrow K_\Omega \times L^2 (K,H).\]
Thus there is a unique vector bundle structure on the union of the completed spaces, making $\overline{\Theta}$ a bundle isomorphism and by construction this bundle extends $TK_\Omega$. 
The metric derivative exists again in this setting by \cite[Theorem 5.8]{Sch23}.
We conclude that $L^2_g$-is a robust Riemannian metric. 
\end{proof}

In general, the construction in part (2) of Lemma \ref{lem:L2_openHilb_strong} already hints at permanence properties of various objects connected to Riemannian metrics which are hardly surprising. However, we state them here and supply the necessary details for the proofs for the readers convenience. In particular, while it is somewhat obvious that these constructions should work, the added details should convince the reader that the constructions do not depend on the manifolds being finite-dimensional or strong manifolds.

\begin{proposition}
Let $(M,g), (N,\tilde{g})$ be weak Riemannian manifolds together with a Riemannian isometry $F\colon M \rightarrow N$ (i.e. a diffeomorphism such that $F^\ast \tilde{g}=g$). Then
$(M,g)$ is a robust Riemannian manifold if and only if $(N,\tilde{g})$ is a robust Riemannian manifold.  
\end{proposition}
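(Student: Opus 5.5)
By symmetry it suffices to show one direction: if $(N,\tilde g)$ is robust, then so is $(M,g)$. The converse follows by applying the same argument to the isometry $F^{-1}$, since $(F^{-1})^\ast g=\tilde g$ because $F$ is a diffeomorphism. I will verify the two conditions of Definition \ref{defn_robust_riem_mfd} separately by pulling back the structures on $N$ along $F$.

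\emph{Condition (1), completed bundle.} For each $x\in M$ the tangent map $D_xF\colon (T_xM,g_x)\to (T_{F(x)}N,\tilde g_{F(x)})$ is a linear bijection, and it is an isometry because $F^\ast\tilde g=g$. By \cite[Lemma 4.16]{Rud87} (as in the proof of Lemma \ref{lem:L2_openHilb_strong}) it extends uniquely to an isometric isomorphism
\[\overline{D_xF}\colon \overline{T_xM}^{g_x}\to \overline{T_{F(x)}N}^{\tilde g_{F(x)}}.\]
This yields a fibrewise bijection $\overline{TF}\colon \overline{TM}\to F^\ast\overline{TN}$ over $\mathrm{id}_M$. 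I will transport the vector bundle structure of the pullback bundle $F^\ast\overline{TN}$ along $\overline{TF}$.

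Concretely, take a bundle trivialization $\psi$ of $\overline{TN}$ over $V\subseteq N$ that extends a trivialization of $TN|_V$. Its pullback composed with $\overline{TF}$ is a trivialization of $\overline{TM}$ over $F^{-1}(V)$. On $TM$ this trivialization restricts to $\psi\circ TF$, which is a bundle trivialization of $TM|_{F^{-1}(V)}$ because $TF$ is a smooth bundle isomorphism. Hence the new trivializations extend trivializations of $TM$.

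Transition maps between two such trivializations are pullbacks by $F$ of transition maps of $\overline{TN}$, so they are smooth. The one point to check is that an extension of a trivialization of $TM$ coming from an arbitrary chart is compatible as well. Its transition against the constructed trivializations is the continuous extension of a smooth $TM$-transition, and I will use the Remark after Definition \ref{defn_robust_riem_mfd} (local equivalence of the inner products) to see that these extensions are well defined and smooth. I expect this compatibility bookkeeping to be the main technical obstacle, mostly notational.

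\emph{Condition (2), metric derivative.} Let $\tilde\nabla$ be the metric derivative of $\tilde g$. I define
\[\nabla_XY:=F^\ast\big(\tilde\nabla_{F_\ast X}F_\ast Y\big),\qquad X,Y\in\mathcal V(M),\]
where $F_\ast X=TF\circ X\circ F^{-1}$. Since $F$ is a diffeomorphism, pushforward and pullback of vector fields are well defined and smooth. The connection axioms ($C^\infty(M)$-linearity in $X$, the Leibniz rule in $Y$, torsion-freeness via $F_\ast[X,Y]=[F_\ast X,F_\ast Y]$) follow by naturality.

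For the product rule, I compute
\[X.g(Y,Z)=\big((F_\ast X).\tilde g(F_\ast Y,F_\ast Z)\big)\circ F,\]
using $F^\ast\tilde g=g$, and then apply the product rule for $\tilde\nabla$. Alternatively, in the local spray formulation of Appendix \ref{App:diffgeo}, I transport the spray by $S=T(TF)^{-1}\circ\tilde S\circ TF$ and check the defining identity \eqref{eq:metric_deriv} chartwise. In particular $(M,g)$ is robust, which completes the argument.
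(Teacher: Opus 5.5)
Your proposal is correct and follows the paper's proof. Both reduce to one direction by symmetry, extend $D_xF$ fibrewise to isometries of the Hilbert completions via \cite[Lemma 4.16]{Rud87}, and pull back the metric derivative as $F^\ast(\tilde\nabla_{F_\ast X}F_\ast Y)$, which is exactly the paper's $(F^{-1})_\ast(\tilde\nabla_{\tilde Y}\tilde Z)$. Your explicit transport of the bundle atlas is more detailed than the paper's, and it already suffices for condition (1) as the paper uses it. If you also want the extra compatibility with arbitrary trivializations of $TM$ that you flag, get it by transporting those trivializations to $N$ via $TF$; local equivalence of the inner products alone does not give smoothness of the extended transition maps.
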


\begin{proof}
Since $F$ is a Riemannian isometry, the same holds for $F^{-1}$. Clearly the situation is symmetric, so it suffices to assume that $(N,\tilde{g})$ is a robust Riemannian manifold and we shall prove that $(M,g)$ is robust.

For the completion of the bundle $TM$ we just note that the isometries $TF \colon TM\rightarrow TN$ and $TF^{-1}\colon TN\rightarrow TM$ extend fiber-wise to isometries of the Hilbert completions with respect to the inner products induced by the Riemannian metrics (see \cite[Lemma 4.16]{Rud87}).

As $F$ is a diffeomorphism, every vector field $X$ on $M$ is $f$-related to the  push-forward $\tilde{X} =F_* X := TF \circ X \circ F^{-1}$ on $N$. Now $(N,\tilde{g})$ admits a metric derivative $\tilde{\nabla}$ and we use it to define a mapping $\nabla \colon \mathcal{V}(M)^2 \rightarrow \mathcal{V}(M)$ via the formula
\[\nabla_Y Z=(F^{-1})_\ast(\tilde{\nabla}_{\tilde{Y}}(\tilde{Z}))=TF^{-1} (\tilde{\nabla}_{TF\circ Y\circ F^{-1}} (TF\circ Z\circ F^{-1})\circ F.\]
The usual finite dimensional proof, see \cite[Proposition 5.6 (a) and Exercise 5.4]{Lee97} shows that $\nabla$ is a connection  compatible with the metric, i.e. a metric derivative.
\end{proof}
\subsection{Strong Riemannian manifolds}
 We now turn to strong Riemannian manifolds, which are well established both in geometric theory and optimization. However, it should be pointed out that there are significant differences on the level of Riemannian geometry.
 \begin{example}\label{ex:GM_ellipsoid}
     Every Hilbert space is a strong Riemannian manifold as are embedded submanifolds like the unit sphere. Moreover, in the Hilbert space $\ell^2$ of square summable sequences, if we define $a_1=1$ and $a_n=1+2^{-n}, n \geq 2$, then the set
     $$E:=\left\{(x_n)_{n\in \mathbb{N}} \in \ell^2 \colon \sum_{n\in \mathbb{N}} \frac{x_n^2}{a_n^2} =1\right\},$$
     is a strong Riemannian manifold with the pullback metric. This is known as Grossmann's ellipsoid, and one can prove that while it is geodesically complete, there are points which do not admit a minimal geodesic path between them (in other words, the Hopf-Rinow-theorem fails on strong Riemannian manifolds), see \cite[4.43]{Sch23} for details.
 \end{example}
The underlying Hilbert space structure of a strong Riemannian manifold and the corresponding structure on the tangent bundles, enables direct transfer to many results from finite-dimensional optimization.
We briefly illustrate this in our setting.
 By \cite[4.5]{Sch23}, a strong Riemannian manifold can equivalently be described as follows:
\begin{lemma}
\label{definition:alternative strong Riemannian manifold}
    Let $(M,g)$ be a weak Riemannian manifold. If $M$ is a Hilbert manifold, i.e. modeled on Hilbert spaces and the injective linear map
    \begin{equation*}
        \flat \colon TM \to T^*M,\quad T_pM \ni v \mapsto g_p(v,\cdot)
    \end{equation*}
    is a vector bundle isomorphism, then $(M,g)$ is a strong Riemannian manifold.
\end{lemma}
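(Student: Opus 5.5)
The plan is to show that the topology induced by $g_p$ on each fibre $T_pM$ agrees with the model topology; this is exactly the defining condition for $g$ to be strong. We work fibrewise and use only that $M$ is a Hilbert manifold and that $\flat$ is a vector bundle isomorphism.

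\textbf{Step 1 (fibrewise setup).} Fix $p\in M$ and a chart around $p$. Then $T_pM\cong H$ for a Hilbert space $(H,\langle\cdot,\cdot\rangle)$ with norm $\|\cdot\|$. Since $g$ is smooth on $TM\oplus TM$, the bilinear form $g_p$ is continuous on $H\times H$. Hence there is $C>0$ with $\vvvert v\vvvert_p^2=g_p(v,v)\le C\|v\|^2$, so the $g_p$-topology is coarser than the model topology.

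\textbf{Step 2 (reverse estimate via $\flat$).} Because $\flat$ is a vector bundle isomorphism, its fibre map $\flat_p\colon H\to H^*$ is a topological linear isomorphism, where $H^*$ carries the dual norm. By the Riesz representation theorem, $\flat_p$ corresponds to a bounded, bijective, self-adjoint operator $A_p$ with $g_p(v,w)=\langle A_pv,w\rangle$. The inverse $A_p^{-1}$ is bounded, either because $\flat^{-1}$ is continuous or by the open mapping theorem. Since $g_p$ is positive definite, $A_p$ is a positive self-adjoint operator, so it has a bounded positive square root $A_p^{1/2}$, which is invertible with inverse $(A_p^{-1})^{1/2}$. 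This gives
\[
\|v\|=\|A_p^{-1/2}A_p^{1/2}v\|\le \|A_p^{-1/2}\|\,\|A_p^{1/2}v\| = \|A_p^{-1/2}\|\,\vvvert v\vvvert_p .
\]
Therefore the two norms are equivalent. In particular $(T_pM,g_p)$ is complete and induces the subspace topology of $T_pM\subseteq TM$, so $g$ is strong.

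\textbf{Main obstacle.} The only delicate point is Step 2: deducing coercivity $\vvvert v\vvvert_p\ge c\|v\|$ from the invertibility of $\flat_p$. The spectral calculus is one route. A more elementary alternative is to note that $\vvvert v\vvvert_p^2=\sup$ over $w\neq 0$ of $g_p(v,w)^2/g_p(w,w)$, by Cauchy--Schwarz for $g_p$. One can also test $\flat_p v$ against suitable $w$, but the square-root argument is the cleanest. All the remaining steps are routine.
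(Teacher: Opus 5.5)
Your proof is correct. The paper gives no argument of its own for this lemma; it only invokes \cite[4.5]{Sch23}. Your Steps 1--2 therefore supply a self-contained justification rather than reproduce the paper's.

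Your main route factors $g_p(u,v)=\langle A_p^{1/2}u,A_p^{1/2}v\rangle$ through the spectral square root. This is the same mechanism as \cite[VII, Theorem 3.1]{Lang01}, which the paper itself uses in the proof of Lemma~\ref{lem:L2_openHilb_strong}. There $B_p$ plays the role of your $A_p^{1/2}$, with the extra information that it depends smoothly on $p$.

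The Cauchy--Schwarz alternative you mention is shorter and worth writing out. With $C$ from Step 1,
\[
\|\flat_p v\|_{H^*}=\sup_{\|w\|\le 1}|g_p(v,w)|\le \vvvert v\vvvert_p\,\sup_{\|w\|\le 1}\vvvert w\vvvert_p\le \sqrt{C}\,\vvvert v\vvvert_p ,
\]
hence $\|v\|\le \|\flat_p^{-1}\|\,\|\flat_p v\|_{H^*}\le \sqrt{C}\,\|\flat_p^{-1}\|\,\vvvert v\vvvert_p$.

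What each route buys:
\begin{itemize}
\item The Cauchy--Schwarz route needs neither the Riesz representation theorem nor functional calculus. It works verbatim on a Banach manifold, where it shows that invertibility of $\flat$ already forces the model spaces to be Hilbertizable. So for this direction the Hilbert hypothesis is not really an independent input.
\item The square-root route gives more structure: the fibrewise isometry $A_p^{1/2}\colon (T_pM,g_p)\to(H,\langle\cdot,\cdot\rangle)$. Made smooth in $p$, this is exactly the trivialisation the paper exploits later.
\end{itemize}
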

The usual sources \cite{Lang01,Kli95} for Riemannian geometry in infinite-dimensional spaces deal with strong Riemannian manifolds. In particular, they show that the Levi-Civita derivative and the metric spray (cf.\ Appendix \ref{App:diffgeo}) exist for these manifolds. Summing up this shows the following.

\begin{lemma}
Every strong Riemannian manifold is a robust Riemannian manifold and thus a Hesse manifold.
In particular, every finite-dimensional manifold is a strong Riemannian manifold.\end{lemma}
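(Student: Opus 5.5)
The statement has two parts. For the first, I would show that a strong Riemannian manifold satisfies both conditions of Definition \ref{defn_robust_riem_mfd}. The second implication, robust $\Rightarrow$ Hesse, is then Proposition \ref{prop:rob_is_hesse}.

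\emph{Step 1 (completions).} Let $(M,g)$ be strong. By Lemma \ref{definition:alternative strong Riemannian manifold}, $M$ is a Hilbert manifold and $g_p$ induces the native topology on each $T_pM$. Since $T_pM$ is complete in the model topology, the $g_p$-norm is equivalent to a complete Hilbert norm. Hence $T_pM$ is already complete with respect to $g_p$, and $\overline{T_pM}^{g_p}=T_pM$. The bundle $\overline{TM}$ is therefore $TM$ itself, and its trivializations are exactly those of $TM$, so condition (1) holds trivially.

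\emph{Step 2 (metric derivative).} In a chart $U\subseteq H$, the map $\flat$ is a vector bundle isomorphism by Lemma \ref{definition:alternative strong Riemannian manifold}. So for each $x$, the continuous linear functional
\[
w\mapsto \tfrac12\big(dg(x;v)(v,w)\cdot 2 - dg(x;w)(v,v)\big)
\]
can be represented through $g_x$ by a unique vector, and this defines a quadratic map $\Gamma_U(x,\cdot)$ as in \eqref{eq:mspray}. Smoothness of $x\mapsto\Gamma_U(x,v)$ follows because inverting $\flat$ is smooth: inversion on invertible operators is smooth in the Banach setting, and the model space is Hilbert. This is exactly the construction of \cite[VIII \S 4, Theorem 4.2]{Lang01} and \cite[1.5]{Kli95}. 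It produces a metric spray, whose induced connection is the Levi-Civita (metric) derivative. I would simply cite these results rather than repeat the computation.

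\emph{Step 3 (finite dimensions).} On a finite-dimensional space every inner product induces the unique Hausdorff vector topology. Thus any Riemannian metric is strong, and the claim follows from the above.

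The main obstacle is really only Step 1: justifying that the $g_p$-topology coinciding with the native topology implies completeness. I would handle it by the norm equivalence just described.
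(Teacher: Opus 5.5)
Your proposal is correct and follows the paper's route. The paper simply cites \cite{Lang01,Kli95} for the existence of the Levi-Civita derivative and the metric spray on strong manifolds and leaves the (trivial) completion condition and the finite-dimensional case implicit, so your Steps 1 and 3 just spell out what it takes for granted.

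One slip to fix: the functional you display is the negative of the right-hand side of \eqref{eq:mspray}. With the paper's conventions, where $\nabla_u Y = dY(u)-B_U(u,Y)$, you need $w\mapsto \tfrac12\, dg(x;w)(v,v)-dg(x;v)(v,w)$; with your sign the associated connection fails to be metric.
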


The geometric structure of a strong Riemannian manifold guarantees the existence and the continuity of the Riemannian gradient through its unique representation.
\begin{lemma}
\label{lemma: Strong Riem. Grad. Seq. cts.}
    Let $(M,g)$ be a strong Riemannian $C^1$-manifold and $f\colon M \to \mathbb{R}$ be a $C^1$-function. Then the Riemannian gradient $\nabla f$ exists and is sequentially continuous. 
    \begin{proof}
        As $(M,g)$ is a strong Riemannian manifold, $\flat \colon TM \to T^*M$ is an isomorphism. Hence, the Riemannian gradient of any $C^1$-function $f \colon M \to \mathbb{R}$ is given by
        \begin{equation*}
            \nabla f(p) = \flat^{-1}(df(p;\cdot)).
        \end{equation*}
        By \cite[4.4]{Sch23}, $\flat$ is a bounded linear operator and thus continuous. This implies that for every sequence $(p_n)_{n \in \mathbb{N}} \subset M$ with $\lim \limits_{n \to \infty}p_n = p \in M$, that
        \begin{equation*}
            \lim \limits_{n \to \infty}\nabla f(p_n) = \lim\limits_{n\to \infty} \flat^{-1}(df(p_n;\cdot) = \flat^{-1}(\lim \limits_{n\to \infty}df(p_n;\cdot)) = \flat^{-1}(df(p;\cdot)) = \nabla f(p).\qedhere
        \end{equation*}
    \end{proof}
\end{lemma}
Consequently, on strong Riemannian manifolds, every $C^1$-function is a gaf, and Assumption \ref{assumption: gradient convergence} holds automatically. Thus, Proposition \ref{proposition: RGD gradient convergence} simplifies to:
\begin{corollary}
\label{corollary: RGD convergence on strong mfds}
    Let $(M,g)$ be a strong Riemannian $C^1$-manifold and $f$ a $C^1$-function on $M$ satisfying \ref{assumption: global minimum}. Let $p_0,p_1,p_2,...$ be iterates satisfying \ref{assumption: iteration decrease} with constant $\rho$. Then 
     \begin{equation*}
        \lim \limits_{n\to \infty}\|\nabla f(p_n)\| = 0.
    \end{equation*}
    In particular, all accumulation points are critical points. Furthermore, for all $K\geq 1$, there exists $k \in \{0,...,K-1\}$ such that 
    \begin{equation*}
        \|\nabla f(p_k)\|_{p_k}\leq \sqrt{\frac{f(p_0)-f_{low}}{\rho}}\frac{1}{\sqrt{K}}.
    \end{equation*}
\end{corollary}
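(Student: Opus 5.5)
The plan is to reduce the statement to Proposition \ref{proposition: RGD gradient convergence}. By Lemma \ref{lemma: Strong Riem. Grad. Seq. cts.}, on a strong Riemannian $C^1$-manifold every $C^1$-function $f$ is a gaf. Its gradient is given by $\nabla f(p)=\flat^{-1}(df(p;\cdot))$ and is sequentially continuous. So the first step is to verify Assumption \ref{assumption: gradient convergence}. Given $p_n\to p$ in $M$, sequential continuity of $\nabla f\colon M\to TM$ yields $\nabla f(p_n)\to\nabla f(p)$ in $TM$, so the sequence of gradients converges in $TM$, which is exactly the assumption. Together with the hypotheses \ref{assumption: global minimum} and \ref{assumption: iteration decrease}, all assumptions of Proposition \ref{proposition: RGD gradient convergence} are met.

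The second step is to apply that proposition, which yields $\vvvert\nabla f(p_n)\vvvert_{p_n}\to 0$, the fact that accumulation points are critical, and the $1/\sqrt{K}$ bound for the norm $\vvvert\cdot\vvvert_{p_k}$. For completeness I would briefly recall the telescoping argument. Summing the decrease inequality gives
\[
\rho\sum_{k=0}^{K-1}\vvvert\nabla f(p_k)\vvvert_{p_k}^2\le f(p_0)-f(p_K)\le f(p_0)-f_{low}.
\]
Hence the series converges, so its terms tend to zero, and the minimum over $k<K$ is at most $(f(p_0)-f_{low})/(\rho K)$; taking square roots gives the stated bound.

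The final step is to identify $\|\cdot\|_{p}$ in the corollary with $\vvvert\cdot\vvvert_p$. On a strong Riemannian manifold the norm $\sqrt{g_p(v,v)}$ is a Hilbert norm inducing the native topology of $T_pM$, so the paper's notational convention permits writing $\|\cdot\|$ for it.

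The only point needing care is the claim that accumulation points are critical. If $p_{n_j}\to p$, then sequential continuity gives $\nabla f(p_{n_j})\to\nabla f(p)$ in $TM$. We then need continuity of $v\mapsto\sqrt{g(v,v)}$ on $TM$, which holds since $g$ is smooth on the Whitney sum. This gives $\vvvert\nabla f(p)\vvvert_p=\lim_j\vvvert\nabla f(p_{n_j})\vvvert_{p_{n_j}}=0$, and Proposition \ref{prop: critical point derivative} then shows $p$ is critical. I expect no real obstacle, since everything is inherited from the earlier results.
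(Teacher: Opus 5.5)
Your proposal follows the paper's argument exactly: Lemma~\ref{lemma: Strong Riem. Grad. Seq. cts.} makes $f$ a gaf with sequentially continuous gradient, so Assumption~\ref{assumption: gradient convergence} holds, and the corollary is Proposition~\ref{proposition: RGD gradient convergence} read in the strong setting (your telescoping recap and the identification $\|\cdot\|_p=\vvvert\cdot\vvvert_p$ are fine). One caveat, which applies equally to the paper: the proof of that lemma needs $p\mapsto df(p;\cdot)$ to be continuous into the normed dual, which Bastiani $C^1$ does not give (on $\ell^2$ one can build a $C^1$ function with $\nabla f(2^{-n}e_1)=e_n$ and $\nabla f(0)=0$); your accumulation-point step does not need it, however, since $|df(p_{n_j};v)|\le\vvvert\nabla f(p_{n_j})\vvvert_{p_{n_j}}\vvvert v\vvvert_{p_{n_j}}\to 0$ by Cauchy--Schwarz and continuity of $df$ and $g$ in a chart, which gives $D_pf=0$.
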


Thus, combined with Lemma \ref{lemma: Strong Riem. Grad. Seq. cts.}, this implies that on strong Riemannian $C^\infty$-manifolds, the Riemannian Hessian exists for every $C^2$-function and is moreover continuous.

Although many concepts from finite-dimensional Riemannian optimization extend in an essentially analogous way to strong Riemannian manifolds, this analogy breaks down at the level of second-order optimality conditions, since even on strong Riemannian manifolds positive definiteness does not imply a coercivity condition.

\section{Computation of the Riemannian gradient and the Riemannian Hessian}
\label{sec: computation}
In this chapter, we examine the computation of the Riemannian gradient and the Riemannian Hessian. We first establish the extension property of the Riemannian gradient and the Riemannian Hessian. We then compute these objects explicitly for concrete examples. Note first that the constructions are stable under restrictions to open subsets.
\begin{lemma}
\label{lemma: Extension Riemannian Gradient}
    Let $\big(E,\langle\cdot,\cdot\rangle \big)$ be a locally convex space with a continuous inner product. Consider any open subset $M \subseteq E$. Equipped with the induced metric $g$, $\big(M,g\big)$ is a weak Riemannian manifold. Let $f\colon M \to \mathbb{R}$ be a $C^1$-function and assume that $f$ extends to a gaf $\overline{f}\colon E \rightarrow \mathbb{R}$. Then $f$ is a gaf and $\operatorname{grad}f|_M = \nabla f$, and $\nabla f$ is sequentially continuous.
\end{lemma}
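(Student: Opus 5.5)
Since $M$ is open in $E$, it is a manifold with one global chart, the inclusion. Its tangent bundle is $TM = M\times E$, and the identification $T_pM\cong E$ is the identity. The induced metric is $g_p(u,v)=\langle u,v\rangle$ for all $p\in M$. Smoothness of $g$ follows from continuity and bilinearity of the inner product, and positive definiteness is inherited from $\langle\cdot,\cdot\rangle$. This gives the weak Riemannian manifold structure. The three claims are then proved in order: $f$ is a gaf, the gradients coincide, and the gradient is sequentially continuous.

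\textbf{Gradient.} Let $p\in M$ and $v\in E$. Since $M$ is open, $p+hv\in M$ for all small $h$, and $f=\overline{f}|_M$. The Bastiani directional derivatives therefore agree:
\[
D_pf(v)=df(p;v)=d\overline{f}(p;v).
\]
Because $\overline{f}$ is a gaf on $(E,\langle\cdot,\cdot\rangle)$, we have $d\overline{f}(p;v)=\langle \nabla\overline{f}(p),v\rangle = g_p(\nabla\overline{f}(p),v)$ for all $v\in T_pM=E$. Hence $\nabla\overline{f}(p)$ satisfies the defining identity of Definition \ref{def:riem-grad} at every $p\in M$, so $f$ is a gaf. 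The gradient is unique by nondegeneracy of $g_p$, as in Proposition \ref{prop: critical point derivative}. This gives $\nabla f=\nabla\overline{f}|_M$, which is how I read the statement's $\operatorname{grad} f|_M=\nabla f$.

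\textbf{Sequential continuity.} This step is the main obstacle. Nondegeneracy only gives weak-type information: for $p_n\to p$ in $M$ and every fixed $v$,
\[
\langle\nabla f(p_n),v\rangle = df(p_n;v)\to df(p;v)=\langle\nabla f(p),v\rangle,
\]
by continuity of $df$. This alone does not give convergence in the locally convex topology of $E$, and Example \ref{ex:5.10} shows that Assumption \ref{assumption: gradient convergence} can genuinely fail. The plan is to reduce to Corollary \ref{corollary: Riemannian gradient sequentially cts}. That requires showing $(\nabla f(p_n))_n$ converges in $TM=M\times E$, i.e.\ that $\nabla\overline{f}(p_n)$ converges in $E$.

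First I would check whether the paper's notion of gaf for the extension $\overline{f}$ on the whole space already includes that $\nabla\overline{f}$ is a (continuous) vector field. Definition \ref{def:riem-grad} does speak of ``a vector field $\nabla f$''. If so, $\nabla\overline{f}\colon E\to E$ is continuous, its restriction to the open set $M$ is continuous, and sequential continuity is immediate. This also makes the Corollary's hypothesis hold trivially, since the limit $\nabla\overline{f}(p)$ lies in $TM$. If instead only pointwise existence is available, the fallback is to state the result under the tacit reading that gradients are (continuous) vector fields. The weak convergence displayed above then identifies the limit, exactly as in the proof preceding Corollary \ref{corollary: Riemannian gradient sequentially cts}.
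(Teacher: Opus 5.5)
Your proposal is correct and matches the paper's approach: the paper's entire proof is the remark that the lemma ``follows immediately from untangling the identifications'', and that is your global-chart computation $df(p;v)=d\overline f(p;v)=\langle\nabla\overline f(p),v\rangle$, giving $\nabla f=\nabla\overline f|_M$, with any continuity of $\nabla f$ inherited from $\nabla\overline f$ by restriction. Your caveat on sequential continuity is justified, and you should commit to reading Definition \ref{def:riem-grad} as requiring $\nabla\overline f$ to be a genuine (continuous) vector field, because the paper gives no further argument and with merely pointwise existence the claim is false even for $E=\ell^2$ with its own inner product: for a bump $\beta$ supported in $(-\tfrac14,\tfrac14)$ with $\beta(0)=1$, the function $\overline f(x)=\sum_{n\ge 2}\beta\bigl(n^2(x_1-\tfrac1n)\bigr)\,n^{-3}\sin(n^3x_n)$ is Bastiani $C^1$, yet $\nabla\overline f(e_1/k)=e_k\not\to 0=\nabla\overline f(0)$ although $e_1/k\to 0$.
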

The proof follows immediately from untangling the identifications and it extends to the Riemannian Hessian, i.e.:
\begin{lemma}
\label{lem: Extension Riemannian Hessian}
    In the setting of Lemma \ref{lemma: Extension Riemannian Gradient}, assume that $\big(E,\langle \cdot, \cdot\rangle \big)$ admits a spray-induced Levi-Civita connection $\nabla$. Then, the Riemannian Hessian of $f$ on $\big(M,\langle \cdot,\cdot \rangle \big)$ coincides with its ambient extension:
    \begin{equation*}
        \Hess f(p) = \Hess \overline{f}(p), \quad p\in M.
    \end{equation*}
\end{lemma}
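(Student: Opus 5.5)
The plan is to reduce everything to the covariant derivative along curves from Lemma \ref{lem: induced covariant derivative} and then invoke uniqueness. Fix $p\in M$ and $v\in T_pM=E$. Since $M$ is open in $E$, the straight line $c(t)=p+tv$ lies in $M$ for $|t|$ small; it is a smooth curve in both $M$ and $E$ with $c(0)=p$, $c'(0)=v$. By \eqref{equation: covariant derivative Hessian},
\[
\Hess f(p)[v]=\Dt^{M}\nabla f(c(t))\big|_{t=0},\qquad \Hess \overline{f}(p)[v]=\Dt^{E}\nabla\overline{f}(c(t))\big|_{t=0}.
\]
Lemma \ref{lemma: Extension Riemannian Gradient} gives $\nabla f=\nabla\overline{f}|_M$, so both expressions differentiate the same vector field $t\mapsto\nabla\overline f(c(t))$ along $c$. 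It remains to check that the two covariant derivatives along $c$ agree.

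For this, restrict the spray $S_E$ to $TM=M\times E\subseteq TE$, which is open in $TE$. This gives a spray $S_M$ on $M$. Being metric is a pointwise identity involving $g$, $S_E$ and their derivatives (see \eqref{eq:metric_deriv}), and $g$ on $M$ is the restriction of the ambient metric, so $S_M$ is again a metric spray. Hence $(M,g)$ is a Hesse manifold, and its connection $\nabla^M$ is given in the global chart by the restriction of the same local formula (cf. \eqref{form:connection}). Thus $\nabla^M_XY=\nabla^E_{\tilde X}\tilde Y$ on $M$ whenever $\tilde X,\tilde Y$ are fields on $E$ agreeing with $X,Y$ near a point. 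Connections induced by sprays are of first order (Remark \ref{rem:fo-connection}), so the choice of extension does not matter.

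Next, define an operator on $\mathcal V(c)$ by $Z\mapsto \Dt^{E}Z$, where $c$ is regarded as a curve in $E$; here $\mathcal V(c)$ is the same set for $M$ and $E$. This operator satisfies properties (1)–(4) of Lemma \ref{lem: induced covariant derivative} for $M$. Linearity, the Leibniz rule and the product rule are inherited directly, since $g$ on $M$ is the restriction. For the chain rule: given $X\in\mathcal V(M)$, we have $\Dt^{E}(X\circ c)(t)=d(X\circ c)(t)-B(c(t),c'(t),X(c(t)))$ in the trivialization, and by the previous step this is exactly $\nabla^M_{c'(t)}X$. The uniqueness part of Lemma \ref{lem: induced covariant derivative} then forces $\Dt^{M}=\Dt^{E}$ on $\mathcal V(c)$, and evaluating at $t=0$ gives $\Hess f(p)[v]=\Hess\overline f(p)[v]$.

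The step I expect to be the main obstacle is the chain-rule verification. The intrinsic definition only allows vector fields defined on all of $M$, which are not restrictions of global fields on $E$. My remedy is to work in the single global chart, where both connections are given by the same local formula, and to use that the formula is local and of first order. If this becomes delicate, the fallback is a direct computation: $\Dt Z(t)=Z'(t)-B(c(t),c'(t),Z(t))$ holds in the trivial chart for both operators, which gives equality immediately.
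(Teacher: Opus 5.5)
Your argument is correct, but it is organized differently from the paper's. The paper argues in one line: the Levi-Civita connection of $(M,g)$ is the restriction of that of $E$, and $\nabla f=\nabla\overline f|_M$ by Lemma~\ref{lemma: Extension Riemannian Gradient}. Hence $\Hess f(p)[v]=\nabla_v\nabla f=\nabla_v\nabla\overline f=\Hess\overline f(p)[v]$ straight from the definition, with the restriction property and the locality of $\nabla_v$ left implicit. You instead differentiate along the line $c(t)=p+tv$ and appeal to the uniqueness part of Lemma~\ref{lem: induced covariant derivative}.

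The real gain of your version is that it justifies the restriction property, by restricting the metric spray to the open subset $TM\subseteq TE$. (The pointwise identity that is inherited there is \eqref{eq:mspray}, not \eqref{eq:metric_deriv}.) The uniqueness detour, however, buys nothing. As you noticed, for $X\in\mathcal{V}(M)$ you cannot use the chain rule of $\Dt^{E}$: a general locally convex space need not have smooth bump functions, so $X$ need not extend to $E$. You must therefore use the chart formula $\Dt^{E}Z=Z'-B(c,c',Z)$ for arbitrary $Z\in\mathcal{V}(c)$. Once that formula, or simply \eqref{cod:loc_spray}, is available for $M$ and $E$ in the same global chart, the claim follows at once. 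Your fallback is therefore the whole proof, and it is the paper's one-liner made explicit.

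It is simpler still. The induced metric $g_x=\langle\cdot,\cdot\rangle$ does not depend on $x$, so the right-hand side of \eqref{eq:mspray} vanishes. Nondegeneracy then forces $\Gamma=B=0$, so $\nabla_vY(p)=dY(p;v)$, exactly as the paper uses in its second numerical example. Hence $\Hess f(p)[v]=d(\nabla f)(p;v)=d(\nabla\overline f)(p;v)=\Hess\overline f(p)[v]$, because the two gradient fields agree on the open neighbourhood $M$ of $p$.
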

\begin{proof}
    Since the Levi-Civita connection on $\big(M,\langle \cdot,\cdot\rangle\big)$ is the restriction of that on $\big (E,\langle\cdot,\cdot\rangle\big)$, the definition of the Riemannian Hessian yields
    \begin{equation*}
        \Hess f(p)[v] = \nabla_v \nabla f = \nabla_v \grad \overline{f} = \Hess \overline{f}(p)[v],
    \end{equation*}
    for all $p\in M$ and $v \in T_pM$.
\end{proof}
\begin{remark}
    Observe that, since the Riemannian gradient $\nabla f$ is continuous in this setting, so is the Riemannian Hessian $\Hess f(p)$, owing to the continuity of the Levi-Civita connection. 
\end{remark}
These results transfer to open subsets of weak Riemannian manifolds, modulo the respective continuity arguments for the Riemannian gradient and Hessian. 
\begin{lemma}
    Let $(M,g)$ be a weak Riemannian $C^1$-manifold and $U\subset M$ be an open subset. Restricting the metric $g$ to $U$ yields a weak Riemannian manifold $(U,g)$. Let $f\colon U\to \mathbb{R}$ be $C^1$ with a $C^1$-extension $\overline{f}\colon M\to \R$, such that $\overline{f}$ is a gaf. Then the Riemannian gradient on $U$ coincides with that of the extension:
    \begin{equation*}
        \nabla f(p) = \nabla \overline{f}(p),\quad \forall p\in U.
    \end{equation*}
    Moreover, if $(M,g)$ is a Hesse manifold, so is $(U,g)$, and 
    \begin{equation*}
        \Hess f(p) = \Hess \overline{f} (p), \quad \forall p \in U.
    \end{equation*}
\end{lemma}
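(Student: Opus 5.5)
The proof splits into two parts. First we identify the gradients, and then, in the Hesse case, the Hessians. Throughout we use that the inclusion $\iota\colon U\hookrightarrow M$ is an open embedding. Hence $T_pU = T_pM$ for every $p\in U$: tangent vectors are equivalence classes of curves, and any curve in $M$ through $p$ stays in $U$ for small times. The restricted metric on $U$ is, by construction, $g|_{TU\oplus TU}$. Smoothness and the nondegeneracy of each $g_p$ are preserved because these are pointwise or local properties. So $(U,g)$ is a weak Riemannian manifold.

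\textbf{Gradient.} Since $f=\overline f|_U$ and $U$ is open, the directional derivatives agree: $D_pf(v)=D_p\overline f(v)$ for all $p\in U$ and $v\in T_pU=T_pM$. Here we use that the tangent map is computed from curves $f\circ\gamma=\overline f\circ\gamma$ with $\gamma$ in $U$. The vector field $\nabla\overline f|_U$ therefore satisfies
\[
D_pf(v)=g_p(\nabla\overline f(p),v)\quad\text{for all } v\in T_pU.
\]
By nondegeneracy of $g_p$, the representing vector is unique. Hence $f$ is a gaf and $\nabla f=\nabla\overline f|_U$. Since $U$ is open, smoothness and continuity of this field on $U$ are inherited from $\nabla\overline f$.

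\textbf{Hessian.} Let $S_g\colon TM\to T(TM)$ be a metric spray on $M$. Since $TU$ is open in $TM$ and $T(TU)$ is open in $T(TM)$, the restriction $S_g|_{TU}$ is a second-order vector field on $TU$. It is quadratic in the fibres, because the homogeneity condition is fibrewise. It is metric for $g|_U$, because compatibility with the metric is a local identity in charts. Concretely, in a chart of $M$ contained in $U$ the local representatives of $S_g$ and $g$ are literally the same, so the local formula \eqref{eq:mspray} holds verbatim. Thus $(U,g)$ is a Hesse manifold. The spray-induced connection on $U$ is computed in charts by the local formula $\nabla_\xi\sigma(x)=d\sigma(x;\xi(x))-B_U(x,\xi(x),\sigma(x))$, in which both terms are local. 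It is therefore the restriction of the connection on $M$: for $X,Y\in\mathcal V(M)$ we have $(\nabla_XY)|_U=\nabla^U_{X|_U}Y|_U$.

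Since both connections are of first order (Remark~\ref{rem:fo-connection}), the value $\nabla_v Y$ for $v\in T_pM$ depends only on the $1$-jet of $Y$ at $p$. Hence
\[
\operatorname{Hess} f(p)[v]=\nabla^U_v(\nabla f)=\nabla_v(\nabla\overline f)=\operatorname{Hess}\overline f(p)[v],
\]
because $\nabla f$ and $\nabla\overline f$ agree on the open set $U\ni p$. Alternatively, by \eqref{equation: covariant derivative Hessian} both sides equal $\frac{\mathrm D}{\mathrm dt}\nabla\overline f(c(t))|_{t=0}$ for a curve $c$ in $U$. By the uniqueness in Lemma~\ref{lem: induced covariant derivative}, the covariant derivative along $c$ is the same whether we view $c$ in $U$ or in $M$.

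\textbf{Main obstacle.} The only genuinely delicate step is locality of the connection. On infinite-dimensional manifolds one cannot use bump-function arguments freely, since smooth bump functions may not exist on the model spaces. This is why we argue through the chart formula of the spray-induced connection, or through the uniqueness of $\frac{\mathrm D}{\mathrm dt}$ along curves, rather than through the abstract connection axioms.
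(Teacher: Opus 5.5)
Your proposal is correct and follows the route the paper indicates. The paper gives no separate proof and only refers back to the open-subset-of-$E$ lemmas: identify $T_pU=T_pM$, use nondegeneracy of $g_p$ to see that $\nabla\overline f|_U$ is the gradient of $f$, and note that the spray-induced Levi-Civita connection on $U$ is the restriction of the one on $M$, so the Hessians agree by definition. You also supply details the paper leaves implicit: that the metric spray restricts to $TU$ (so $(U,g)$ is Hesse), and that locality of the connection comes from the chart formula \eqref{cod:loc_spray} rather than from bump functions.
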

In the following, we present two illustrative examples of weak Riemannian manifolds. For each example, we derive the corresponding Riemannian gradient, and for the second example, we additionally compute the Riemannian Hessian.
\begin{example}
\label{example: Riemannian Gradient on Imm}
 We revisit the manifold $\text{Imm}(\SSS^1, \R^2)$ discussed in Example \ref{ex:5.10}, however we endow it this time with the weak Riemannian manifold structure of the invariant $L^2$-metric (cf. Example \ref{ex:Sobolev} and Example \ref{ex:basic})
\[g_{inv ,c} (u,w)=\int_{\SSS^1} \langle u,w\rangle \lVert\dot{c}\rVert\, d\mu \qquad c\in \text{Imm}(\SSS^1,\R^2),\]
where we used the identification $T_c \text{Imm} (\SSS^1,\R^2)\cong C^\infty (\SSS^1,\R^2)$ and the inner product is the Euclidean inner product of $\R^2$.
We consider again the length function from Example \ref{ex:5.10}
\[\mathcal{L} \colon \text{Imm}(\SSS^1,\R^2) \rightarrow \R, \mathcal{L}(c):=\int_{\SSS^1} |\dot{c}|\, d\mu.\]
As in \cite{SWW23}, an easy computation shows that the derivative of the length function is 
\begin{align}\label{eq:diff_length_2}
d\mathcal{L}(c;u) = \int_{\SSS^1} -k_c \langle N_c, u\rangle |\dot{c}|\, d_\mu = g_{inv, c} (-k_c N_c, u),\end{align}
where $N_c (z)= (-y_z (z),x_z(z))^\top$ is the normal vector to the curve $c(z)=(x(z),y(z))$ and $k_c$ is the signed curvature scalar at $c$. Thus 
\begin{equation*}
    \nabla \mathcal{L}(c) = -k_cN_c \in C^\infty (\SSS^2,\R^2).
\end{equation*}
\end{example}

The following example showcases a classical application of the Hessian of an energy function which was originally considered to study geodesic loops in Riemannian manifolds, see e.g. \cite{Kli95}.

\begin{example}\label{ex:Eli_curves}
Let $M$ be a strong Riemannian manifold and denote by $H^1(\mathbb{S}^1,M)$ the space of all Sobolev $H^1$-loops with values in $M$, cf. \cite[Section 2.3 and 2.4]{Kli95} for the construction and more information on these manifolds.
In \cite{Eli74} the energy function 
\[E \colon H^1 (\mathbb{S},M) \rightarrow \R , E(x)=\frac{1}{2}\int_0^1 \lVert \partial x (s)\rVert^2 \, ds=\frac{1}{2}\lVert \partial x \rVert_{L^2}^2\]
is defined, where $\partial x$ is the $L^2$-tangent field induced by the loop $x$. The energy function is of interest as its critical points are geodesics. 
The gradient of $E$ with respect to the Sobolev $H^1$-metric (constructed as in Example \ref{ex:Sobolev} but with respect to the (noninvariant) $L^2$-metric from Example \ref{ex:basic}) are computed in \cite{Eli74} as follows:
\begin{align*}
\nabla E (x) &= -(1-\Delta)^{-1} \nabla \partial x
\end{align*}
where the $\nabla$ on the right is the covariant derivative induced by the metric on $M$, $\Delta$ is the Laplace-Beltrami Operator (mapping $H^1$-loops to $H^{-1}$-loops) and one exploits that $(1-\Delta)$ is a compact invertible operator. Then the Hessian at $\xi_x \in T_x H^1 (\mathbb{S},M)$ is given by
\begin{align*}
\Hess E (\xi_x) &= \xi_x  +(1-\Delta)^{-1}(R(\partial x, \xi_x)(1-\Delta)^{-1}(\partial x)-\nabla\left(R(\partial x, \xi_x)\nabla E(x)\right)-\xi_x)
\end{align*}
where $R$ is the curvature tensor of $M$.
As remarked in \cite[p. 114]{Eli74}, the Hessian is the identity plus a compact operator and at a critical point, the null-space of the Hessian consists of all closed Jacobi fields along the critical point (which is an $M$-valued loop!). Note that the tangent field $\partial x$ is such a critical point and this corresponds to the fact that there is a whole circle of critical points in $H^1(\mathbb{S},M)$ obtained by rotating the geodesic $x$.

While the structure of critical points is more complicated than in the finite dimensional matrix case (critical points piling up), the Hessian can nevertheless be used to study convergence of gradients towards the critical point, see e.g. \cite[Theorem B]{Eli74}.
\end{example}
\section{Numerical Experiments}
\label{sec: num exp}
In this chapter, we apply the developed optimization methods to specific examples. Note that the numerical methods employed here are using symbolic computations to avoid discretizations of the infinite-dimensional manifolds. As pointed out, we are not aware of results on discretizations of infinite-dimensional (weak) Riemannian manifolds for numerical or optimization purposes. Hence we postpone studying discretizations for later work. Employing first- and second-order optimality conditions, we locate critical points, ascertain their nature as extrema where applicable, and implement RGD. The examples satisfy all Assumptions of Proposition \ref{proposition: RGD gradient convergence} and therefore exhibit the anticipated convergence of $\vvvert \nabla f(c_k)\vvvert_{c_k}$ to zero and of the iterates to a minimizer.

%In contrast, the final example demonstrates the breakdown of these guarantees when any assumption fails. All computations are performed in the pre-shape space framework, i.e. on open subsets of $C^\infty(\SSS^1,\R^2)$ with a given weak Riemannian metric. 
\begin{example}
\label{ex:emb1}
We consider the locally convex space $C^{\infty}(\mathbb{S}^1,\mathbb{R}^2)$ endowed with the $L^2-$ metric $g(h,k) =\int_{\mathbb{S}^1}\langle h(\theta),k(\theta)\rangle d\theta$. Since $\text{Emb}(\mathbb{S}^1,\mathbb{R}^2)$ is an open subset of $C^{\infty}(\mathbb{S}^1,\mathbb{R}^2)$, the pair $\big(\text{Emb}(\mathbb{S}^1,\mathbb{R}^2),g\big)$ constitutes a weak Riemannian manifold.
We aim to minimize
\begin{equation*}
        f\colon \text{Emb}(\mathbb{S}^1,\mathbb{R}^2) \to \mathbb{R}, \quad c \mapsto \int_{\mathbb{S}^1}\|c(\theta)-\theta\|^2d\theta.
\end{equation*}
using the Riemannian gradient descent as introduced in Algorithm 1.

The function $f$ admits a smooth extension on $C^\infty(\SSS^1,\R^2)$ given by the same expression. A direct computation shows that the gradient of this extension is given pointwise by $\grad\overline{f}(c)(\theta) = 2(c(\theta)-\theta)$. By the extension result of Riemannian gradients ~\ref{lemma: Extension Riemannian Gradient}, the Riemannian gradient of $f$ on $\text{Emb}(\SSS^1,\R^2)$ is therefore $\nabla f(c) = 2(c-\text{Id}_{\mathbb{S}^1})$.
Consequently, a point $c\in \text{Emb}(\SSS^1,\R^2)$ is a critical point of $f$ if and only if $c = \text{id}_{\SSS^1}$. Since $f(c) \geq 0$ for all $c \in \text{Emb}(\SSS^1,\R^2)$ and $f(\text{id}_{\SSS^1})=0$, the identity embedding is the unique global minimizer of $f$.
To apply the Riemannian gradient descent, consider step sizes $\alpha_k > 0$ for $k\in \mathbb{N}$. Since the weak Riemannian manifold under consideration is an open subset of a locally convex space, the tangent space at any point $c$ is isomorphic to the space $C^\infty(\SSS^1,\R^2)$ itself. Therefore, we assume that for sufficiently small step sizes the iterates remain within this open subset, and consequently no retraction needs to be defined. For the resulting sequence of iterates $(c_k)_{k\in \mathbb{N}}$, a direct computation shows that 
\begin{equation*}
    f(c_k)-f(c_{k+1}) = \alpha_k(1-\alpha_k)\vvvert\nabla f(c_k)\vvvert_{c_k}^2,\quad \forall k\in \mathbb{N}.
\end{equation*}
Hence, if there exists a constant $\rho >0$ such that the step-sizes $\alpha_k$ satisfy $\rho \leq \alpha_k(1-\alpha_k)$ for all $k \in \mathbb{N}$, the sufficient decrease condition stated in Assumption \ref{assumption: iteration decrease} is fulfilled. In particular, for a constant step-size $0<\alpha<1$, this is satisfied for $\rho = \alpha(1-\alpha)$.

Since $f$ attains a global minimum and $\nabla f$ is sequentially continuous, all assumption of the general convergence result ~\ref{proposition: RGD gradient convergence} are fulfilled. Consequently, every accumulation point of the sequence of iterates $(c_k)_{k\in \mathbb{N}}$ is a critical point of $f$ and the gradient norms $\vvvert \nabla f(c_k)\vvvert_{c_k}$ converge to zero. Moreover, for every $K\geq 1$, there exists $k \in \{0,...,K-1\}$ such that 
\begin{equation*}
        \vvvert\nabla f(c_k)\vvvert_{c_k}\leq \sqrt{\frac{f(c_0)}{\rho}}\frac{1}{\sqrt{K}}.
\end{equation*}
We conclude with a numerical illustration of the above convergence behavior: \begin{figure}[H]
    \centering
    \includegraphics[width=0.9\linewidth]{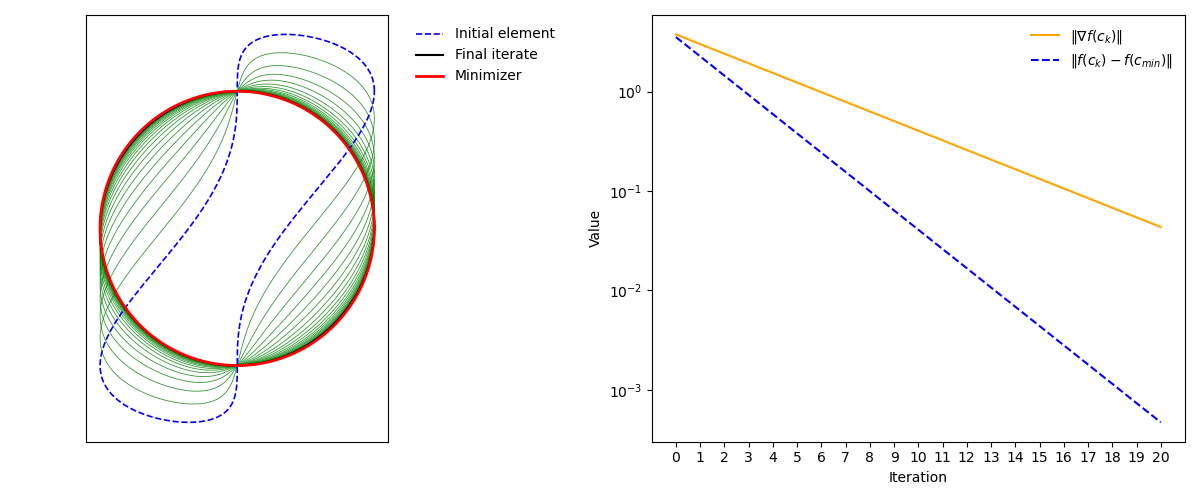}
    \caption{Riemannian gradient descent for $f$. Left: evolution of the iterates. Right: function values and gradient norms over twenty iterations.}
    \label{fig:exp1}
\end{figure}Figure~\ref{fig:exp1} shows twenty iterations of the Riemannian gradient descent with constant step-size $\alpha = 0.1$, starting from the initial embedding $c_0(x,y) = (x^3,x+y)$. The left panel depicts the evolution of the iterates, while the right panel displays the decrease of the function values and the norms of the Riemannian gradients, in agreement with the theoretical convergence results.

\end{example}

\begin{example}
    As in the Example \ref{ex:emb1}, we consider the weak Riemannian manifold $\big(\text{Emb}(\SSS^1,\R^2),g\big)$. Using the Riemannian gradient descent, we now aim to minimize the function
    \begin{equation*}
        f_g\colon \text{Emb}(\SSS^1,\R^2) \to \R, \quad c\mapsto \int_\SSS \|c(\theta)-g(\theta)\|^2d\theta + \lambda \int_\SSS c(\theta)^2d\theta 
    \end{equation*}
    for some $g \in C^\infty(\SSS^1,\R^2)$ and $\lambda\geq 0$.
    
    Proceeding as in the previous example, we obtain the following expression for the Riemannian gradient of $f_g$: 
    \begin{equation*}
        \nabla f_g (c) = 2\big( (1+\lambda)c-g\big) 
    \end{equation*}
    Thus, $f_g$ admits a unique critical point given by 
    \begin{equation*}
        c = \frac{g}{1+\lambda}.
    \end{equation*}
    In order to verify that this critical point is indeed a minimizer of $f_g$, we investigate the Riemannian Hessian. To this end, we first introduce a Levi-Civita connection on $\text{Emb}(\SSS^1,\R^2)$.
    We identify vector fields on $\text{Emb}(\SSS^1,\R^2)$ with mappings 
    \begin{equation*}
        X \colon \text{Emb}(\SSS^1,\R^2) \to C^\infty(\SSS^1,\R^2).
    \end{equation*}
    Following the construction of Schmeding in \cite[5.7]{Sch23}, which is based on the use of connectors, the Levi-Civita connection on $\text{Emb}(\SSS^1,\R^2)$ is defined as follows. 
    \begin{equation*}
        \big(\nabla_h Y\big)(c) = dY(c;h), \quad c\in \text{Emb}(\SSS^1,\R^2), Y \in \mathcal{V}(\text{Emb}(\SSS^1,\R^2)), h \in C^\infty(\SSS^1,\R^2).
    \end{equation*}
    Throughout, we suppress the notation associated with these identifications for simplicity. Consequently, the Riemannian Hessian of $f_g$  at $c\in \text{Emb}(\SSS^1,\R^2)$ is given by 
    \begin{equation*}
        \text{Hess}f(c)[h] = \big(\nabla_u \nabla f\big)(h) = d\nabla f(c;h) = 2(1+\lambda)h, \quad h \in C^\infty(\SSS^1,\R^2)
    \end{equation*}
    Thus, the Riemannian Hessian is positive definite for all $c \in \text{Emb}(\SSS^1,\R^2)$ provided that $\lambda > -1$. Moreover, $\Hess f(p)$ is coercive as $g_c(\Hess f(c)[h],h) = \frac{2}{1+\lambda}\vvvert h\vvvert^2_c$ for all $h\in C^\infty(\SSS^1,\R^2)$. Then, by ~\ref{proposition: Hesse minimizer}, the second-order critical point $c = \frac{g}{1+\lambda}$ is indeed a minimizer of $f_g$. 
    
    To apply the Riemannian gradient descent from Algorithm 1, let $(\alpha_k)_{k\in \mathbb{N}} \subset (0,\infty)$ denote a sequence of step-sizes. For sufficiently small step-sizes, we again assume that the iterates remain within the open set $\text{Emb}(\SSS^1,\R^2)$, which allows us to avoid defining a retraction. For the resulting sequence of iterates $(c_k)_{k\in \mathbb{N}}$, a straightforward computation yields 
    \begin{equation*}
        f(c_k) -f(c_{k+1}) = \alpha_k \big(1-(1+\lambda)\alpha_k\big)\vvvert\nabla f_g(c_k)\vvvert^2_{c_k} \quad \forall k\in \mathbb{N}.
    \end{equation*}
    Hence, the sufficient decrease Assumption \ref{assumption: iteration decrease} is satisfied provided that, for all step-sizes $\alpha_k$ there exists a constant $\rho >0$ such that $\rho \leq \alpha_k\big( 1-(1+\lambda)\alpha_k\big)$. For a constant step-size $0<\alpha< \frac{1}{1+\lambda}$, the choice $\rho = \alpha\big(1-(1+\lambda)\alpha\big)$ satisfies this condition.
    
    As $f_g$ admits a global minimizer and the Riemannian gradient $\nabla f_g$ is sequentially continuous, the decrease of the Riemannian gradient norm stated in \ref{proposition: RGD gradient convergence} follows. Furthermore, all accumulation points of the resulting iterative sequence are critical points and for every $K\geq 1$, there exists an index $k\in \{0,...K-1\}$ such that
    \begin{equation*}
        \vvvert\nabla f_g(c_k)\vvvert_{c_k} \leq \sqrt{\frac{f(c_0)}{\rho}}\frac{1}{\sqrt{K}}.
    \end{equation*}
    Consider the smooth map 
    \begin{equation*}
        g\colon \SSS^1 \to \R^2,\quad (x,y) \mapsto \big(x,\frac{3}{2}y\big)
    \end{equation*}
    and the smooth embedding chosen as the initial iterate, 
    \begin{equation*}
        c_0\colon \SSS^1 \to \R^2,\quad (x,y)\mapsto (x^3,x+y).
    \end{equation*}
    
    Figure~\ref{fig:exp2} illustrates the behavior of the Riemannian gradient descent with constant step-size $\alpha = 0.04$ and parameter $\lambda = 0.7$. The left panel shows the evolution of the iterates $c_k$ under the Riemannian gradient descent. The right panel depicts the decrease of the function value $f_g(c_k) -f_g(c_{min})$ in norm, together with the norm of the Riemannian gradient $\|\nabla f_g(c_k)\|_{c_k}$, over twenty iterations.
 
    \begin{figure}[H]
        \centering
        \includegraphics[width=1.0\linewidth]{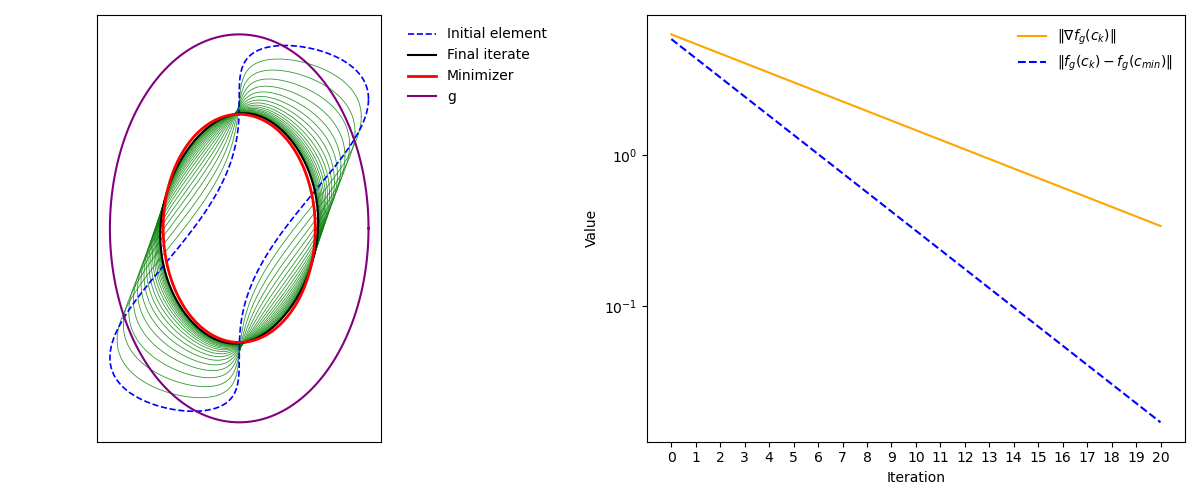}
        \caption{Riemannian gradient descent for $f_g$. Left: evolution of the iterates. Right: function values and gradient norms over twenty iterations.}
        \label{fig:exp2}
    \end{figure}
\end{example}

\appendix

\section{Sprays, connections and metrics}\label{App:diffgeo}

In this section we recall some standard material. For Banach manifolds this can be found e.g. in \cite{Lang01,KaS25}. First we need the following for a tangent bundle $TM$ of a smooth manifold: For every $\lambda \in \mathbb{R}$ we let $h_\lambda \colon TM \rightarrow TM$ be the vector bundle morphism which in every fiber $T_xM$ is given by multiplication with $\lambda$.

\begin{definition}
Let $M$ be a smooth manifold. A \emph{spray} is a vector field $S\in \mathcal{V}(TM)$ on $TM$, i.e. a map $S\colon TM \rightarrow T(TM))$ such that $T\pi_{M} \circ S = \mathrm{id}_{TM}$ and  for all $\lambda \in \R$, we have 
$$S\circ h_\lambda = Dh_\lambda (\lambda S). $$
\end{definition}
In local coordinates $(U,\varphi)$ for $M$, a spray $S \colon TM \rightarrow T^2M$ can be expressed as $S_U(x,v)=(x,v,v,S_{U,2}(x,v))$, where $S_{U,2}(x,\lambda v) = \lambda^2 S_{U,2}(x,v)$. It is easy to see (cf. \cite[4.3]{Sch23}) that in every chart $(U,\varphi)$ to a spray there is an associated quadratic form and a bilinear form given by the formulae 
$$\Gamma_U (x,v):=\frac{1}{2}d_2^2 S_{U,2}(x,0;(v,v))=S_{U,2}(x,v)\qquad B_U(x,v,w)= \frac{1}{2}d_2^2 S_{U,2}(x,0;(v,w)).$$
Sprays provide the vector fields formalizing second order differential equations on manifolds.

\begin{definition}
Let $(M,g)$ be a weak Riemannian manifold. The spray $S$ is called \emph{metric spray} (or \emph{geodesic spray}) if locally in every chart domain $U$ the associated quadratic form $\Gamma_U$ 
satisfies for all $v,w \in T_x U$ the relation
\begin{equation}\label{eq:mspray}
g_U(x,\Gamma_U(x,v),w)=\frac{1}{2}d_1 g_U (x,v,v;w)-d_1g_U(x,v,w;v),
\end{equation}
where we view $g$ locally as a map of three variables and $d_1$ denotes the partial derivative with respect to the first component.
\end{definition}

On a strong Riemannian metric \eqref{eq:mspray} can be used to define the quadratic form $\Gamma_U$. 
Note that the spray is a coordinate base independent way to describe the quadratic object usually described as the metrics Christoffel symbols. There are examples (\cite[Example 4.22]{Sch23}) of weak Riemannian metrics without an associated metric spray. Unsurprisingly, metric sprays are stable under isometric isomorphism. We provide the proof here for the readers convenience as it showcases how sprays transform under diffeomorphisms.

\begin{lemma}
Let $F \colon (M,g) \rightarrow (N,h)$ be a Riemannian isometry between weak Riemannian manifolds. Then $(N,h)$ admits a metric spray if and only if $(M,h)$ admits one.
\end{lemma}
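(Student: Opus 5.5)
By symmetry (the inverse $F^{-1}$ of a Riemannian isometry is again one), it suffices to assume that $(N,h)$ admits a metric spray $S_h$ and to construct one on $(M,g)$. The natural candidate is the pullback
\[
S_g := T(TF)^{-1}\circ S_h\circ TF \colon TM\to T(TM),
\]
which uses that $TF\colon TM\to TN$ is a diffeomorphism with inverse $T(F^{-1})$. I would then check the two spray axioms and the metric condition \eqref{eq:mspray} in turn.

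\emph{Spray axioms.} Smoothness of $S_g$ is clear from the chain rule in Bastiani calculus. Naturality of the projection, $\pi_N\circ TF=F\circ\pi_M$, gives
\[
T\pi_M\circ S_g = T(F^{-1})\circ T\pi_N\circ S_h\circ TF = T(F^{-1})\circ TF = \mathrm{id}_{TM}.
\]
Since $TF$ is fibrewise linear, it commutes with the homotheties, $TF\circ h_\lambda^M = h_\lambda^N\circ TF$. Differentiating this and using $S_h\circ h^N_\lambda = Dh^N_\lambda(\lambda S_h)$ transports the homogeneity identity to $S_g$.

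\emph{Metric condition.} This is a local statement, so I would work in a chart $\varphi$ of $M$ around $x$ and use $\psi:=\varphi\circ F^{-1}$ as the corresponding chart of $N$ around $F(x)$. In these charts the local representative of $F$ is the identity. The isometry condition $F^\ast h=g$ then says that the local representatives agree, $g_U=h_{F(U)}$, as maps of three variables. Likewise $S_g$ and $S_h$ have identical local representatives, so $\Gamma^g_U=\Gamma^h_{F(U)}$. Identity \eqref{eq:mspray} for $h$ in the chart $\psi$ is therefore literally identity \eqref{eq:mspray} for $g$ in the chart $\varphi$.

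\emph{Main obstacle.} The only real issue is that the metric condition is required in \emph{every} chart, not just in the adapted charts above. I would handle this in one of two ways. The first option is to argue that \eqref{eq:mspray} is chart-independent once it holds in some atlas; this is the compatibility of spray quadratic forms under change of charts, as in \cite[VIII \S 4]{Lang01} and \cite[4.3]{Sch23}. The second option avoids the adapted charts altogether: compute the representative of $S_g$ in an arbitrary chart pair via the second-order transformation rule
\[
\Gamma^g(x,v)=(dF)^{-1}\bigl(\Gamma^h(Fx,dF\,v)-d^2F(x;v,v)\bigr),
\]
and then verify \eqref{eq:mspray} directly. This uses $g_U(x;v,w)=h(Fx;dFv,dFw)$, differentiated in $x$, where the $d^2F$ terms cancel exactly. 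The first route is shorter, so I would choose it and appeal to the cited chart-independence.
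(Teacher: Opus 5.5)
Your proposal is correct and follows the paper's proof. Both use the same pullback $S_g=T^2(F^{-1})\circ S_h\circ TF$, and both observe that in the adapted chart pair $(\varphi,\varphi\circ F^{-1})$ the local representatives of the metrics and of the sprays coincide, so \eqref{eq:mspray} transfers verbatim; you check the spray axioms by hand where the paper cites \cite[Lemma 3.9]{KaS25}.

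The obstacle you flag is not real, and no chart-independence result is needed. Your $\varphi$ was an arbitrary chart of $M$, and $\varphi\circ F^{-1}$ is a chart of $N$ in which \eqref{eq:mspray} holds by definition. So the adapted charts already exhaust the atlas of $M$, which is exactly the paper's remark that ``since $F$ is a diffeomorphism it suffices to compute in charts of this type''.
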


\begin{proof}
The situation is symmetric, whence it suffices to assume that $(N,h)$ admits the metric spray $S^h$. Observe that
$S^g:= T^2(F^{-1}) \circ S^h \circ TF$ is a spray, cf. \cite[Lemma 3.9]{KaS25}.

To check that $S^g$ is a metric spray, one simply has to observe that the relation \eqref{eq:mspray} for the quadratic form of $S$ directly yields the desired relation for the quadratic form of $S^g$ in suitable charts. For the readers convenience we spell this out explicitely: Fix a chart $(U,\varphi)$ of $N$ and obtain the the chart $(F^{-1}(U),\varphi \circ F)$ of $M$. Since $F$ is a diffeomorphism it suffices to compute in charts of this type that $S^g$ is the metric spray. Note that by construction as $S^g=T^2F^{-1}\circ S\circ TF$ the local representative 
\[T^2(\varphi\circ F)\circ S^g \circ T(\varphi\circ F)^{-1} = T^2\varphi^{-1} \circ S \circ T\varphi^{-1}\] of $S^g$ in the $\varphi\circ F$ chart coincides with the local representative of $S$ in the chart $\varphi$. We deduce that the quadratic forms $\Gamma_U$ for $S$ on $\varphi(U)$ and $\Gamma_U^g$ for $S^g$ on $\varphi(U)$ coincide. 

Now pick $x \in \varphi(U), v,w\in T_x\varphi(U)$ and since $F$ is a Riemannian isometry 
\begin{align*}
g_{F^{-1}(U)} (x,v,w)&= g_{(\varphi\circ F)^{-1}(x)} (T_x (\varphi \circ F)^{-1}(v),T_x (\varphi \circ F)^{-1}(w))  \\ &= g_{F^{-1}\varphi^{-1}(x)} (T_x (F^{-1}\varphi^{-1})(v),T_x F^{-1}\varphi^{-1}(w)) \\&=h_{\varphi^{-1}(x)}
(T_x \varphi^{-1}(v),T_x\varphi^{-1}(w)) = h_U(x,v,w).\end{align*} We compute locally in the pair of charts $(U,\varphi)$ and $(F^{-1}(U),\varphi\circ F)$ and since the local representatives of the metrics coincide and \eqref{eq:mspray} holds for $h_U$ and $\Gamma_U$, we deduce from the fact that the quadratic forms coincide that $Q^g_U$ satisfies \eqref{eq:mspray}.
\end{proof}

Every spray induces a covariant derivative (see e.g. \cite[Proposition 4.3.9]{Sch23}). 

\begin{definition}
Let $S\colon TM\rightarrow T(TM)$ be a spray, then there exists a unique covariant derivative $\nabla \colon \mathcal{V}(M) \times \mathcal{V}(M) \rightarrow  \mathcal{V}(M)$ such that in a chart $(\varphi ,U)$, the local formula
\begin{equation}\label{cod:loc_spray}
\nabla_U (u, Y) (x)= dY(x;u(x))-B_U (x,u(x),Y(x))
\end{equation}
holds.
We call $\nabla$ the covariant derivative associated to the spray $S$.
\end{definition}
A covariant derivative on a weak Riemannian manifold $(M,g)$  is called \emph{metric derivative} if it is compatible with $g$ in the sense that
\begin{align}\label{eq:metric_deriv}
X.g(Y,Z)=g(\nabla_X Y, Z) + g(Y,\nabla_X Z), \qquad X,Y,Z \in \mathcal{V}(M),\end{align}
where we use the shorthand $X.f := Df \circ X$. 
Note that a spray is the metric spray for a Riemannian metric if and only if the associated covariant derivative is a metric derivative.

The second order differential equations described by a spray are variants of geodesic equations. As for a Riemannian metric, if one can solve these differential equations, they give rise to an exponential map associated to the spray. We recall from \cite{Lang01}:

\begin{example}
If $M$ is a paracompact Banach manifold with a spray $S\colon TM \rightarrow T(TM)$, then  the spray exponential $\exp_S \colon TM\supseteq \Omega \rightarrow M$ is a normalized local addition on $M$.
\end{example}

\bibliographystyle{abbrv}
\bibliography{opt_wR}

\end{document}